\documentclass[12pt]{amsart}
\usepackage{graphics,wrapfig, graphicx, mathrsfs}
\usepackage{dsfont}
\usepackage{amsmath}
\usepackage{amssymb}
\usepackage{tcolorbox}
\newtheorem{lemma}{Lemma}[section]
\newtheorem{question}{Question}[section]
\newtheorem{theorem}{Theorem}[section]
\newtheorem{corollary}{Corollary}[section]

\newtheorem{proposition}{Proposition}[section]

\newtheorem{definition}{Definition}[section]
\newtheorem{remark}{Remark}[section]

\numberwithin{equation}{section} \numberwithin{theorem}{section}
\numberwithin{example}{section} \numberwithin{remark}{section}
\numberwithin{figure}{section} \numberwithin{algorithm}{section}

\usepackage[colorlinks]{hyperref}
\AtBeginDocument{
   \hypersetup{
    linkcolor=blue,
    citecolor=blue,
 }
}

\def\ep{\varepsilon}
\def\ba{\begin{array}}
\def\ea{\end{array}}
\def\bma{\left(\begin{matrix}}
\def\ema{\end{matrix}\right)}
\def\be{\begin{equation}}
\def\ee{\end{equation}}
\def\vv{{\bf v}}

\def\bH{{\bf H}}
\def\bnu{\boldsymbol{\nu}}

\begin{document}
\title[Flexibility and rigidity of 2D Euler system]{Flexibility and rigidity of steady states of the two-dimensional Euler equations in an infinite channel}

		\author{Yupei Huang}
		\address{Department of Mathematics\\
Imperial College London\\
Exhibition Road, South Kensington, London SW7 2AZ}
		\email{yhuang9@ic.ac.uk}

		\author{Chunjing Xie}
		\address{School of Mathematical Sciences, 
				Ministry of Education Key Laboratory of Scientific and Engineering Computing,
				and CMA-Shanghai, Shanghai Jiao Tong University, 800 Dongchuan Road, Shanghai, China}
		\email{cjxie@sjtu.edu.cn}
	
        \author{Chilin Zhang}
\address{School of Mathematical Sciences, Fudan University, Shanghai 200433, China}\email{zhangchilin@fudan.edu.cn}
	\begin{abstract}
     We study steady solutions to the two-dimensional incompressible Euler equations in an infinite channel, whose far-field limits are  uniformly non-stagnant shear flows. In the smooth category,for a broad class of prescribed far-field shear profiles, non-shear steady states exist via the construction of two-dimensional solutions of the semilinear elliptic equations of stream function by the  min--max method. In the analytic category, we establish a comparison principle for the analytic steady states and we show for a dense family of analytic uniformly non-stagnant shear profiles, every analytic steady state with the prescribed far field must itself be a shear flow.
In particular, there are  far-field shear profiles which exhibit flexibility in the smooth category but rigidity in the analytic category.
  Furthermore, the dense rigidity is sharp in the sense that there exists analytic shear profile which admits flexibility in the analytic category.
 
	\end{abstract}

	\keywords{Steady Euler equations, flexibility, rigidity, shear flows}
	\subjclass{35Q31, 35Q35, 35J61, 76B03}

	\maketitle
	
	\section{Introduction}
  The two-dimensional incompressible Euler equations were introduced by Leonhard Euler in 1757~\cite{euler1757principes}:
\begin{equation}\label{eqn:Euler}
\left\{
\begin{aligned}
&\partial_t \vv + \vv \cdot \nabla \vv+\nabla P = 0,\\
&\nabla\cdot \vv = 0.
\end{aligned}
\right.
\end{equation}
 The divergence-free vector field $\vv$ represents the fluid velocity and the scalar function $P$ denotes the pressure. These equations constitute a fundamental model in fluid mechanics, with applications ranging from the study of turbulence to aerodynamic flow. The global well-posedness theory for the two-dimensional incompressible Euler equations  goes back to classical works such as~\cite{holder1933unbeschrankte,wolibner1933theoreme,yudovich1963non,beale1984remarks}. 
 
On the other hand, understanding the long time behavior of the two-dimensional Euler flows remains a major open problem. Numerical and physical experiments reveal an amazing fact: generic solutions of two-dimensional incompressible Euler equation relax to much simpler state in the long time limit. It was  conjectured that this phenomenon is related to the lack of compactness of the orbits of the two-dimensional incompressible Euler equations \cite{sverak2011course,shnirelman2013long, Elgindi2026Dynamics}. 
  One mechanism leading to such a loss of compactness is phase mixing, which can produce inviscid damping\footnote{Throughout this paper, we denote inviscid damping as non-shear solutions of 2D Euler equation in the channel converge weakly to shear solutions as time goes to infinity.}; see, for example,~\cite{lin2011inviscid,bedrossian2015inviscid,WeiZhangZhao2018,zhao2024inviscid,NonlinearIonescu}. Near monotone shear flows, phase mixing transfers the non-shear component of the solution to increasingly high frequencies. Consequently, although the vorticity need not converge strongly, the associated velocity field  converge at large times to a shear flow. 
 
 In an infinite channel, solutions can lose compactness through a different mechanism: the background shear transports the non-shear component of the solution toward spatial infinity. In~\cite{GuoLuo2026}, it is shown that, in a perturbative regime near uniformly non-stagnant shear flows\footnote{ A uniformly non-stagnant shear flow is a steady state in $\mathbb{R}\times [-1,1]$ with the stream function in the form $\varphi(y)$ such that $\varphi^{'}(y)>0$, for all $y\in [-1,1]$}, this mechanism drives solutions to converge weakly to the background shear flow as time tends to infinity. By contrast, the companion work~\cite{GuoLuoQin2026} suggests that this phenomenon may fail when the background shear possesses stagnation points. Together, these results indicate that the structure of the far-field profile plays a decisive role in transporting non-shear information to spatial infinity. A natural first step toward understanding whether this relaxation mechanism persists beyond the perturbative regime is to determine whether the same far-field condition already rules out stationary non-shear structures. 
 
 It is natural to have the following question.
\begin{question}\label{question:1}
Must every steady state in a channel that converges in the far field to a uniformly non-stagnant shear flow necessarily be a shear flow?
\end{question}

The classification of steady states in a channel was first studied in~\cite{hamel2017shear}. Hamel and Nadirashvili proved that if the magnitude of the velocity is uniformly bounded away from zero, then the steady state must be a shear flow. Later, the case of steady states with stagnation points was considered in~\cite{DrivasNualart2026,LiLuShahgholianXie2026,GuiXieXu2026}. Under the additional assumption that the flow is laminar, the steady state was again shown to be necessarily a shear flow.

For two-dimensional steady Euler equations, after introducing the stream function $\psi$ satisfying $\vv =\nabla^\perp\psi$, one has
\[
\nabla^\perp\psi \cdot \nabla \Delta \psi=0.
\]
A key step in many previous works is to derive a semilinear elliptic equation for the associated stream function and then apply elliptic techniques, such as the moving-plane method, to establish symmetry. The property of admitting semilinear elliptic equation is closely connected to the rigidity of steady states; at the same time, it also provides an important tool for investigating their flexibility. For example, in~\cite{lin2011inviscid} and~\cite{coti2023stationary}, bifurcation analysis for the corresponding semilinear elliptic equations was used to construct non-shear steady states near Couette and Kolmogorov flows in a channel.

The effectiveness of this elliptic framework naturally raises the question of its generality: to what extent must the stream function of an Euler steady state satisfy a semilinear elliptic equation?
It was shown in~\cite{ElgindiHuangSaidXie_ClassificationSteadyEulerFlows_DMJ} that, in a simply connected domain supplemented with slip boundary condition, the stream function of any non-radial analytic steady state necessarily satisfies a semilinear elliptic equation. This rigidity appears to be specific to the analytic category. Indeed, it was proved in~\cite{elgindi2026flexibility} that there exists a broad class of smooth Morse steady states in simply connected domains whose stream functions do not satisfy any semilinear elliptic equation. One can also see ~\cite{gomez2021existence,enciso2024smooth} for other counterexamples where the stream function of the steady state fails to solve a semilinear elliptic equation.
A similar distinction arises for steady states converging in the far field to uniformly non-stagnant shear flows. In the analytic category, the stream function necessarily satisfies a semilinear elliptic equation determined by the prescribed far-field shear flow, whereas this conclusion need not hold in the smooth category. This difference in the semilinear elliptic equation structure ultimately leads to rigidity in the analytic category and flexibility in the smooth category for Question \ref{question:1}. 
\subsection{Main Result}
We consider the steady Euler flow in the channel $\Omega:=\mathbb{R}\times (-1,1)$ converging asymptotically to strictly monotonically increasing shear flow. 
In terms of the stream function $\psi$,  we consider the problem
\begin{equation}\label{eqn:Steady in channel}
    \begin{aligned}
          &\nabla^\perp\psi \cdot \nabla \Delta \psi=0,\quad \text{in}\,\, \Omega,\\&
          \psi(x,1)=\varphi(1), \quad  \psi(x,-1)=\varphi(-1),\\&    \lim_{|x|\rightarrow+\infty}\|\psi(x,\cdot)-\varphi(\cdot)\|_{C^2([-1,1])}=0,
    \end{aligned}
\end{equation}
where $\varphi\in C^\infty([-1,1])$ is a monotone increasing function.
In the following theorem, we show there are non-shear smooth solution for \eqref{eqn:Steady in channel} under mild conditions on $\psi$, while in the analytic class, there are only shear solutions for \eqref{eqn:Steady in channel} for generic monotonically increasing analytic shear flows.   
\begin{theorem}\label{MainTheorem}
For every function $\varphi\in C^{\infty}[-1,1]$ satisfying
    \begin{equation}\label{eq. additional assumption on smooth varphi}
        \varphi'(y)>0,\quad \varphi'''(y)>0,\quad\mbox{and }\varphi'(y)\varphi''''(y)<\varphi''(y)\varphi'''(y),\quad\mbox{for all }y\in[-1,1],
    \end{equation} there exists a non-shear smooth(smooth in $\overline{\Omega}$) solution to \eqref{eqn:Steady in channel}. 
    On the other hand, there exists a family functions $\varphi$, dense in the topology in $C^{\omega}([-1,1])\cap\{\varphi|\varphi^{'}>0\}$, such that every analytic solution (analytic in $\overline{\Omega}$) of \eqref{eqn:Steady in channel} is necessarily a shear flow.
\end{theorem}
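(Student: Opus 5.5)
The argument splits into a smooth part (flexibility) and an analytic part (rigidity).

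\textbf{Flexibility.} Since $\varphi'>0$, the far-field vorticity $\varphi''(y)$ is a function of the stream value. Set $F:=\varphi''\circ\varphi^{-1}$ on $[\varphi(-1),\varphi(1)]$, so every shear $\varphi$ solves $\Delta\psi=F(\psi)$. The conditions in \eqref{eq. additional assumption on smooth varphi} then read
\[
F'(\varphi)=\frac{\varphi'''}{\varphi'}>0,\qquad \frac{d}{dy}\Big(\frac{\varphi'''}{\varphi'}\Big)<0,
\]
that is, $F'>0$ and $F''<0$ (strictly concave, increasing nonlinearity).

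We will not look for a solution of $\Delta\psi=F(\psi)$ globally. A non-shear solution of that equation with these boundary data would be excluded by moving-plane/sliding arguments, as in Hamel--Nadirashvili, because $\psi$ is monotone in $y$ near infinity. Instead we build $\psi$ piecewise. Outside a compact set $|x|\le R$ we take $\psi=\varphi$. Inside, we take a solution of $\Delta\psi=G(\psi)$, where $G$ is a modified nonlinearity that agrees with $F$ near the boundary values and differs on a middle range of $\psi$-values.

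The fact that $\nabla^\perp\psi\cdot\nabla\Delta\psi=0$ only requires $\Delta\psi$ to be constant along each streamline suggests a cleaner construction: a solution whose streamlines in the middle region are closed curves (an eddy). Concretely, we use a min--max (mountain pass) argument for $\tfrac12|\nabla u|^2+\mathcal G(u)$, with $u=\psi-\varphi$ compactly supported, on a large box. Concavity and monotonicity of $F$ give the geometry needed for a nontrivial critical point localized in $x$. The key steps are:
\begin{enumerate}
\item Choose $G=F$ on the value set of $\varphi$ outside a small interval $I$, and make $G$ differ on $I$ so that a nontrivial compactly supported bump solution exists.
\item Obtain the solution by a min--max argument in a finite box $[-L,L]\times(-1,1)$.
\item Show decay in $x$ via exponential estimates from the linearization $-\Delta+F'(\varphi)$, which is coercive because $F'>0$.
\end{enumerate}
Smoothness up to $\partial\Omega$ follows from elliptic regularity.

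\textbf{Rigidity.} For analytic $\psi$, we first show $\Delta\psi=F(\psi)$ globally. The quantities $\Delta\psi$ and $\psi$ are functionally dependent, since their Jacobian vanishes. Near $|x|$ large, $\partial_y\psi>0$, so every level set reaches infinity and there $\Delta\psi=F(\psi)$. Analyticity then propagates the identity $\Delta\psi-F(\psi)=0$ from that open set to all of $\Omega$ by unique continuation, provided $F$ extends analytically to the range of $\psi$. By the maximum principle this range is $[\varphi(-1),\varphi(1)]$.

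It remains to choose a dense family of $\varphi$ for which $\Delta\psi=F(\psi)$ forces a shear flow. We take the family $\{\varphi:F'<\pi^2/4\ \text{on the range}\}$, perturbed as needed, or generic analytic profiles for which the linearized operator $-\Delta+F'(\varphi)$ has no bounded kernel. Then we run the sliding method: compare $\psi(x+h,y)$ with $\psi$, using the far-field convergence and monotonicity in $y$. This gives $\partial_x\psi\equiv0$.

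The density proof needs a comparison principle for analytic steady states. The case $F'>0$ (as in the smooth example) must be handled by a different argument: $\psi$ is compared with translates $\varphi(y+t)$, extended analytically in $y$, which is where analyticity of $\varphi$ enters. The dense family would be those $\varphi$ for which these comparison translates exist, arranged by small analytic perturbation.

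\textbf{Main obstacle.} The hardest step is the flexibility part: producing a genuine non-shear solution while retaining the exact far field $\varphi$. The assumptions $F'>0$ and $F''<0$ must be exploited in the min--max energy estimates. The rigidity step of extending $F$ analytically to the range and proving the comparison principle is the second main difficulty.
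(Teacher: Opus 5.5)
Your reformulation of the hypotheses as $F'>0$, $F''<0$ on $\mathcal{R}_\varphi$ is correct. However, both halves of the proposal have genuine gaps.

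\textbf{Flexibility.} The construction is internally inconsistent, for three reasons.
\begin{itemize}
\item Suppose $\psi=\varphi$ for $|x|\ge R$ and $\Delta\psi=G(\psi)$ for $|x|<R$, with $G\neq F$ on a middle interval $I\subset\mathcal{R}_\varphi$. Then the vorticity jumps across $x=\pm R$ at every height with $\varphi(y)\in I$.
\item A Dirichlet critical point on a box, extended by zero, has $\partial_x u\neq 0$ on $x=\pm L$ by Hopf's lemma. This is a vortex sheet, not a smooth steady state.
\item Even with $G=F$ on $\mathcal{R}_\varphi$, unique continuation for $\Delta u=c(x)u$ with $c\in L^\infty$ rules out a nonzero compactly supported solution. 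Your step 3 (exponential decay) also contradicts compact support.
\end{itemize}
The premise for this detour is false. Hamel--Nadirashvili needs $\inf|\nabla\psi|>0$. Moreover, Proposition~\ref{lem. analytic flexibility} exhibits a profile satisfying \eqref{eq. additional assumption on smooth varphi} (with $F(t)=2t-3t^2$) that admits a global non-shear solution of $\Delta\psi=F(\psi)$.

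The paper keeps $F$ on $\mathcal{R}_\varphi$, where the far field fixes it. It uses only the freedom to extend $F$ \emph{outside} $\mathcal{R}_\varphi$, with $-C\le F''\le -1/C$ on $\mathbb{R}$. Then $G(u,y)=\Phi(\varphi+u)-\Phi(\varphi)-\Phi'(\varphi)u$ satisfies $G''(0,y)=F'(\varphi)>0$ and $G'''\le -1/C$. This mountain-pass geometry is exactly where the hypotheses enter.

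The real obstacle, which you never face, is loss of compactness under $x$-translations on the infinite channel. The paper restricts the min--max to the cone of nonnegative, $x$-even, $|x|$-nonincreasing functions. These obey $u(x,y)\le \|u\|_{H^1}/\sqrt{4|x|}$, which gives strong convergence of Palais--Smale sequences. Such sequences are produced by deforming paths with the parabolic flow $\partial_t v=\Delta v-G'(v,y)$, which preserves the cone while the gradient flow does not. The resulting $u\ge 0$ decays but is not compactly supported.

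\textbf{Rigidity.} Your claim that the range of $\psi$ is $[\varphi(-1),\varphi(1)]$ by the maximum principle is false. If it were true, writing $\psi=\widetilde\varphi(y+h)$ gives $\Delta h+\mathbf{V}\cdot\nabla h=0$, which forces $\psi=\varphi$ for \emph{every} analytic $\varphi$. That contradicts Proposition~\ref{lem. analytic flexibility}.

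Controlling the range of $\psi$ against the range of the monotone analytic continuation $\widetilde\varphi$ is precisely the crux, and your proposal has no mechanism for it. A condition such as $F'<\pi^2/4$ on $\mathcal{R}_\varphi$ is not dense. Nondegeneracy of $-\Delta+F'(\varphi)$ yields at best uniqueness near $\varphi$. Neither says anything about solutions whose values leave $\mathcal{R}_\varphi$, where only the continuation of $F$ matters.

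What is missing is the paper's admissibility condition at the endpoints of the maximal monotone interval $(a,b)$ of $\widetilde\varphi$. It is combined with the global equation $\Delta\psi=F_\psi(\psi)$, where $F_\psi$ is analytic on the open range and only Puiseux at its extremes. That equation comes from the Elgindi--Huang--Said--Xie analysis of critical loops, and it replaces your unproved proviso that $F$ extends analytically.

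The argument then runs as follows:
\begin{itemize}
\item Via the first integral $(\widetilde\varphi')^2-2G_\psi(\widetilde\varphi)$, the paper shows: if $\psi$ exceeds $\widetilde\varphi(b)$, or attains it at a critical endpoint, then $\widetilde\varphi$ continues analytically past $b$ as an even function about $b$.
\item Admissibility excludes such an even continuation.
\item $\varphi+\epsilon y$ is admissible for small $\epsilon>0$, because the linear term breaks this evenness.
\item Quadratic $\varphi$ is treated directly.
\end{itemize}
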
 

There are a few remarks in order. 

\begin{remark}
    The dense rigidity is sharp in the sense that there does exist non-stagnant analytic shear profile to which there exists an analytic non-shear solution convergent at far fields, see Proposition \ref{lem. analytic flexibility}. 
\end{remark}

\begin{remark}\label{rmk. t+1-e^-t}
We emphasize that there exists an analytic shear flow without stagnation points for which every analytic steady state with the same far-field profile is necessarily a shear flow, whereas, in the smooth category, there exists a non-shear steady states with this prescribed far field. We give a concrete  example as follows. 
    Let
\[
\Phi_0(t)=\frac{t^2}{2}+t+e^{-t}-1,
\]
and $\varphi$ solve
\[
\varphi'(y)=\sqrt{2\Phi_0(\varphi(y))},
\qquad
\varphi(-1)=1.
\]
It is direct to verify that the $\varphi$ satisfy \eqref{eq. additional assumption on smooth varphi}.  Hence there exist a smooth non-shear steady state with far field $\varphi$. In the same time, based on the discussion on Section 4, any analytic solution to \eqref{eqn:Steady in channel} should necessarily satisfy $\Delta \varphi= \Phi_0'(\varphi)$ in $\Omega$. As $\Phi_0^{''}>0$, the maximum principle implies that $\varphi$ is the unique analytic solution to \eqref{eqn:Steady in channel}.
\end{remark}

\begin{remark}[Interpretation of the assumptions]
Since $\varphi'>0$, we may define a function $F_\varphi$ on
${\mathcal{R}}_\varphi:=\varphi([-1,1])$
by
\begin{equation}\label{ODE:varphi}
F_\varphi(\varphi(y))=\varphi''(y).
\end{equation}
Straightforward calculations give
\[
F_\varphi'(\varphi(y))
=
\frac{\varphi'''(y)}{\varphi'(y)}
\quad
\text{and}
\quad 
F_\varphi''(\varphi(y))
=
\frac{
\varphi'(y)\varphi''''(y)
-
\varphi''(y)\varphi'''(y)
}{
\bigl(\varphi'(y)\bigr)^3
}.
\]
Therefore, the assumption~\eqref{eq. additional assumption on smooth varphi}
implies that
\[
F_\varphi'>0
\qquad\text{and}\qquad
F_\varphi''<0
\quad\text{on }{\mathcal{R}}_\varphi.
\]
In particular, the energy functional associated with the semilinear
equation is differentiable near $\varphi$, and $\varphi$ is a local
minimizer for the energy functional in the desired topology.
\end{remark}
\subsection{Ideas of the proofs}

The flexibility and rigidity results arise from two different ways in which
the prescribed far-field profile determines the vorticity--stream-function
relation.

We first explain the key ideas to study the flexibility. Since $\varphi'>0$, the far-field
profile determines a nonlinearity $F_{\varphi}$ on the interval
${\mathcal{R}}_\varphi=\varphi([-1,1])$
through the relation \eqref{ODE:varphi}.
However, the prescribed far field contains no information about the values of
$F_{\varphi}$ outside ${\mathcal{R}}_{\varphi}$. In the smooth category, we may therefore
extend $F_{\varphi}$ beyond this interval without changing the far-field shear
flow. We choose an extension so that the corresponding semilinear
elliptic equation possesses a suitable variational structure.

As explained in Remark~1.2, the assumptions on $\varphi$ ensure that the
shear profile is a local minimizer of the associated energy functional. At
the same time, the extension of $F_{\varphi}$ is chosen so that there exists
a competitor with lower energy, separated from the shear profile by an energy
barrier. This is precisely the setting in which a min--max argument can be
applied.

More precisely, for the energy functional $I$ defined on a
function space $\overline{\mathcal{C}}\subset H_0^1(\Omega)$ has a local minimizer $\psi=0$, and let
$\psi=\mathfrak{e}\in\overline{\mathcal{C}}$ be a competitor lying beyond an energy barrier. We
consider the class of continuous paths
\begin{equation*}
\Upsilon
:=
\Big\{\gamma\in C([0,1],\overline{\mathcal{C}})
\Big|
\,\gamma(0)=0,\,\,
\gamma(1)=\mathfrak{e}\Big\}
\end{equation*}
and define the min--max level
\begin{equation*}
\sigma
:=
\inf_{\gamma\in\Upsilon}
\max_{s\in[0,1]}
I(\gamma(s)).
\end{equation*}
Under an appropriate compactness condition, this construction produces a
critical point of $I$ at the level $\sigma$ (See, for example, \cite{struwe1980infinitely,rabinowitz1986minimax}). In our setting, the
resulting critical point is distinct from the shear profile and gives a
non-shear solution of the associated semilinear elliptic equation for the stream function.

The main difficulty is that the channel is unbounded in the horizontal
direction. Consequently, the energy functional is invariant under horizontal
translations, and a min--max sequence may drift toward spatial infinity. The
essential compactness step is therefore to recenter the sequence by suitable
horizontal rearrangement and prove that the rearranged  sequence converges to a
nontrivial critical point. Furthermore, for a particular chosen analytic profile, all the above procedures still hold under the analytic extension. Hence for such particular shear profile, the flexibility also holds in the analytic setting (see Proposition \ref{lem. analytic flexibility}).

In the
analytic category, the prescribed far-field profile determines not only the
nonlinearity on ${\mathcal{R}}_{\varphi}$, but also the entire semilinear elliptic equation
satisfied by the stream function. Thus, the freedom to modify the
vorticity--stream-function relation outside the range of the far-field
profile disappears. For a dense family of analytic shear profiles, the
resulting equation satisfies a comparison principle, which forces every
analytic steady state with the prescribed far field to coincide with the
corresponding shear flow.

\subsection{Organization of the paper}
Section \ref{sec2}  devotes to the construction of non-shear flows in a channel when the flows have the same prescribed far-field asymptotic state, under the assumption on the existence of Palais-Smale sequence. Section \ref{Section3} focuses on the technical part to construct the Palais-Smale sequence via the heat flows. The dense rigidity of the flows is proved in Section \ref{Section4}. There are two appendices of the paper. Appendix \ref{AppendixA} establishes Sobolev inequalities for functions in the channel. Existence, uniqueness, and several properties of solutions to the heat equation are established in Appendix \ref{AppendixB}.  

\section{Flexibility of smooth steady states in the channel}\label{sec2}
For given $\varphi\in C^\infty ([-1,1])$ satisfying \eqref{eq. additional assumption on smooth varphi}, there exists 
 a function $F$ defined on ${\mathcal{R}}_\varphi=\varphi([-1,1])$ such that $\varphi''=F(\varphi)$.
In this section, we intend to find a non-shear steady state in the channel $\Omega=\mathbb{R}\times(-1,1)$ satisfying
\begin{equation}\label{eq. Euler equation PROBLEM with a single F}
    \left\{\begin{aligned}
        &\Delta\psi=F(\psi)\mbox{ in }\Omega,\\
    &\|\psi(x,\cdot)-\varphi(\cdot)\|_{C^2([-1,1])}\to 0\mbox{ as }|x|\to+\infty,\\
    &\psi(x,\pm1)\equiv\varphi(\pm1),
    \end{aligned}\right.
\end{equation}
where $F$ could be an extension of the one defined on ${\mathcal{R}}_\varphi$.  
In order to construct $\psi$ in the form \eqref{eq. construct psi from u} that satisfies $\Delta\psi=F(\psi)$,  it suffices to prove the existence of nontrivial solution for the equation
\begin{equation*}
    \Delta u=F(\varphi+u)-F(\varphi)=\Phi'(\varphi+u)-\Phi'(\varphi),
\end{equation*}
where $\Phi$ is an anti-derivative of $F$.

Since $\varphi$ depends only on $y$, one can define the following potential function that depends only on $u$ and $y$:
\begin{equation}\label{eq. G and Phi relation}
    G(u,y)=\Phi(\varphi+u)-\Phi(\varphi)-\Phi'(\varphi)\cdot u.
\end{equation}
Assume that $u\in H^{1}_{0}(\Omega)$ is a non-trivial (weak) solution to
\begin{equation}\label{eq. Euler-Lagrange}
    \Delta u=G'(u,y),
\end{equation}
where we always refer to $G'(u, y)$ and $G^{(k)}(u,y)$ as the first order partial derivative with respect to the variable $u$,
then it is easy to verify that 
\begin{equation}\label{eq. construct psi from u}
 \psi(x,y):=  u(x,y)+\varphi(y).
\end{equation} 
gives a (weak) nonshear solution to the problem \eqref{eq. Euler equation PROBLEM with a single F}.

In this paper, we focus on studying the existence of a non-trivial solution to \eqref{eq. Euler-Lagrange}, given that the potential $G(u,y): \bar{\Omega} \to\mathbb{R}$ is a $C^{3}$ function satisfying the following conditions.
\begin{itemize}
    \item[(G1)] \textbf{Convexity at $0$:} We require that
    \begin{equation*}
        G(0,y)=G'(0,y)=0\mbox{ for all }y\in[-1,1]\quad\mbox{and }\inf_{y\in[-1,1]}G''(0,y)>0.
    \end{equation*}
     In the rest of the paper, we use $G^{(k)}(u, y)$ to denote the $k$-th partial derivatives of $G$ with respect to the variable $u$.
    \item[(G2)] \textbf{Asymptotic behavior at $+\infty$:} $G(u,y)$ satisfies
    \begin{equation*}
\limsup_{u\to+\infty}\sup_{y\in[-1,1]}\frac{G(u,y)}{u^{3}}<0\quad\mbox{and }\liminf_{u\to+\infty}\inf_{y\in[-1,1]}\frac{G(u,y)}{u^{3}}>-\infty.
    \end{equation*}
   
    \item[(G3)] \textbf{Derivative bounds:} There exists a uniform constant $C_{0}>0$, such that for all $(u,y)\in\mathbb{R}\times[-1,1]$, one has
    \begin{equation*}
        |G'(u,y)|\leq C_{0}\cdot(|u|+|u|^{2}),\quad|G''(u,y)|\leq C_{0}\cdot(1+|u|),\quad\mbox{and }G'''(u,y)\leq-\frac{1}{C_{0}}.
    \end{equation*}
\end{itemize}

The following existence result plays a crucial role in proving the flexibility.
\begin{proposition}[Existence of a non-trivial weak solution]\label{prop. exist weak solution}
If the function $G(u,y)$ satisfies the assumptions (G1)-(G3), then there exists a non-zero solution $u\in H^{1}_{0}(\Omega)$ satisfying $\Delta u=G'(u,y)$ in the weak sense, i.e.,
\[
\int_\Omega \nabla u\cdot \nabla \xi +G'(u, y)\xi dx =0 \quad \text{for any }\xi \in H_0^1(\Omega).
\]
\end{proposition}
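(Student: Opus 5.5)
We will find the solution as a mountain-pass critical point of
\[
I(u)=\int_\Omega \tfrac12|\nabla u|^2+G(u,y)\,dx\,dy ,\qquad u\in H_0^1(\Omega).
\]
By (G3), $|G(u,y)|\le C(|u|^2+|u|^3)$, and the Sobolev embedding $H_0^1(\Omega)\hookrightarrow L^p(\Omega)$, $2\le p<\infty$ (Appendix A), shows that $I$ is $C^1$ on $H_0^1(\Omega)$. Its critical points are exactly the weak solutions in the statement.

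\textbf{Geometry.} By (G1) and the Poincar\'e inequality in the $y$-direction (the channel has width $2$), we have
\[
I(u)\ge c\|u\|_{H^1}^2-C\|u\|_{H^1}^3 .
\]
Hence $0$ is a strict local minimizer, and $I\ge\delta>0$ on a small sphere $\|u\|_{H^1}=\rho$. By (G2), $G(u,y)\le -cu^3$ for large $u>0$. Take a fixed nonnegative bump $w\in C_c^\infty(\Omega)$ and set $\mathfrak e=tw$. For $t$ large, $I(tw)\le Ct^2-ct^3<0$. The path class $\Upsilon$ is nonempty and the level satisfies $\sigma\ge\delta>0$. The cubic growth is only one-sided: $G$ may be large and positive for $u\to-\infty$. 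This is why the paper works on a restricted set $\overline{\mathcal C}$. I would expect that set to be something like a cone of functions whose negative part is controlled, or a set invariant under a truncated heat flow. Deforming along the heat flow $\partial_t u=\Delta u-G'(u,y)$ (Appendix B) keeps the class invariant and decreases $I$. The standard deformation argument then produces a Palais--Smale sequence $u_n\in\overline{\mathcal C}$ with $I(u_n)\to\sigma$ and $I'(u_n)\to0$ in $H^{-1}$. I take this Palais--Smale sequence as given, as in Section 3.

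\textbf{Boundedness of PS sequences.} Use the Ambrosetti--Rabinowitz combination
\[
3I(u_n)-\langle I'(u_n),u_n\rangle=\int\tfrac12|\nabla u_n|^2+\big(3G-G'u_n\big).
\]
The condition $G'''\le -1/C_0$ suggests that $3G-G'u$ is bounded below by $c u^2$ up to a controlled error. A Taylor expansion about $0$ with integral remainder gives $3G-uG'=\int_0^u(\dots)$, with a favorable sign from $G'''<0$ for $u>0$. For $u<0$, the restriction to $\overline{\mathcal C}$ supplies the needed control. This yields $\sup_n\|u_n\|_{H^1}<\infty$. I would verify the algebra via $\frac{d}{du}(3G-uG')=2G'-uG''$ and $\frac{d}{du}(2G'-uG'')=G''-uG'''$.

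\textbf{Compactness up to translation (main obstacle).} Because $I$ is invariant under $x$-translations, $u_n$ can drift to infinity or vanish. First exclude vanishing. Suppose $\sup_{k}\|u_n\|_{L^2([k,k+1]\times(-1,1))}\to0$. A Lions-type lemma on the strip, built from the unit-cell Sobolev inequalities of Appendix A, gives $u_n\to0$ in $L^3$. Then
\[
\langle I'(u_n),u_n\rangle=\int|\nabla u_n|^2+G'(u_n)u_n\to0 .
\]
Combined with $G'(u,y)u\ge c u^2-Cu^3$ from (G1), this forces $\|u_n\|_{H^1}\to0$ and hence $I(u_n)\to0$, contradicting $\sigma>0$. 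So there exist $x_n$ with $\|u_n\|_{L^2(\text{cell at }x_n)}\ge\eta>0$. Translate to $\tilde u_n=u_n(\cdot+x_n,\cdot)$, which is still a bounded PS sequence. Extract $\tilde u_n\rightharpoonup u$ weakly in $H^1$ and strongly in $L^p_{loc}$. The local strong convergence gives $u\ne0$. Passing to the limit in $\langle I'(\tilde u_n),\xi\rangle$ for $\xi\in C_c^\infty(\Omega)$ uses the growth $|G'|\le C(|u|+|u|^2)$ together with local $L^p$ convergence. Hence $u$ is a nonzero weak solution, and density extends the identity to all $\xi\in H_0^1(\Omega)$. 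I expect the nonvanishing step to be the main difficulty, since it needs a quantitative Lions lemma adapted to $\mathbb R\times(-1,1)$. The sign issue for $u<0$ in the boundedness step is the second delicate point.
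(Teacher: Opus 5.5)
The gap is the boundedness of the Palais--Smale sequence. Both your Lions-lemma nonvanishing argument and the extraction of a weak limit rest on it. The favorable sign you claim for $3G-uG'$ does not follow from (G1)--(G3). Taylor's formula gives
\[
3G-uG'=\tfrac12 G''(0,y)\,u^{2}+\int_{0}^{u}(u-s)\Bigl(\tfrac{u}{2}-\tfrac{3s}{2}\Bigr)G'''(s,y)\,ds .
\]
The weight $(u-s)(\frac u2-\frac{3s}{2})$ integrates to zero over $[0,u]$ and changes sign at $s=u/3$. So the cubic contributions cancel identically only when $G'''$ is independent of $u$, as for $u^{2}-u^{3}$. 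Otherwise the sign is governed by the monotonicity of $G'''$, not by its sign.

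Your own derivative check shows this. For $u>0$,
\[
G''-uG'''=G''(0,y)+u|G'''(u,y)|-\int_0^u|G'''(s,y)|\,ds,
\]
which tends to $-\infty$ whenever $|G'''|$ decreases. For instance, take any $C^3$ function $G$ with $G''(0)>0$ and $G'''(u)=-1-\frac{B}{1+u}$ for $u\ge0$. It satisfies (G1)--(G3), yet $3G-uG'=-\frac B2u^{2}\log u+O(u^{2})$ as $u\to+\infty$, which $\frac12\|\nabla u\|_{L^2}^2$ cannot absorb. Functions of this type arise in Part (i), where $G'''=F''(\varphi+u)$ equals $-1$ for large $u$ but may be more negative on the range of $\varphi$. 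So $\sup_n\|u_n\|_{H^1}<\infty$ is not established, even for nonnegative $u_n$.

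The correct multiplier is $2$, not $3$. For $u\ge0$ one has $2G-uG'=-\int_0^u s(u-s)G'''(s,y)\,ds\ge \frac{u^{3}}{6C_0}$. Hence $2I(u_n)-\langle\nabla I(u_n),u_n\rangle$ controls $\int u_n^{3}$. Since $G\ge -Cu^{3}$ on $u\ge0$, you also have $\frac12\|\nabla u_n\|_{L^2}^2\le I(u_n)+C\int u_n^{3}$, which closes the bound. This is Step~1 of Lemma~\ref{lem. Palais-Smale}.

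This argument genuinely needs $u_n\ge0$: for $u<0$ the same identity gives $2G-uG'\le-\frac{|u|^{3}}{6C_0}$. That, and not the harmless growth $G\sim+|u|^{3}$ at $-\infty$, is one reason for the restriction. The paper's $\overline{\mathcal C}$ is the $H^1$-closure of nonnegative, $x$-even, $x$-nonincreasing functions.

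Once boundedness is repaired, your translation/Lions argument is correct and genuinely differs from the paper. The paper never translates. It uses the decay $\sup_{|x|\ge L}u\le\|u\|_{H^1}/\sqrt{4L}$ on $\overline{\mathcal C}$ (Corollary~\ref{lem. L infinity uniform decay}) to get strong $H^1$ convergence and a critical point at level $\sigma$. Your route yields only a nonzero weak limit, which is all the statement requires, and it does not use the symmetry.

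Combined with replacing $G$ by $\frac12G''(0,y)u^{2}$ on $\{u<0\}$, your approach could run the standard mountain-pass theorem on all of $H^1_0(\Omega)$ and bypass the heat-flow construction. Nonzero critical points of the modified functional are nonnegative, as testing with $\min(u,0)$ shows, so they solve the original equation.
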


The proof of Proposition \ref{prop. exist weak solution} will be given at the end of this section based on the existence of Palais-Smale sequence. Now we use Proposition \ref{prop. exist weak solution} to prove the flexibility result in Theorem \ref{MainTheorem}.

First, we show that the nontrivial solutions of \eqref{eq. Euler-Lagrange} constructed in Proposition~\ref{prop. exist weak solution} have the following higher order regularity.

\begin{lemma}\label{lem. higher order regularity}
    Assume that $\varphi(y)$ is a classical solution to $\varphi''(y)=F(\varphi)$ for $y\in[-1,1]$. Let $\Phi$ be an anti-derivative of $F$, and define $G(u,y)$ as in \eqref{eq. G and Phi relation}. If $u\in H^{1}_{0}(\Omega)$ is a weak solution to \eqref{eq. Euler-Lagrange}, then $\psi=\varphi+u$ defined as in \eqref{eq. construct psi from u} is a classical solution to \eqref{eq. Euler equation PROBLEM with a single F}. Moreover, if $F$ is smooth or analytic, then $\psi$ belongs to $C^{\infty}(\Omega)$ or $C^{\omega}(\Omega)$, respectively.
\end{lemma}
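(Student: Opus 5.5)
The plan is to bootstrap regularity for the weak solution $u\in H^1_0(\Omega)$ of $\Delta u=G'(u,y)$, then translate the result back to $\psi=\varphi+u$ and check the three conditions in \eqref{eq. Euler equation PROBLEM with a single F}. I will work in the setting of Proposition \ref{prop. exist weak solution}, so (G3) gives $|G'(u,y)|\le C_0(|u|+|u|^2)$.

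\textbf{Step 1: local $L^\infty$ and H\"older bounds.} In two dimensions $H^1$ embeds into every $L^p$ on unit squares $Q_k=(k-1,k+1)\times(-1,1)$, uniformly in $k$ (this is the Sobolev inequality of Appendix \ref{AppendixA}). Hence $G'(u,y)\in L^p(Q_k)$ for all $p<\infty$, with bounds depending only on $\|u\|_{H^1(Q_k')}$ on a slightly larger box. Apply $W^{2,p}$ estimates for the Dirichlet problem in the box. These hold up to the flat boundaries $y=\pm1$, where $u=0$; the corners are harmless because we localize in $x$ with a cutoff. This gives $u\in W^{2,p}(Q_k)$ with $\|u\|_{W^{2,p}(Q_k)}\le C(\|u\|_{H^1(Q_k')})$ uniformly in $k$, hence $u\in C^{1,\alpha}$ uniformly.

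\textbf{Step 2: bootstrap to classical regularity.} Since $G$ is $C^3$ in $u$ and, through $\varphi$ and $\Phi$, $C^{1}$ or better in $y$, the right-hand side $G'(u,y)$ is $C^{0,\alpha}$. Schauder estimates then give $u\in C^{2,\alpha}(\overline{\Omega})$, locally uniformly in $k$. Consequently $\psi=\varphi+u$ is $C^2$ up to the boundary and satisfies $\Delta\psi=\varphi''+F(\varphi+u)-F(\varphi)=F(\psi)$ pointwise. The boundary values $\psi(x,\pm1)=\varphi(\pm1)$ follow from $u\in H^1_0$ together with continuity. If $F$ is smooth, iterate Schauder estimates to get $C^\infty$. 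If $F$ is analytic, the equation $\Delta\psi=F(\psi)$ is a semilinear elliptic equation with analytic nonlinearity, so classical results (Morrey, Friedman) give $\psi\in C^\omega(\Omega)$.

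\textbf{Step 3: far-field decay.} This is the main obstacle, because the convergence $\|\psi(x,\cdot)-\varphi\|_{C^2}\to0$ must come from $u\in H^1_0(\Omega)$ alone. First, $\|u\|_{H^1(Q_k')}\to0$ as $|k|\to\infty$, since the tails of an $H^1$ integral vanish. Second, the Step 1 bounds control the right-hand side superlinearly by $\|u\|_{H^1}$ and hence by $\|u\|_{L^p}$ on the box, via $|G'|\le C_0(|u|+|u|^2)$. Combining these, $\|u\|_{W^{2,p}(Q_k)}\to0$, then $\|u\|_{C^{1,\alpha}(Q_k)}\to0$, and finally, through Schauder again, $\|u\|_{C^{2,\alpha}(Q_k)}\to0$. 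The Schauder step needs care: the $C^\alpha$ norm of $G'(u,y)$ must be linear in $\|u\|_{C^{1}}$ for small $u$. This holds because $G'(0,y)=0$, which makes $G'(u,y)=u\int_0^1G''(su,y)\,ds$ with a bounded $C^\alpha$ factor. Taking $\sup_{|x|\ge R}$ then yields the required $C^2$ convergence uniformly in $y$.
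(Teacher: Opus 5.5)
Your proposal is correct and follows essentially the same route as the paper. Both arguments use local $W^{2,p}$ estimates up to the flat boundaries, then a Schauder bootstrap (Morrey's theorem for analytic $F$) for classical, smooth or analytic regularity, and finally the vanishing of tails on boxes $[L-2,L+2]\times[-1,1]$, fed back through the $W^{2,p}$ and Schauder estimates, to get $\|u\|_{C^{2,\alpha}}\to0$ in the far field. The differences are minor: the paper measures the tails by $\|u\|_{L^{2}}+\|G'(u,y)\|_{L^{2}}$ on the boxes rather than by the $H^1$ norm, and you make explicit a point the paper leaves implicit, namely that $G'(0,y)=0$ makes the H\"older norm of $G'(u,y)$ linear in $u$, so the Schauder step really gives decay.
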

\begin{proof}
    It follows from the assumptions (G1)-(G3) and Lemma~\ref{lem. Sobolev in a channel} that both $u$ and $G'(u,y)$ belong to $L^{2}(\Omega)$. Applying the standard $W^{2,p}$ theory yields that
    \begin{equation*}
        u\in W^{2,2}(\Omega\cap B_{R})\subseteq L^{\infty}(\Omega\cap B_{R}),\quad\mbox{for all }R>0.
    \end{equation*}
   If $F$ is $C^\infty$ smooth or analytic, then $G'(u,y)$ is $C^\infty$ smooth or analytic in $u$, respectively. It then follows from the Schauder estimate and Morrey theorem in \cite{Morrey1958Analyticity} that $u\in C^{\infty}_{loc}(\Omega)$ or $u\in C^{\omega}_{loc}(\Omega)$, respectively. In particular, $u$ satisfies \eqref{eq. Euler-Lagrange} in the classical sense.

    The straightforward computations show that $\psi=u+\varphi$ is a classical solution of the PDE
    \begin{equation*}
        \Delta\psi=F(\psi).
    \end{equation*}
    Furthermore, $\psi(x,y)$ is $C^\infty$ smooth or analytic and is not identical to $\varphi(y)$.
    
    Since $u\in H^{1}_{0}(\Omega)\cap C^{\infty}_{loc}(\Omega)$, one has $u(x,\pm1)\equiv0$ in the classical sense. Thus $\psi(x,\pm1)=\varphi(\pm1)$. As both $u$ and $G'(u,y)$ belong to $L^{2}(\Omega)$, one has
    \begin{equation*}
        \lim_{|L|\to\infty}\Big\{\|u\|_{L^{2}([L-2,L+2]\times[-1,1])}+\|G'(u,y)\|_{L^{2}([L-2,L+2]\times[-1,1])}\Big\}=0.
    \end{equation*}
    It follows from the $W^{2,p}$ estimate and the Schauder estimate that
    \begin{equation*}
        \lim_{L\to\infty}\|u\|_{C^{2,\alpha}(\Omega_{L}^{c})}=0,\quad\mbox{where }\Omega_{L}^{c}=\overline{\Omega}\setminus([-L,L]\times[-1,1]).
    \end{equation*}
    This implies
    \begin{equation*}
        \lim_{|x|\to+\infty}\|\psi(x,\cdot)-\varphi(\cdot)\|_{C^{2,\alpha}([-1,1])}=0.
    \end{equation*}
    Therefore, $\psi$ is a non-shear classical solution to \eqref{eq. Euler equation PROBLEM with a single F}.
\end{proof}
Now, by assuming Proposition~\ref{prop. exist weak solution}, we give the proof of the flexibility part of Theorem~\ref{MainTheorem}.

\begin{proof}[Proof of Part (i) of Theorem~\ref{MainTheorem}]
    For the given function $\varphi\in C^{\infty}([-1,1])$ that satisfies $\varphi'(y)>0$ in $[-1,1]$, it follows from the implicit function theorem that there exists a smooth function $F$ in the range of $\varphi$, such that $\varphi''(y)=F(\varphi)$ for $y\in[-1,1]$. It follows from the assumption \eqref{eq. additional assumption on smooth varphi} that
    \begin{equation*}
        F'(\varphi(y))>0\mbox{ and }F''(\varphi(y))<0\quad\mbox{for all }y\in[-1,1].
    \end{equation*}

    Now let us extend $F(t)$ from ${\mathcal{R}}_\varphi=[\varphi(-1),\varphi(1)]$ to $\mathbb{R}$, such that $ F\in C^{\infty}(\mathbb{R})$ satisfies  
   \begin{equation*}
        F''(s) =-1,\text{ $\forall s \in (\varphi(-1)-1,\varphi(1)+1)^{c}$}.
    \end{equation*}
and 
    \begin{equation*}
        -C\leq F^{''}(s)\leq \frac{-1}{C}<0,\text{ $\forall s \in \mathbb{R}$,}
    \end{equation*} 
where $C>0$ is a constant.   
    
    Let $\Phi\in C^{\infty}(\mathbb{R})$ be an anti-derivative of the smooth extension of $F$ mentioned above, { as $G''(0,y)=F'(\varphi)>0$ for $y\in[-1,1]$ and $G'''(u,y)=F''(\varphi+u)\in[-C,-\frac{1}{C}]$, it follows that $G(u,y)$ satisfies all assumptions (G1)-(G3). We can apply Proposition~\ref{prop. exist weak solution} to construct a nonzero solution $u\in H_0^1(\Omega)$ satisfying \eqref{eq. Euler-Lagrange}.}

    Applying Lemma~\ref{lem. higher order regularity} yields that $\psi=\varphi+u$ is a classical solution to \eqref{eq. Euler equation PROBLEM with a single F}. Moreover, one has $\psi\in C^{\infty}_{loc}(\Omega)$ as $F(t)\in C^{\infty}(\mathbb{R})$. This finishes the proof of the first part of Theorem~\ref{MainTheorem}.
\end{proof}
\begin{remark}
If $F(t)=\Phi'(t)=t+1-e^{-t}$ (the same as the one in Remark~\ref{rmk. t+1-e^-t}), then Figure~\ref{fig: draw to functions} gives a possible non-analytic extension of $F$ that is used in the proof of Part (i) of Theorem~\ref{MainTheorem}.
    \begin{figure}[h!]\label{fig: draw to functions}
 \caption{A non-analytic extension of $F$}
    \centering
\includegraphics[width=0.5\linewidth]{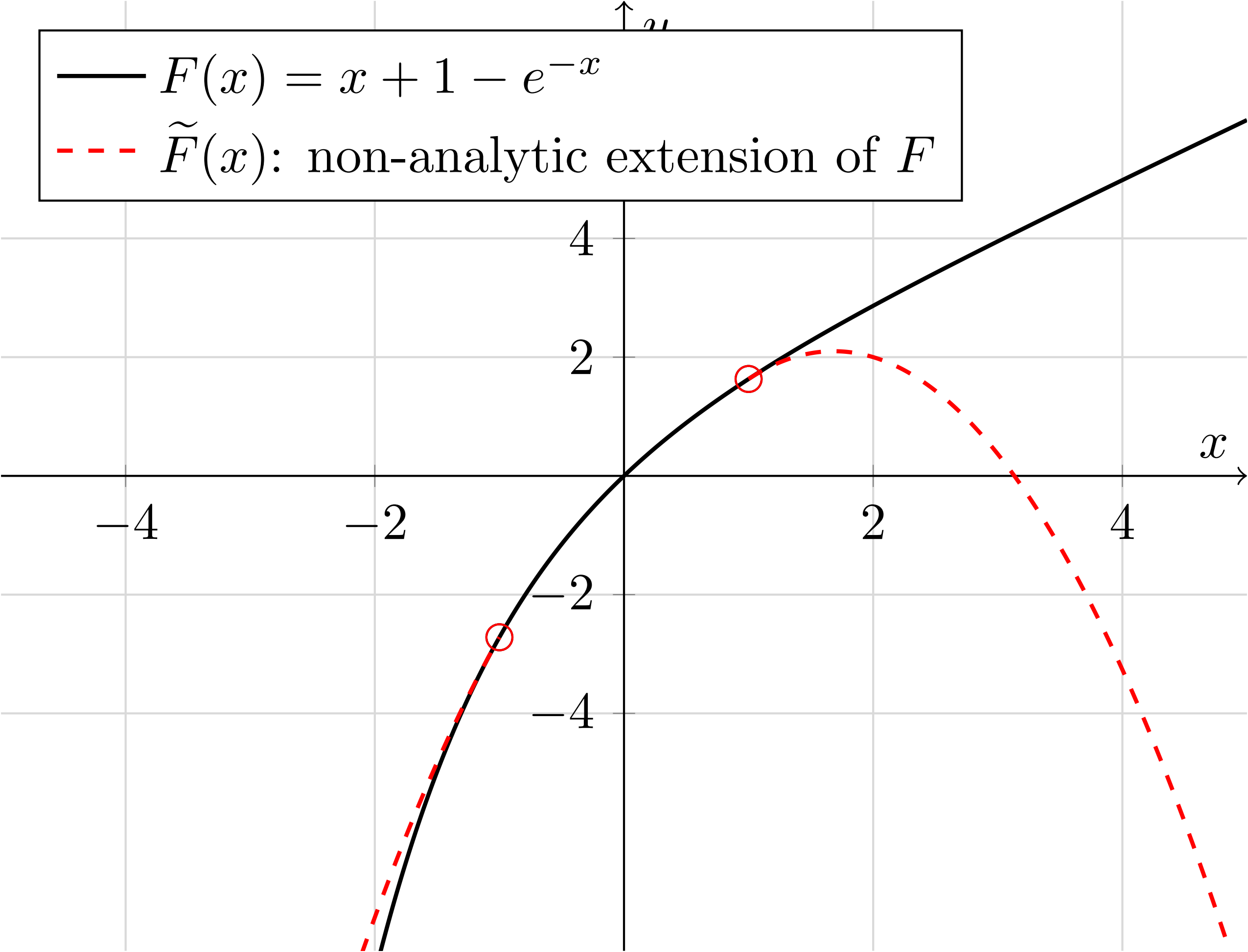}
\end{figure} 
\end{remark}

Similar to the above proof for flexibility part of Theorem~\ref{MainTheorem} , one can also prove the following analytic flexibility result.
\begin{proposition}[Analytic flexibility]\label{lem. analytic flexibility}
    There exists an analytic function $\varphi(y)$ satisfying $\varphi'>0$ in $[-1,1]$, such that the problem \eqref{eq. Euler equation PROBLEM with a single F} has a non-shear analytic solution $\psi$.
\end{proposition}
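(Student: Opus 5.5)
The plan is to run the proof of Part (i) of Theorem~\ref{MainTheorem} with a globally analytic nonlinearity $F$, so that the extension step is not needed. Fix an analytic $F$ on all of $\mathbb{R}$ that satisfies, on the whole line,
\begin{equation*}
F(0)=0,\qquad F'(0)>0,\qquad -C\le F''(s)\le -\tfrac1C .
\end{equation*}
The simplest candidate is the quadratic
\begin{equation*}
F(t)=t-\tfrac12 t^2 ,
\end{equation*}
for which $F''\equiv-1$. Then choose $\varphi$ as a solution of the ODE $\varphi''=F(\varphi)$ on $[-1,1]$ with $\varphi'>0$. For that I use the first integral
\begin{equation*}
\tfrac12(\varphi')^2=\Phi(\varphi)+c,\qquad \Phi(t)=\tfrac{t^2}{2}-\tfrac{t^3}{6}.
\end{equation*}
I pick $\varphi(-1)=a$ and take the constant $c>0$ large enough that $\Phi(t)+c>0$ for all $t$ in a compact interval containing $[a,a+M]$, where $M$ is a bound on the range forced by $\varphi'\le\sqrt{2(\max\Phi+c)}$. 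Then $\varphi'=\sqrt{2(\Phi(\varphi)+c)}>0$ on $[-1,1]$, and $\varphi$ is analytic because the right-hand side is analytic and positive there. Alternatively, I can solve the initial value problem $\varphi(-1)=a$, $\varphi'(-1)=b$ with $b$ large; the solution then stays monotone on an interval of length $2$ before $\varphi'$ could vanish. Either way, $\varphi$ is an analytic shear profile with $\varphi'>0$, and $F$ is analytic on $\mathbb{R}$.

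Next, set $\Phi$ to be the antiderivative above and define $G(u,y)$ by \eqref{eq. G and Phi relation}. For the quadratic $F$ this is explicit:
\begin{equation*}
G(u,y)=\tfrac12 F'(\varphi)\,u^2+\tfrac16 F''\,u^3=\tfrac12\bigl(1-\varphi(y)\bigr)u^2-\tfrac16 u^3 .
\end{equation*}
Now verify (G1)--(G3) directly.
\begin{itemize}
\item (G1) requires $G''(0,y)=F'(\varphi(y))=1-\varphi(y)>0$. This holds if the range of $\varphi$ lies in $(-\infty,1)$, so I impose $\varphi(1)<1$ in the choice above; for example $a$ very negative and $c$ tuned so that the range stays below $1$.
\item (G2) holds because $G/u^3\to-\tfrac16$ uniformly in $y$.
\item (G3) holds with $G'''\equiv-1$, $|G''|\le C(1+|u|)$ and $|G'|\le C(|u|+u^2)$, since $\varphi$ is bounded.
\end{itemize}
Proposition~\ref{prop. exist weak solution} then gives a nonzero $u\in H^1_0(\Omega)$ solving \eqref{eq. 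Euler-Lagrange}. Lemma~\ref{lem. higher order regularity} with analytic $F$ gives that $\psi=\varphi+u$ is analytic in $\Omega$, solves $\Delta\psi=F(\psi)$, has the right boundary values and far field, and is non-shear because $u\in H^1_0$ is nonzero.

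The main obstacle is making (G1) compatible with the global concavity, because a concave $F$ has $F'$ decreasing. I need $F'>0$ on all of $\mathcal{R}_\varphi$ while $\varphi$ is increasing with $\varphi'>0$ throughout. This amounts to fitting the range of an increasing solution of $\varphi''=\varphi-\tfrac12\varphi^2$ inside $\{t<1\}$ over a $y$-interval of length $2$. I would first try an explicit choice: take $\varphi$ near the point $t=0$, where $F(0)=0$ and $F'(0)=1$, and start with small positive slope, $\varphi(-1)=-\delta$, $\varphi'(-1)=\delta$. The linearization $\varphi''\approx\varphi$ keeps the solution close to $\delta\sinh$-type behaviour. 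Its range then stays in $(-2\delta,\,C\delta)\subset(-\infty,1)$ and $\varphi'\ge\delta\cosh(\cdot)\cdot(1-O(\delta))>0$, which settles the constraint for small $\delta$. If the paper additionally wants interior analyticity up to $\overline{\Omega}$, I would note that analyticity up to the flat boundary follows from the same Morrey regularity, since $u=0$ there.
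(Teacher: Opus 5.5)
Your proposal follows the paper's proof. You take a globally analytic concave quadratic $F$ with $F(0)=0<F'(0)$ and a small-amplitude monotone orbit of $\varphi''=F(\varphi)$ near the equilibrium $0$, so that $F'>0$ on $\mathcal{R}_\varphi$. You then check (G1)--(G3) directly for the explicit cubic $G$ and apply Proposition~\ref{prop. exist weak solution} and Lemma~\ref{lem. higher order regularity}. The paper uses $F(t)=2t-3t^{2}$ with the zero-energy orbit $\varphi'=\sqrt{2(\varphi^{2}-\varphi^{3})}$, $\varphi(-1)=\epsilon$, which keeps $0<\varphi<\tfrac16$. Your $F(t)=t-\tfrac12 t^{2}$ with data $\varphi(-1)=-\delta$, $\varphi'(-1)=\delta$ works equally well.

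Two corrections are needed in the write-up.

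First, delete the ``$c$ large'' and ``$\varphi'(-1)=b$ large'' options in your second paragraph, because they fail. For $\varphi''=\varphi-\tfrac12\varphi^{2}$ with energy constant $c\to\infty$ (which is what large $b$ gives for fixed $a$), the orbit passes $t=1$ and reaches the turning point $\Phi(\varphi)+c=0$ within $y$-time $O(c^{-1/6})$. So $\varphi'$ vanishes inside $[-1,1]$ and (G1) is lost.

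Second, in your small-$\delta$ construction the linearized solution is $-\delta e^{-(y+1)}$, not $\delta\sinh$- or $\delta\cosh$-type. Hence $\varphi'=\delta e^{-(y+1)}+O(\delta^{2})\ge\delta e^{-2}-C\delta^{2}$, rather than being bounded below by roughly $\delta\cosh(y+1)$. This still gives $\varphi'>0$ and a range inside $(-2\delta,0)$, where $F'=1-\varphi>1$. So with only the small-$\delta$ choice kept, your argument is complete.
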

\begin{proof}
 Let $\varphi:[-1,1]\to\mathbb{R}$ be the solution to the equation
\begin{equation}\label{eq. good epsilon and f}
    \left\{\begin{aligned}
        &\varphi'(y)=\sqrt{2(\varphi^{2}-\varphi^{3})},\quad\mbox{for }y\in(-1-\delta,1+\delta),\\
        &\varphi(-1)=\epsilon,
    \end{aligned}\right.
\end{equation}
where $\epsilon>0$ is a small constant.
 Since $0<\varphi'\leq\sqrt{2}\varphi$ for all $0<\varphi<1$, one has $\varphi(y)\leq e^{\sqrt{2}(y+1)}\cdot\epsilon$. Therefore, there exists a $\delta>0$ such that  
\begin{equation*}
    \varphi'(y)>0\mbox{ and }0<\varphi(y)<\frac{1}{6}\mbox{ in }[-1-\delta,1+\delta].
\end{equation*}
By differentiating \eqref{eq. good epsilon and f}, $\varphi(y)$ is a one-dimensional analytic solution to the semilinear elliptic equation $\varphi''=2\varphi-3\varphi^{2}$. 

Next, let $F(t)=2t-{3t^2}$ and $G(u,y)=(1-3\varphi)u^{2}-u^{3}$.
The straightforward computations show that $G$ satisfies (G1)-(G3). Applying Proposition~\ref{prop. exist weak solution} and Lemma~\ref{lem. higher order regularity} yields the existence of non-zero classical solution $u$ satisfying \eqref{eq. Euler-Lagrange}. Hence $\psi=\varphi+u$ is a classical solution to \eqref{eq. Euler equation PROBLEM with a single F} with $F(t)=2t-3t^2$.  Since $F(t)\in C^{\omega}(\mathbb{R})$, it follows from Lemma ~\ref{lem. higher order regularity} that $\psi\in C^{\omega}(\overline{\Omega})$. Hence the proof of Proposition~\ref{lem. analytic flexibility} is completed.
\end{proof}

In order to prove Proposition~\ref{prop. exist weak solution}, we use the variational method, more precisely the min-max method, to construct a saddle critical point of an energy function.

\subsection{Formulation of the variational problem}
Consider the energy functional
\begin{equation}\label{eq. I(u) definition}
    I(u)=\int_{\Omega}\Big\{\frac{|\nabla u|^{2}}{2}+G(u,y)\Big\}dxdy,\quad u\in H^{1}_{0}(\Omega).
\end{equation}
If $u\in H^{1}_{0}(\Omega)$ is a critical point of \eqref{eq. I(u) definition}, then it formally satisfies \eqref{eq. Euler-Lagrange}.

In this subsection, we derive the variational formula of \eqref{eq. I(u) definition} and rigorously establish the connection between the functional $I$ and the semilinear elliptic equation \eqref{eq. Euler-Lagrange}.

Let us first study some properties of the first variation of  $I(u)$.
\begin{lemma}\label{lem. derivative of I(u), i.e.: EL equation}
    Assume that the potential function $G(u,y)$ satisfies the assumptions (G1)-(G3). Then the functional $I(\cdot)$ is $C^{1}$ in the space $H^{1}_{0}(\Omega)$. Moreover, let $u\in H^{1}_{0}(\Omega)$, the first variation of $I$ at $u$ in $H^{1}_{0}(\Omega)$, which is denoted by $\nabla I$, extends to be a linear functional in $H^{1}_{0}(\Omega)$, with
    \begin{equation}\label{eq. weak form of nabla I}
        \langle\nabla I(u),\xi\rangle=\int_{\Omega}\Big\{\nabla u\cdot\nabla\xi+G'(u,y)\xi\Big\}dxdy.
    \end{equation}
    Furthermore,
    \begin{equation}\label{eq. nabla I(u) upper bound}
        \|\nabla I(u)\|_{H^{-1}(\Omega)}\leq C\Big(\|u\|_{H^{1}(\Omega)}\Big).
    \end{equation}
\end{lemma}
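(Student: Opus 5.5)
We first check that $I$ is finite on $H^1_0(\Omega)$, then compute its Gâteaux derivative, and finally show that this derivative is continuous from $H^1_0(\Omega)$ into $H^{-1}(\Omega)$. Together these give $C^1$.

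\textbf{Finiteness and the bound \eqref{eq. nabla I(u) upper bound}.} By (G1) and (G3), integrating $G'$ in $u$ gives the pointwise bounds
\[
|G(u,y)|\le C_0\Bigl(\tfrac{u^2}{2}+\tfrac{|u|^3}{3}\Bigr),\qquad |G'(u,y)|\le C_0(|u|+u^2).
\]
The Sobolev inequality in the channel (Lemma~\ref{lem. Sobolev in a channel}) embeds $H^1_0(\Omega)$ into $L^p(\Omega)$ for $2\le p<\infty$, in particular into $L^2$, $L^3$ and $L^4$. Hence $I(u)$ is finite. For $\xi\in H^1_0$, Hölder's inequality gives
\[
\Bigl|\int G'(u,y)\xi\Bigr|\le C_0\bigl(\|u\|_{L^2}\|\xi\|_{L^2}+\|u\|_{L^4}^2\|\xi\|_{L^2}\bigr)\le C\bigl(\|u\|_{H^1}+\|u\|_{H^1}^2\bigr)\|\xi\|_{H^1}.
\]
So the right-hand side of \eqref{eq. weak form of nabla I} defines a bounded linear functional whose $H^{-1}$ norm is at most $\|u\|_{H^1}+C(\|u\|_{H^1}+\|u\|_{H^1}^2)$. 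This is \eqref{eq. nabla I(u) upper bound}.

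\textbf{Fréchet differentiability.} Fix $u,\xi\in H^1_0$. The Dirichlet part is quadratic, so its derivative is exact:
\[
\tfrac12\|\nabla(u+\xi)\|^2-\tfrac12\|\nabla u\|^2-\int\nabla u\cdot\nabla\xi=\tfrac12\|\nabla\xi\|^2=O(\|\xi\|^2).
\]
For the potential part, Taylor's formula with the bound $|G''|\le C_0(1+|u|)$ gives
\[
|G(u+\xi,y)-G(u,y)-G'(u,y)\xi|\le C_0(1+|u|+|\xi|)\xi^2.
\]
Integrating and using Hölder with the $L^2$, $L^3$ and $L^4$ embeddings bounds the remainder by $C(1+\|u\|_{H^1}+\|\xi\|_{H^1})\|\xi\|_{H^1}^2=o(\|\xi\|_{H^1})$. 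So $I$ is Fréchet differentiable with derivative \eqref{eq. weak form of nabla I}.

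\textbf{Continuity of $u\mapsto\nabla I(u)$.} This is the step that needs the most care, because the nonlinearity has quadratic growth on an unbounded domain. It is handled by the same mean-value argument. For $u,v\in H^1_0$,
\[
|G'(u,y)-G'(v,y)|\le C_0(1+|u|+|v|)|u-v|.
\]
Testing against $\xi$ and applying Hölder with exponents $(2,2)$ and $(4,4,2)$ gives
\[
\|\nabla I(u)-\nabla I(v)\|_{H^{-1}}\le \|u-v\|_{H^1}+C\bigl(1+\|u\|_{H^1}+\|v\|_{H^1}\bigr)\|u-v\|_{H^1}.
\]
So $\nabla I$ is locally Lipschitz from $H^1_0$ into $H^{-1}$, and therefore $I\in C^1$. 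The only essential input is the $L^4$ Sobolev embedding in the infinite channel, which is exactly what Lemma~\ref{lem. Sobolev in a channel} supplies.
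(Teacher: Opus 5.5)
Your proof is correct and follows the same route as the paper. Both arguments combine the pointwise growth bounds from (G1)--(G3) with the channel Sobolev embedding (Lemma~\ref{lem. Sobolev in a channel}) to identify the derivative and bound it in $H^{-1}(\Omega)$. Your Taylor remainder estimate and the local Lipschitz bound $|G'(u,y)-G'(v,y)|\le C_0(1+|u|+|v|)|u-v|$ spell out the Fr\'echet differentiability and the continuity of $\nabla I$, which the paper only asserts in its final sentence.
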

\begin{proof}
    If $u\in H^{1}_{0}(\Omega)$, then it follows from (G1)-(G3) and Lemma~\ref{lem. Sobolev in a channel} that $G'(u,y)\in L^{2}(\Omega)$ for all $u\in H^{1}_{0}(\Omega)$. Then, for every $\xi\in H^{1}_{0}(\Omega)$, one has
    \begin{align*}
        \langle\nabla I(u),\xi\rangle=&\frac{d}{dt}I(u+t\xi)\Big|_{t=0}=\frac{d}{dt}\int_{\Omega}\Big\{\frac{|\nabla(u+t\xi)|^{2}}{2}+G(u+t\xi,y)\Big\}dxdy\Big|_{t=0}\\
        =&\int_{\Omega}\Big\{\nabla u\cdot\nabla\xi+G'(u,y)\xi\Big\}dxdy.
    \end{align*}
    This verifies \eqref{eq. weak form of nabla I}. It follows from the Cauchy-Schwarz inequality that
    \begin{equation*}
        \Big|\int_{\Omega}\Big\{\nabla u\cdot\nabla\xi+G'(u,y)\xi\Big\}dxdy\Big|\leq\|u\|_{H^{1}(\Omega)}\|\xi\|_{H^{1}(\Omega)}+\|G'(u,y)\|_{L^{2}(\Omega)}\|\xi\|_{L^{2}(\Omega)}.
    \end{equation*}
    Therefore, it holds that
    \begin{equation*}
        |\langle\nabla I(u),\xi\rangle|\leq\Big(\|u\|_{H^{1}(\Omega)}+\|G'(u,y)\|_{L^{2}(\Omega)}\Big)\cdot\|\xi\|_{H^{1}(\Omega)}.
    \end{equation*}
    Note that Lemma~\ref{lem. Sobolev in a channel} implies that $u\in L^{p}(\Omega)$ for $2\leq p\leq4$. It follows from the condition (G3) that
    \[
    \Big(G'(u, y)\Big)^{2}\leq C (|u|^2+|u|^4),
    \]
 $\|G'(u, y)\|_{L^{2}(\Omega)}$ must be finite. Therefore, one has
    \begin{equation*}
        |\langle\nabla I(u),\xi\rangle|\leq C(\|u\|_{H^{1}(\Omega)})\cdot\|\xi\|_{H^{1}(\Omega)}.
    \end{equation*}
    Hence, $\nabla I(u)\in H^{-1}(\Omega)$ and \eqref{eq. nabla I(u) upper bound} holds. Finally, it follows from the expression \eqref{eq. weak form of nabla I}, the Sobolev inequality \ref{eq. L p sobolev desired inequality}, and the assumption (G3) on $G''$ that $\nabla I(\cdot)$ is $H^{-1}(\Omega)$ functional continuous in $H^{1}_{0}(\Omega)$, i.e., $I(\cdot)$ is $C^{1}$ in $H^{1}_{0}(\Omega)$.
\end{proof}
Integrating \eqref{eq. weak form of nabla I} by parts yields the following alternative form of $\nabla I$,
\begin{equation}\label{eq. L2 way to understand nabla I}
    \langle\nabla I(u),\xi\rangle=\int_{\Omega}\xi\cdot\Big(G'(u,y)-\Delta u\Big)dxdy,\quad\forall\xi\in H^{1}_{0}(\Omega).
\end{equation}

the following lemma is the key to constructing a solution to \eqref{eq. Euler-Lagrange}.
\begin{lemma}\label{lem. convergence of nabla I}
    Assume that $G(u,y)$ satisfies (G1)-(G3) and define $I(u)$ as in \eqref{eq. I(u) definition}. Let $\{u_{n}\}\subseteq H^{1}_{0}(\Omega)$ converge to $u_{\infty}$ strongly in $H^{1}(\Omega)$. If $\|\nabla I(u_{n})\|_{H^{-1}(\Omega)}\to0$, then $u_{\infty}$ satisfies \eqref{eq. Euler-Lagrange} in the weak sense.
\end{lemma}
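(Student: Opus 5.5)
The plan is to show that $\nabla I$ is continuous from $H^{1}_{0}(\Omega)$ into $H^{-1}(\Omega)$, which Lemma~\ref{lem. derivative of I(u), i.e.: EL equation} essentially provides. Then
\[
\|\nabla I(u_\infty)\|_{H^{-1}}\le \|\nabla I(u_\infty)-\nabla I(u_n)\|_{H^{-1}}+\|\nabla I(u_n)\|_{H^{-1}}\to 0,
\]
so $\nabla I(u_\infty)=0$. By \eqref{eq. weak form of nabla I}, this identity says exactly that $\int_\Omega \nabla u_\infty\cdot\nabla\xi+G'(u_\infty,y)\xi=0$ for all $\xi\in H^1_0(\Omega)$, i.e.\ $u_\infty$ is a weak solution of \eqref{eq. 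Euler-Lagrange}.

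To make the continuity explicit, fix $\xi\in H^1_0(\Omega)$ with $\|\xi\|_{H^1}\le1$ and write
\[
\langle\nabla I(u_n)-\nabla I(u_\infty),\xi\rangle=\int_\Omega\nabla(u_n-u_\infty)\cdot\nabla\xi+\int_\Omega\big(G'(u_n,y)-G'(u_\infty,y)\big)\xi .
\]
The first term is bounded by $\|u_n-u_\infty\|_{H^1}$. For the second term, apply the mean value theorem in $u$ together with the bound $|G''(u,y)|\le C_0(1+|u|)$ from (G3):
\[
|G'(u_n,y)-G'(u_\infty,y)|\le C_0\big(1+|u_n|+|u_\infty|\big)|u_n-u_\infty|.
\]
Hölder's inequality with exponents $(2,2)$ on the constant part and $(4,4,2)$ on the linear part then gives the bound
\[
C_0\|u_n-u_\infty\|_{L^2}\|\xi\|_{L^2}+C_0\big(\|u_n\|_{L^4}+\|u_\infty\|_{L^4}\big)\|u_n-u_\infty\|_{L^4}\|\xi\|_{L^2}.
\]

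The Sobolev inequality of Lemma~\ref{lem. Sobolev in a channel} controls $L^p$ norms by $H^1$ norms for $2\le p\le4$ on the channel. The factor $\|u_n\|_{L^4}+\|u_\infty\|_{L^4}$ is therefore uniformly bounded, since $u_n$ converges and hence is bounded in $H^1$. Meanwhile $\|u_n-u_\infty\|_{L^2}$ and $\|u_n-u_\infty\|_{L^4}$ both tend to zero. Taking the supremum over $\xi$ gives $\|\nabla I(u_n)-\nabla I(u_\infty)\|_{H^{-1}}\to0$.

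The only point needing care is that $\Omega$ is unbounded. Because of this, the $L^2$ and $L^4$ embeddings must come from Lemma~\ref{lem. Sobolev in a channel}, which relies on the Poincaré inequality in the bounded $y$-direction, rather than from compact embeddings, which fail here. Since strong $H^1$ convergence is assumed, no compactness is required, so I do not expect a genuine obstacle.
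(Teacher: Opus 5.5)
Your proposal is correct and follows essentially the paper's argument: the mean value bound $|G'(u_n,y)-G'(u_\infty,y)|\le C_0(1+|u_n|+|u_\infty|)|u_n-u_\infty|$ from (G3), combined with H\"older in $L^2$/$L^4$, Lemma~\ref{lem. Sobolev in a channel} and the hypothesis $\|\nabla I(u_n)\|_{H^{-1}}\to0$, gives $\langle\nabla I(u_\infty),\xi\rangle=0$. The only cosmetic difference is that you take the supremum over unit $\xi$ to obtain norm convergence $\nabla I(u_n)\to\nabla I(u_\infty)$ in $H^{-1}(\Omega)$, whereas the paper tests against a fixed $\xi$ and bounds $\|G'(u_n,y)-G'(u_\infty,y)\|_{L^2}$ first.
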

\begin{proof}
    Let $\xi\in H^{1}_{0}(\Omega)$ with $\|\xi\|_{H^{1}_{0}(\Omega)}=1$ be a test function. It follows from \eqref{eq. weak form of nabla I} that
    \begin{equation}\label{eq. converge after test 1}
        \Big|\int_{\Omega}\Big\{\nabla u_{n}\cdot\nabla\xi+G'(u_{n},y)\xi\Big\}dxdy\Big|=|\langle\nabla I(u_{n}),\xi\rangle|\leq\|\nabla I(u_{n})\|_{H^{-1}(\Omega)}\to0.
    \end{equation}
    By the assumption (G3), one has
    \begin{equation*}
       \Big|G'(u_{n},y)-G'(u_{\infty},y)\Big|\leq C(1+|u_{n}|+|u_{\infty}|)\cdot|u_{n}-u_{\infty}|.
    \end{equation*}
    Applying the Cauchy-Schwarz inequality gives
    \begin{align*}
        &\int_{\Omega}\Big|G'(u_{n},y)-G'(u_{\infty},y)\Big|^{2}dx\\
        \leq&C\|u_{n}-u_{\infty}\|_{L^{2}(\Omega)}^{2}+C\Big(\|u_{n}\|_{L^{4}(\Omega)}^{2}+\|u_{\infty}\|_{L^{4}(\Omega)}^{2}\Big)\cdot\|u_{n}-u_{\infty}\|_{L^{4}(\Omega)}^{2}.
    \end{align*}
    As $u_{n}\to u_{\infty}$ in $H^{1}(\Omega)$, one has $u_{n}\to u_{\infty}$ in $L^{p}(\Omega)$ for all $p\geq2$. Hence it follows that
    \begin{equation}\label{eq. converge after test 2}
        \Big|\int_{\Omega}\Big\{(\nabla u_{n}-\nabla u_{\infty})\cdot\nabla\xi+\Big(G'(u_{n},y)-G'(u_{\infty},y)\Big)\xi\Big\}dxdy\Big|\to0.
    \end{equation}
    Combining \eqref{eq. converge after test 1} and \eqref{eq. converge after test 2} yields
    \begin{equation*}
        \int_{\Omega}\Big\{\nabla u_{\infty}\cdot\nabla\xi+G'(u_{\infty},y)\xi\Big\}dxdy=0.
    \end{equation*}
    This implies that $u_{\infty}$ satisfies \eqref{eq. Euler-Lagrange} in the weak sense.
\end{proof}

\subsection{Outline of the min-max construction}
The major difficulty in constructing a non-trivial critical point comes from the translation invariance of the energy functional $I$, so that the sequence of functions can lose compactness due to shifting to spatial infinity. Our key idea is to perform variational analysis in a function space that is stable under the Steiner symmetrization.

Define
\begin{equation}\label{defmathcalC}
    \mathcal{C}:=\Big\{u\in C^{\infty}_{0}(\Omega):u\geq0\mbox{ in }\Omega,\ u(x,y)\equiv u(-x,y),\ \partial_{x}u\leq0\mbox{ for }x\geq0\Big\},
\end{equation}
and let $\overline{\mathcal{C}}$ be the closure of $\mathcal{C}$ in $H^{1}_{0}(\Omega)$. In this paper, we restrict our study of the energy functional $I(u)$ to $\overline{\mathcal{C}}$.

\begin{proposition}\label{prop. Existence and convergence of Palais-Smale sequence}
Assume that $G(u,y)$ satisfies the assumptions (G1)-(G3), and let $I$ be defined in \eqref{eq. I(u) definition}. Then the following statements hold.
\begin{itemize}
    \item[(1)] There exists a positive constant $\sigma$ and a sequence $\{u_{n}\}\subseteq\mathcal{C}$ such that 
    \[
    \sigma/2<I(u_n)<2\sigma \text{ and }\|\nabla I(u_{n})\|_{H^{-1}(\Omega)}\to0.
    \]
    \item[(2)] Up to a subsequence (still labelled by $\{u_{n}\}$), there exists a $u_\infty\in\overline{\mathcal{C}}$ such that  $\|u_{n}-u_{\infty}\|_{H^{1}(\Omega)}\to 0$.
\end{itemize}
\end{proposition}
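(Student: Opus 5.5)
Part (1) is a mountain-pass geometry on the convex cone $\overline{\mathcal C}$, and Part (2) is a compactness argument that uses the symmetric-decreasing structure of $\mathcal C$ to rule out loss of mass at infinity.

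\textbf{Mountain-pass geometry.} By (G1) and (G3), $G(u,y)\ge c u^2 - C|u|^3$ for small $|u|$. Combined with Lemma~\ref{lem. Sobolev in a channel} ($L^3$ is controlled by $H^1$), this gives
\[
I(u)\ge c\|u\|_{H^1}^2-C\|u\|_{H^1}^3 .
\]
Hence there are $\rho>0$ and $\beta>0$ with $I\ge\beta$ on $\{\|u\|_{H^1}=\rho\}\cap\overline{\mathcal C}$. For the far endpoint, fix any $w\in\mathcal C$, $w\not\equiv0$. By (G2), $G(tw,y)\le -c t^3 w^3$ for large $t$, so $I(tw)\to-\infty$, and we set $\mathfrak e=t_0w$ with $I(\mathfrak e)<0$ and $\|\mathfrak e\|>\rho$. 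Define $\Upsilon$ and $\sigma\ge\beta>0$ as in the outline; the path $s\mapsto s\mathfrak e$ shows that $\sigma<\infty$.

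\textbf{Palais--Smale sequence inside $\mathcal C$.} The standard deformation lemma needs a flow that preserves $\overline{\mathcal C}$. As the organization indicates (Section~\ref{Section3}), I would use the semilinear heat flow $\partial_t u=\Delta u-G'(u,y)$, which is the $L^2$-gradient flow of $I$. By Appendix~\ref{AppendixB}, it preserves nonnegativity, evenness in $x$, and monotonicity in $|x|$ (via the maximum principle applied to $u$ and to $\partial_x u$ on $x>0$, using that the equation is $x$-independent). If no PS sequence existed at level near $\sigma$, i.e. $\|\nabla I\|_{H^{-1}}\ge\delta$ on $\{|I-\sigma|<\epsilon\}\cap\overline{\mathcal C}$, then running the flow for a fixed time would push a nearly optimal path below $\sigma-\epsilon$. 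Energy decreases at rate $\|\partial_tu\|_{L^2}^2$, and this must be related to $\|\nabla I\|_{H^{-1}}$ through parabolic smoothing. The endpoints $0$ and $\mathfrak e$ stay admissible: $0$ is fixed, and $\mathfrak e$ only decreases in energy, staying below $0$. This contradicts the definition of $\sigma$. Density of $\mathcal C$ and continuity of $I$ and $\nabla I$ (Lemma~\ref{lem. derivative of I(u), i.e.: EL equation}) then let us take $u_n\in\mathcal C$ with $\sigma/2<I(u_n)<2\sigma$. I expect controlling $H^{-1}$-smallness through the $L^2$ flow (local well-posedness for quadratic nonlinearities, uniform in $H^1$ bounds) to be technical but routine given the appendix.

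\textbf{Boundedness and compactness.} Combining $I(u_n)$ with $\langle\nabla I(u_n),u_n\rangle$ gives
\[
3I(u_n)-\langle\nabla I(u_n),u_n\rangle=\int\Big(\tfrac12|\nabla u_n|^2+3G-G'u_n\Big).
\]
Since $G'''\le -1/C_0$ and $u_n\ge0$, one has $3G-G'u\ge c u^2$ (Taylor with (G1); this is valid for $u\ge0$, which the cone guarantees). Hence $\|u_n\|_{H^1}$ is bounded, and we extract $u_n\rightharpoonup u_\infty$ weakly in $H^1$.

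The key step, and the main obstacle, is showing $u_\infty\ne0$ and obtaining strong convergence. Since $u_n$ is even and decreasing in $|x|$, the estimate $u_n(x,y)^2\le \frac1{2|x|}\int u_n(\cdot,y)^2dx$ gives uniform decay in $|x|$ in $L^2_y$. Together with local compactness and interpolation with $L^4$ bounds, this yields $u_n\to u_\infty$ strongly in $L^p$ for $2<p<4$, in fact also in $L^3$. If $u_\infty=0$, then $\int G'(u_n)u_n$ would be bounded by $\int G''(0)u_n^2$ plus $o(1)$, and $\langle\nabla I(u_n),u_n\rangle\to0$ with $G''(0)>0$ would force $\|u_n\|_{H^1}\to0$, contradicting $I(u_n)>\sigma/2$. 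For strong $H^1$ convergence, subtract the weak equations tested against $u_n-u_\infty$. The nonlinear terms split into a coercive part $G''(0,y)|u_n-u_\infty|^2$ plus higher-order parts that converge by the $L^3$ compactness. This gives $\|u_n-u_\infty\|_{H^1}\to0$, and $u_\infty\in\overline{\mathcal C}$ by closedness.
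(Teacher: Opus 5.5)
Your overall architecture matches the paper's: mountain-pass geometry on the cone, the semilinear heat flow as an $L^2$-gradient flow preserving $\overline{\mathcal C}$ to produce the Palais--Smale sequence, and compactness from the even, decreasing-in-$|x|$ structure.

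\textbf{The gap in Part (2).} Your $H^1$ bound fails. The inequality $3G-G'u\ge cu^2$ for $u\ge0$ does not follow from (G1)--(G3). Write $G=\tfrac12G''(0,y)u^2+H$ and apply Taylor's formula:
\[
3G-uG'=\tfrac12G''(0,y)\,u^2+\int_0^u(u-s)\Big(\tfrac u2-\tfrac{3s}{2}\Big)G'''(s,y)\,ds .
\]
The weight changes sign at $s=u/3$. The integral vanishes for constant $G'''$ but has no sign in general.

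For instance, take $G'''(s)=-1-9e^{-s^2}$ with $G(0)=G'(0)=0$ and $G''(0)=1$. This satisfies (G1)--(G3), yet $3G-uG'=(\tfrac12-\tfrac{9\sqrt\pi}{4})u^2+O(u)$ as $u\to+\infty$. Enlarging the bump makes the negative coefficient arbitrarily large. Since the Poincar\'e constant of the channel is fixed, $3I(u_n)-\langle\nabla I(u_n),u_n\rangle$ then no longer controls $\|\nabla u_n\|_{L^2}$.

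The paper uses exponent $2$ instead, where the weight is sign-definite:
\[
\tfrac12uG'-G=\tfrac12\int_0^u(u-s)\,s\,G'''(s,y)\,ds\le-c\,u^3\qquad(u\ge0).
\]
Hence $-\tfrac12\|\nabla I(u_n)\|_{H^{-1}}\|u_n\|_{H^1}\le\tfrac12\langle\nabla I(u_n),u_n\rangle\le I(u_n)-c\int u_n^3$, which bounds $\int u_n^3$. Then $\tfrac12\|\nabla u_n\|_{L^2}^2=I(u_n)-\int G(u_n,y)\le I(u_n)+C\int u_n^3$, using $G\ge-Cu^3$ on $u\ge0$, closes the estimate. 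With this replacement, the rest of your Part (2) goes through: your $L^p$-compactness for $p>2$, and your strong-convergence step (coercive term $G''(0,y)|u_n-u_\infty|^2$ plus a cubic remainder controlled by $L^3$ compactness). These are a mild variant of the paper's Steps 2--3.

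\textbf{Corrections in Part (1).} First, you cannot flow the whole path for a fixed time and let the endpoint move. The class fixes $\gamma(1)=\mathfrak e$. Moreover, $-G'(v)\sim+v^2$ is focusing with $\tfrac12uG'-G\le-cu^3$, so Levine's concavity argument shows that the flow from negative-energy data such as $\mathfrak e$ blows up in finite time. The flow time must be a continuous function of $s$ that vanishes below the strip. The paper achieves this with $\mathcal T(s)$, assembled from local stop times and a partition of unity.

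Second, comparing the dissipation with $\|\nabla I\|_{H^{-1}}$ needs no parabolic smoothing, since $\|\nabla I(v)\|_{H^{-1}}\le C\|\Delta v-G'(v,y)\|_{L^2}=C\big(-\tfrac{d}{dt}I(v)\big)^{1/2}$.

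What you leave open is the a priori bound excluding blow-up while the flow is in the strip. Because $I$ is not coercive, the energy alone gives no $H^1$ control. The paper obtains the bound from the displacement estimate
\[
\|v(t)-\gamma(s)\|_{L^2}\le(\min\|\partial_tv\|_{L^2})^{-1}\int\|\partial_tv\|_{L^2}^2\,dt,
\]
which is small when the strip is thin relative to the lower bound on the dissipation rate, combined with the $\epsilon$-regularity Lemma~\ref{lem. epsilon regularity}. In two dimensions you could instead combine this $L^2$ bound with the energy bound through $\int v^3\le C\|v\|_{L^2}^2\|\nabla v\|_{L^2}$ to bound $\|\nabla v\|_{L^2}$.

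Your density step placing $u_n$ in $\mathcal C$ rather than $\overline{\mathcal C}$ is fine.
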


With Proposition~\ref{prop. Existence and convergence of Palais-Smale sequence} and Lemma~\ref{lem. convergence of nabla I}, we are able to prove Proposition~\ref{prop. exist weak solution}.

\begin{proof}[Proof of Proposition~\ref{prop. exist weak solution}]
    Let $\{u_{n}\}$ be the sequence provided by Proposition~\ref{prop. Existence and convergence of Palais-Smale sequence} which converges to $u_\infty\in H^{1}_{0}(\Omega)$. As $\|u_{n}-u_\infty\|_{H^{1}(\Omega)}\to0$, it follows from Lemma~\ref{lem. Sobolev in a channel} that $\|u_{n}-u_\infty\|_{L^{p}(\Omega)}\to0$ (and thus $u_\infty\in L^{p}(\Omega)$) for all $p\geq2$. Therefore, it holds that
    \begin{equation*}
 I(u_\infty)=\lim_{n\to\infty}I(u_{n})\geq\frac{\sigma}{2}>0.
    \end{equation*}
    Hence, $u_\infty(x,y)\not\equiv0$ for $(x,y)\in\Omega$. Applying Lemma~\ref{lem. convergence of nabla I} yields that $u=u_\infty$ is a weak solution to \eqref{eq. Euler-Lagrange}. This finishes the proof of Proposition~\ref{prop. exist weak solution}.
\end{proof}

In the next section, we will give a detailed proof of Proposition~\ref{prop. Existence and convergence of Palais-Smale sequence}.

\section{Construction of the Palais-Smale sequence and min-max method}\label{Section3}
This section is devoted to the proof of Proposition~\ref{prop. Existence and convergence of Palais-Smale sequence}.
\subsection{Outline of the construction}
In this subsection, we outline the proof of the first part of Proposition~\ref{prop. Existence and convergence of Palais-Smale sequence}.
Our construction is inspired by the classical min-max method.
Notice that $u\equiv0$ is a stable critical point of the functional $I$ in $\overline{\mathcal{C}}$ and our first step is to find an endpoint function $\mathfrak{e}$ satisfying $I(\mathfrak{e})<I(0)=0$. In particular, there exists a positive constant $\sigma$, such that for any homotopy curve $\gamma(s):[0,1]\to\overline{\mathcal{C}}$ satisfying
\begin{equation*}
    \gamma(0;x,y) = 0\mbox{ and }\gamma(1;x,y)=\mathfrak{e}(x,y),
\end{equation*}
it must hold $\displaystyle\max_{s\in[0,1]} I\big(\gamma(s)\big)\geq\frac{\sigma}{2}>0$.

In the min-max method, we construct the Palais-Smale sequence $\{u_{n}\}$ and show the compactness of the sequence. In particular, let $\{\gamma_{n}\}\subseteq\Gamma$ be the minimizing sequence connecting $0$ and $\mathfrak{e}$, and let $u_{n}=\gamma_{n}(s_{n})$ be the corresponding maximizer of $I$. Then one has
\begin{equation*}
\lim_{n\to \infty} I(u_{n}) = \inf_{\gamma\in\mathcal{C},\gamma(0)\equiv0,\gamma(1)\equiv \mathfrak{e}}\max_{s\in[0,1]}I\big(\gamma(s)\big).
\end{equation*}
The weak limit of the Palais-Smale sequence is a saddle point of the functional $I$.

We first verify that $0$ is a stable critical point of $I$ in $\overline{\mathcal{C}}$. Next, we find $\mathfrak{e}$, and construct a pre-Palais-Smale sequence of $\{u_{n}\}$. In order to show that $\{u_{n}\}$ converges subsequentially, it is necessary that $\{\nabla I(u_{n})\}$ converges subsequentially weakly to $0$. Since the space $\overline{\mathcal{C}}$ is not invariant under the gradient flow of $I$ in $H_{0}^1$, it is difficult to get a Palais-Smale sequence via the gradient flow. Our key idea is to deform the homotopy curve using the heat flow instead.

\subsection{Construction of pre-Palais-Smale sequence }
In this subsection,  we first verify that  $u=0$ is a local strictly minimizer of $I$ in $\overline{\mathcal{C}}$. Next, we  find $\mathfrak{e}$ and then construct a pre-Palais-Smale sequence of $\{u_{n}\}$.
\begin{lemma}[Mountain-passes and end points]\label{lem. mountain pass end point}
    Let $I(u)$ be defined as in \eqref{eq. I(u) definition}, with $G(u,y)$ satisfying the assumptions (G1)-(G3). Then, $u=0$ is a stable critical point of $I$ in $\overline{\mathcal{C}}$, and there exists an $\mathfrak{e}\in\mathcal{C}$ with $I(\mathfrak{e})<0$, such that for any continuous homotopy $\gamma(s):[0,1]\to H^{1}_{0}(\Omega)$ with $\gamma(0)\equiv0$ and $\gamma(1)\equiv \mathfrak{e}$, there exists a uniform $\sigma_{0}>0$ such that $\displaystyle\max_{s\in[0,1]}I(\gamma_{s})\geq\sigma_{0}$.
\end{lemma}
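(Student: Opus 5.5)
The plan is the usual mountain-pass geometry. Two facts are needed: a quantitative lower bound for $I$ on a small sphere in $H^{1}_{0}(\Omega)$, and an explicit element of $\mathcal{C}$ with negative energy lying outside that sphere.

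\emph{Step 1: $0$ is a strict local minimizer.} By (G1), $G(0,y)=G'(0,y)=0$ and $G''(0,y)\ge 2c_0>0$. The bound on $|G''|$ in (G3) and Taylor's formula then give
\[
G(u,y)\ge c_0u^{2}-C|u|^{3}.
\]
Integrating and using Lemma~\ref{lem. Sobolev in a channel}, which gives $\|u\|_{L^{3}}^{3}\le C\|u\|_{H^{1}}^{3}$ on the channel, we obtain
\[
I(u)\ge \tfrac12\|\nabla u\|_{L^2}^2+c_0\|u\|_{L^2}^2-C\|u\|_{H^1}^3\ge c_1\|u\|_{H^1}^2-C\|u\|_{H^1}^3 .
\]
Fix $r>0$ small. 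On the sphere $\|u\|_{H^1}=r$ this yields $I(u)\ge \sigma_0:=c_1r^2/2>0$, and for $0<\|u\|_{H^1}\le r$ it yields $I(u)>0$. So $u=0$ is a strict local minimizer of $I$ on all of $H^{1}_{0}(\Omega)$, and in particular on $\overline{\mathcal{C}}$; it is a critical point since $\nabla I(0)=0$ by \eqref{eq. weak form of nabla I}.

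\emph{Step 2: construct $\mathfrak{e}$.} Take a fixed $w\in\mathcal{C}$ with $w\ge0$ and $w\not\equiv0$, for instance $w=\eta(x)\zeta(y)$ with $\eta,\zeta$ smooth compactly supported bumps, $\eta$ even and nonincreasing on $[0,\infty)$. Consider $I(tw)$ as $t\to+\infty$. The first condition in (G2) gives $\mu>0$ and $M$ with $G(u,y)\le-\mu u^{3}$ for $u\ge M$. For $0\le u\le M$, $G$ is bounded by some constant $K$ by continuity. Since $w$ has compact support of finite measure,
\[
I(tw)\le \tfrac{t^2}{2}\|\nabla w\|_{L^2}^2+K|\mathrm{supp}\,w|-\mu t^{3}\int_{\{tw\ge M\}}w^{3}.
\]
The last integral tends to $\int w^3>0$, so $I(tw)\to-\infty$. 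Choose $t_0$ large so that $I(t_0w)<0$ and $\|t_0w\|_{H^1}>r$, and set $\mathfrak{e}=t_0w\in\mathcal{C}$. Only $u\ge0$ matters here because elements of $\mathcal{C}$ are nonnegative; this is why only the behaviour of $G$ at $+\infty$ is assumed.

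\emph{Step 3: the barrier.} Let $\gamma\in C([0,1],H^{1}_{0}(\Omega))$ with $\gamma(0)=0$ and $\gamma(1)=\mathfrak{e}$. The map $s\mapsto\|\gamma(s)\|_{H^1}$ is continuous, goes from $0$ to a value greater than $r$, and so equals $r$ at some $s^*$. Step 1 then gives $\max_s I(\gamma(s))\ge I(\gamma(s^*))\ge\sigma_0$, with $\sigma_0$ independent of $\gamma$.

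The only delicate point is Step 1: it needs the Sobolev embedding into $L^{3}$ on the unbounded channel, and it needs $G''(0,y)>0$ to control the full $H^1$ norm, because Poincaré in $y$ alone would also suffice but the $L^2$ term is handled directly by $c_0$. Everything else is routine.
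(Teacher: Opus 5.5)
Your proposal is correct and has the same three-step skeleton as the paper. These are coercivity on a small $H^{1}$-sphere via the channel Sobolev inequality, an endpoint in $\mathcal{C}$ with negative energy outside that sphere, and the intermediate value theorem. Two of the steps are carried out differently.

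\textbf{The endpoint.} The paper does not use the ray $t\mapsto tw$. It takes $\mathfrak{e}=a(x)b(y)$ and proceeds in two stages. First it chooses the amplitude $k$ of $b$ so that the one-dimensional cross-sectional energy satisfies $\int_{-1}^{1}\{|b'|^{2}/2+G(b,y)\}dy\leq-1$; this uses the bound $G\leq C_{1}u^{2}-u^{3}/C_{1}$ for $u\geq0$, which comes from $G'''\leq-1/C_{0}$. Then it takes $a\equiv1$ on a long plateau $[-L,L]$, so that $I(\mathfrak{e})\leq-2L+O(1)$ and $\|\mathfrak{e}\|_{H^{1}}^{2}\geq cL$.

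Your scaling argument is more economical. It uses only the first half of (G2), one fixed bump, and monotone convergence. Moreover, the requirement $\|\mathfrak{e}\|_{H^{1}}>r$ is automatic from $I(\mathfrak{e})<0$ together with your Step 1. The paper's plateau construction gives an explicit bound $I(\mathfrak{e})\leq-1$ and a profile adapted to the channel, but nothing beyond that which this lemma needs.

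\textbf{Coercivity near $0$.} The paper discards the quadratic part of $G$. It uses only $G\geq-C_{1}|u|^{3}$ together with the channel Poincar\'e inequality. You instead keep the $c_{0}u^{2}$ term to control the $L^{2}$ part of the norm; both approaches work. Your version also states the lower bound for both signs of $u$. The paper states $G\geq-C_{1}|u|^{3}$ only for $u\geq0$, although the lemma quantifies over homotopies in all of $H^{1}_{0}(\Omega)$. The gap there is harmless: $G'''<0$ gives $G''(u,y)\geq G''(0,y)>0$ for $u\leq0$, hence $G\geq0$ there.

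\textbf{One small imprecision.} The bound $G\geq c_{0}u^{2}-C|u|^{3}$ does not follow from Taylor's formula and the growth bound $|G''|\leq C_{0}(1+|u|)$ alone. For $|u|\leq1$ you also need a bound on $G'''$ near $u=0$. This is available because $G\in C^{3}$ and $[-1,1]$ is compact, and it gives $G\geq\tfrac12G''(0,y)u^{2}-C|u|^{3}$. For $|u|\geq1$ the growth bound then suffices, because $u^{2}\leq|u|^{3}$ there. With that added, the argument is complete.
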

\begin{proof}
    \textbf{Step 1. Convexity of $I(\cdot)$ near $0$.} It follows from the assumptions (G1)-(G3) that there exists a $C_{1}>0$ to guarantee
    \begin{equation*}
        G(u,y)\geq-C_{1}|u|^{3}\quad\mbox{for }u\geq0.
    \end{equation*}
    Then, let us fix a small $\delta\in(0,\frac{1}{4C_{1}\mathfrak{C}(3)})$, where $\mathfrak{C}(p)$ is the Sobolev constant with $p=3$ from Lemma~\ref{lem. Sobolev in a channel}. Take an arbitrary $u\in H^{1}_{0}(\Omega)$ with $\|u\|_{H^{1}(\Omega)}=\delta$, then by Lemma~\ref{lem. Sobolev in a channel}, it holds that
    \begin{equation*}
        \int_{\Omega}|u|^{3}dxdy\leq\mathfrak{C}(3)\delta^{3}\leq\frac{\delta^{2}}{4C_{1}}.
    \end{equation*}
    
    Therefore, it holds that
    \begin{equation*}
        I(u)\geq\int_{\Omega}\frac{|\nabla u|^{2}}{2}dxdy-C_{1}\int_{\Omega}|u|^{3}dxdy\geq\frac{\delta^{2}}{2}-C_{1}\mathfrak{C}(3)\delta^{3}\geq\frac{\delta^{2}}{4}.
    \end{equation*}

    \textbf{Step 2. Construction of $\mathfrak{e}\in\mathcal{C}$.} For computational simplicity, we look for
    \begin{equation*}
        \mathfrak{e}(x,y)=a(x)b(y),
    \end{equation*}
    where $a(x)\in C^{\infty}_{0}(\mathbb{R})$ is even and non-increasing in $|x|$, and $b(y)\in C^{\infty}_{0}([-1,1])$.
    
    In fact, let $k$ be sufficiently large and let us set
    \begin{equation*}
        b(y)=k\cdot\exp{\Big(\frac{1}{y^{2}-0.81}\Big)}\chi_{\{|y|<0.9\}}.
    \end{equation*}
    Recalling the assumptions (G1)-(G3) gives
    \begin{equation*}
        G(u,y)\leq C_{1}u^{2}-\frac{1}{C_{1}}u^{3}\quad\mbox{for }u\geq0.
    \end{equation*}
    Then for large $k$, one has
    \begin{equation}\label{eq. I(B) very negative}\begin{aligned}
        \int_{-1}^{1}\Big\{\frac{|\nabla b(y)|^{2}}{2}+G\Big(b(y),y\Big)\Big\}dy\leq&\int_{-1}^{1}\Big\{\frac{|\nabla b(y)|^{2}}{2}+C_{1}b(y)^{2}\Big\}dy-\int_{-1}^{1}\frac{b(y)^{3}}{C_{1}}dy\\
        =&C_{2}k^{2}-c_{3}k^{3}\leq-1.
    \end{aligned}\end{equation}
    
    Let $L$ be large. Assume that  $a(x)=a(|x|)\geq0$ is decreasing in $|x|$ and satisfies
    \begin{equation*}
        a(x)\equiv1\mbox{ for }x\in[-L,L],\quad a(x)\equiv0\mbox{ for }x\in[-L-2,L+2]^{c},\quad|a'(|x|)|\leq1\mbox{ for }x\in\mathbb{R}.
    \end{equation*}
    It is easy to see that one has
    \begin{equation}\label{eq. u_end good}
        \|\mathfrak{e}\|_{H^{1}(\Omega)}\geq2\delta\mbox{ and }I(\mathfrak{e})\leq-1,
    \end{equation}
provided that $L$ is sufficiently large.
    In fact, as $\mathfrak{e}(x,y)=b(y)$ for $x\in[-L,L]$, as long as $L$ is sufficiently large, the first inequality in \eqref{eq. u_end good} is a consequence of the following estimate:
    \begin{equation*}
        \|\mathfrak{e}\|_{H^{1}(\Omega)}^{2}\geq2L\int_{-1}^{1}|\nabla b(y)|^{2}dy=c_{4}L\geq4\delta^{2}.
    \end{equation*}
    The second inequality in \eqref{eq. u_end good} follows from the largeness of $L$ and the negativity property \eqref{eq. I(B) very negative}. In fact, for a fixed $b(y)$, we can find a fixed constant $C_{2}$ such that for $L>100$, the estimate
    \begin{equation*}
        \Big\{\int_{-L-2}^{-L}+\int_{L}^{L+2}\Big\}\int_{-1}^{1}\Big\{\frac{|\nabla \mathfrak{e}|^{2}}{2}+G(\mathfrak{e},y)\Big\}dxdy\leq C_{5}
    \end{equation*}
    holds. As $\mathfrak{e}(x,y)=b(y)$ in $[-L,L]\times[-1,1]$, by choosing a large $L$, it follows from \eqref{eq. I(B) very negative} that one has
    \begin{align*}
        \int_{[-L,L]\times[-1,1]}\Big\{\frac{|\nabla \mathfrak{e}|^{2}}{2}+G(\mathfrak{e},y)\Big\}dxdy=&2L\int_{-1}^{1}\Big\{\frac{|\nabla b(y)|^{2}}{2}+G\Big(b(y),y\Big)\Big\}dy\\
        \leq&-2L\leq-1-2C_{5}.
    \end{align*}
    In summary, one has $I(\mathfrak{e})\leq-1-C_{5}\leq-1$. This proves the second inequality in \eqref{eq. u_end good}.

    \textbf{Step 3. Strict positivity of the maximal energy.} It follows from the construction of $a(x)$ and $b(y)$ that $\mathfrak{e}\in\mathcal{C}$. Using \eqref{eq. u_end good} and the intermediate value theorem, if $\gamma$ is a continuous homotopy in $H^{1}_{0}(\Omega)$ linking $0$ and $\mathfrak{e}$, then one has
    \begin{equation*}
        \|\gamma(s^{*})\|_{H^{1}(\Omega)}=\delta\quad\mbox{for some }s^{*}\in(0,1).
    \end{equation*}
    Therefore, it holds that 
\[
\displaystyle\max_{s\in[0,1]}I\big(\gamma(s)\big)\geq I\big(\gamma(s^{*})\big)\geq\frac{\delta^{2}}{4}.
\]
Hence the proof of Lemma~\ref{lem. mountain pass end point} is completed by setting $\displaystyle\sigma_{0}:=\frac{\delta^{2}}{4}$.
\end{proof}

For the function $\mathfrak{e}$ constructed in Lemma~\ref{lem. mountain pass end point}, define
\begin{equation}\label{eq. define Gamma}
    \Upsilon:=\Big\{\gamma:[0,1]\to\mathcal{C}:\ \gamma(0)\equiv0,\ \gamma(1)\equiv\mathfrak{e},\ \lim_{d\to 0}\sup_{|s-t|\leq d}\|\gamma(s)-\gamma(t)\|_{H^{4}(\Omega)}=0\Big\},
\end{equation}
and
\begin{equation}\label{eq. define Gamma bar}
    \overline{\Upsilon}:=\Big\{\gamma:[0,1]\to\overline{\mathcal{C}}:\ \gamma(0)\equiv0,\ \gamma(1)\equiv\mathfrak{e},\ \lim_{d\to 0}\sup_{|s-t|\leq d}\|\gamma(s)-\gamma(t)\|_{H^{1}(\Omega)}=0\Big\}.
\end{equation}
Then we denote
\begin{equation}\label{eq. minimal of homotopy}  \sigma:=\inf_{\gamma\in\overline{\Upsilon}}\max_{s\in[0,1]}I\big(\gamma(s)\big).
\end{equation}

As $H^{4}\subseteq C^{2,\alpha}$ for all $\alpha\in(0,1)$, any homotopy $\gamma\in\Gamma$ is also continuous in $C^{2,\alpha}(\Omega)$. Since $\overline{\mathcal{C}}\subseteq H^{1}_{0}(\Omega)$ and $H^{4}\subseteq H^{1}$, one has $\sigma\geq\sigma_{0}>0$, where $\sigma_{0}$ is  the constant appeared in Lemma~\ref{lem. mountain pass end point}. Finally, it follows from the inclusion $\mathcal{C}\subseteq\overline{\mathcal{C}}$ that $\displaystyle\inf_{\gamma\in\Upsilon}\max_{s\in[0,1]}I\big(\gamma(s)\big)\geq\sigma$. In fact, the more careful analysis in the following lemma shows that $\displaystyle\inf_{\gamma\in\Upsilon}\max_{s\in[0,1]}I\big(\gamma(s)\big)=\sigma$.
\begin{lemma}\label{lem. rearrange the mountain pass}
  Let $I(u)$ be defined as in \eqref{eq. I(u) definition}, with $G(u,y)$ satisfying the assumptions (G1)-(G3). Let $\Upsilon$ and $\overline{\Upsilon}$ be defined in \eqref{eq. define Gamma} and \eqref{eq. define Gamma bar}, respectively. Then
    \begin{equation*}
\inf_{\gamma\in\Upsilon}\max_{s\in[0,1]}I\big(\gamma(s)\big)=\inf_{\gamma\in\overline{\Upsilon}}\max_{s\in[0,1]}I\big(\gamma(s)\big)=\sigma.
    \end{equation*}
\end{lemma}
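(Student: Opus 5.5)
Since $\Upsilon\subseteq\overline{\Upsilon}$ (an $H^4$-continuous path into $\mathcal{C}$ is $H^1$-continuous into $\overline{\mathcal{C}}$), one inequality, $\inf_{\Upsilon}\max I\geq\sigma$, is already noted in the text. It remains to show that for every $\gamma\in\overline{\Upsilon}$ and every $\epsilon>0$ there is $\tilde\gamma\in\Upsilon$ with $\max_s I(\tilde\gamma(s))\leq\max_s I(\gamma(s))+\epsilon$. I will obtain $\tilde\gamma$ by regularizing $\gamma$ in two stages: a discretization in $s$ that lands in $\mathcal{C}$, followed by a smoothing that keeps values in $\mathcal{C}$ and gives $H^4$-continuity in $s$.

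\textbf{Step 1 (uniform continuity of $I$).} By Lemma~\ref{lem. derivative of I(u), i.e.: EL equation}, $I$ is $C^1$ on $H^1_0(\Omega)$ with $\|\nabla I(u)\|_{H^{-1}}\leq C(\|u\|_{H^1})$. Hence $I$ is Lipschitz on bounded sets of $H^1_0$. The image $\gamma([0,1])$ is compact in $H^1_0$, hence bounded, so there is $\eta>0$ such that $\|v-\gamma(s)\|_{H^1}<\eta$ implies $|I(v)-I(\gamma(s))|<\epsilon$. It therefore suffices to build $\tilde\gamma\in\Upsilon$ that is uniformly $\eta$-close to $\gamma$ in $H^1$.

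\textbf{Step 2 (piecewise linear approximation in $\mathcal{C}$).} By uniform continuity, pick a partition $0=s_0<s_1<\dots<s_N=1$ such that $\gamma$ oscillates by less than $\eta/3$ on each $[s_{i-1},s_i]$. For each $0<i<N$, use the definition of $\overline{\mathcal{C}}$ as the $H^1$-closure of $\mathcal{C}$ to pick $v_i\in\mathcal{C}$ with $\|v_i-\gamma(s_i)\|_{H^1}<\eta/3$. Set $v_0=0$ and $v_N=\mathfrak{e}$; both lie in $\mathcal{C}$ by Lemma~\ref{lem. mountain pass end point}. Define $\tilde\gamma$ to be the piecewise linear interpolation of the $v_i$. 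The key point is that $\mathcal{C}$ is a convex cone: nonnegativity, evenness in $x$, $\partial_x u\leq0$ for $x\geq0$, and compact support are all preserved by convex combinations. So $\tilde\gamma(s)\in\mathcal{C}$ for all $s$. Moreover $\tilde\gamma$ is Lipschitz into $H^4$, since it is piecewise affine between finitely many $C^\infty_0$ functions. Thus $\tilde\gamma\in\Upsilon$. On $[s_{i-1},s_i]$, the point $\tilde\gamma(s)$ is a convex combination of $v_{i-1}$ and $v_i$, each of which is within $2\eta/3$ of $\gamma(s)$ by the triangle inequality. Convexity of the norm then gives $\|\tilde\gamma(s)-\gamma(s)\|_{H^1}<\eta$.

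\textbf{Step 3 (conclusion).} Step 1 gives $\max_s I(\tilde\gamma(s))\leq\max_s I(\gamma(s))+\epsilon$. Taking the infimum over $\gamma\in\overline{\Upsilon}$ and letting $\epsilon\to0$ yields $\inf_{\Upsilon}\max I\leq\sigma$, so the two infima coincide.

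\textbf{Main obstacle.} The delicate point is checking that $\mathcal{C}$ really is closed under convex combinations and that the cone constraints survive. This works only because the monotonicity constraint is linear ($\partial_x u\leq0$). Had we needed mollification instead, Steiner-type symmetry would have to be preserved, which would be harder. The interpolation avoids this entirely, so the remaining work is routine bookkeeping of the local Lipschitz bound for $I$ from (G3) and Lemma~\ref{lem. Sobolev in a channel}.
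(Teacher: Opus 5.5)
Your proof is correct. The construction is the same as the paper's: sample the path at finitely many parameters, replace the interior sample values by $H^1$-close elements of $\mathcal{C}$, keep the endpoints $0$ and $\mathfrak{e}$, and interpolate linearly. Since $\mathcal{C}$ is a convex cone, the broken line stays in $\mathcal{C}$ and is Lipschitz into $H^4$.

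Where you differ is in how you bound $I$ along the segments.

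\textbf{The paper's route.} It bounds $I$ on each segment by the convex combination of its endpoint values. The Dirichlet part $I_{D}$ is convex. The potential part $I_{P}$ has a concavity deficit
$G((1-\lambda)\alpha+\lambda\beta,y)-(1-\lambda)G(\alpha,y)-\lambda G(\beta,y)\le C(1+|\alpha|+|\beta|)|\alpha-\beta|^{2}$,
coming from the bound on $G''$ in (G3). After H\"older and the channel Sobolev inequality, this deficit is $O(1/n^{2})$ once consecutive nodes are $H^1$-close. So the paper only has to control the energy at the nodes.

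\textbf{Your route.} You show that the broken line stays uniformly $H^1$-close to the original path at every parameter, $\|\tilde\gamma(s)-\gamma(s)\|_{H^1}<\eta$. You then invoke local Lipschitz continuity of $I$ on bounded subsets of $H^1_0$. This uses no structure of $I$ beyond continuity on $H^1_0$. Indeed, by compactness of $\gamma([0,1])$, mere continuity already yields the uniform $\eta$ you need.

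\textbf{Comparison.} Your argument is shorter and would work verbatim for any functional continuous on $H^1_0$. The paper's argument is more quantitative and uses (G3) explicitly, though it too relies on continuity of $I$ in $H^1$ to control the energy of the approximants at the nodes.

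One small presentational point: the ``smoothing'' second stage announced in your first paragraph never appears and is not needed. A piecewise-affine path between finitely many $C^\infty_0$ functions is already $H^4$-continuous.
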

\begin{proof}
    For each $n\in\mathbb{N}$, there exists a homotopy $\widetilde{\gamma}\in\overline{\Upsilon}$ such that
    \begin{equation*}
        \max_{s\in[0,1]}I\big(\widetilde{\gamma}(s)\big)\leq\sigma+\frac{1}{n^{3}}.
    \end{equation*}
    For such a fixed homotopy, denote $\displaystyle M=\max_{s\in[0,1]}\|\widetilde{\gamma}(s)\|_{L^{2}(\Omega)}+1$.
    
    \textbf{Step 1. Construction of $\gamma$ at sample points.} Let $N\in\mathbb{N}$ and let $s_{i}=\frac{i}{N}$ for $i\in\{0,1,2,\cdots,N\}$. Here, $N$ is chosen so that
    \begin{equation}\label{eq. how to choose dense sample points}
        \|\widetilde{\gamma}(s_{i+1})-\widetilde{\gamma}(s_{i})\|_{H^{1}(\Omega)}\leq\frac{1}{M\cdot n^{3}},\quad\mbox{for all }0\leq i<N.
    \end{equation}
 Now we construct a homotopy $\gamma\in\Gamma$ such that $\displaystyle\max_{s\in[0,1]}I\big(\gamma(s)\big)\leq\sigma+\frac{1}{n}$.
    
    First, we set $\gamma(s_{0})\equiv0$ and $\gamma(s_{N})\equiv\mathfrak{e}$. For  $0<i<N$, $\widetilde{\gamma}(s_{i})$ can be approximated in $H^{1}(\Omega)$ by a sequence of functions in $\mathcal{C}$. Therefore, one can find $\gamma(s_{i})\in\mathcal{C}$ such that
    \begin{equation}\label{eq. H1 approx of nonsmooth homotopy}
        \|\gamma(s_{i})-\widetilde{\gamma}(s_{i})\|_{H^{1}(\Omega)}\leq\frac{1}{M\cdot n^{3}}\quad\mbox{and }\max_{0\leq i\leq N}I\big(\gamma(s_{i})\big)\leq\sigma+\frac{1}{M\cdot n^{2}}.
    \end{equation}
    
    \textbf{Step 2. Construction of $\gamma$ at other points.} For $s\in(s_{i},s_{i+1})$, define 
    \begin{equation}\label{def_lambda_gamma}
    \lambda(s)=N\cdot(s-s_{i})  \quad\text{and}\quad   \gamma(s)=(1-\lambda(s))\cdot\gamma(s_{i})+\lambda(s)\cdot\gamma(s_{i+1}).
    \end{equation}
    Clearly, $\gamma$ is a broken line connecting the sample points $\gamma(s_{i})\in\mathcal{C}$, so $\gamma$ must be a homotopy in the $H^{4}$ sense connecting $0$ and $\mathfrak{e}$. Since $\mathcal{C}$ is a convex set, one has $\gamma(s)\in\mathcal{C}$ for all $s\in[0,1]$. In other words, this curve $\gamma$ belongs to $\Upsilon$.

    \textbf{Step 3. Energy estimate.} Assume that $s\in[s_{i},s_{i+1}]$. Decompose
    \begin{equation*}
        I(u)=\int_{\Omega}\frac{|\nabla u|^{2}}{2}dxdy+\int_{\Omega}G(u,y)dxdy=:I_{D}(u)+I_{P}(u).
    \end{equation*}
    Since $I_{D}(\cdot)$ is convex, one has
    \begin{equation}\label{eq. IA convex, estimate good}
        I_{D}\big(\gamma(s)\big)\leq(1-\lambda)I_{D}\big(\gamma(s_{i})\big)+\lambda I_{D}\big(\gamma(s_{i+1})\big),
    \end{equation}
    where $\lambda=\lambda(s)$ is defined in \eqref{def_lambda_gamma}.
    On the other hand, let us set $\alpha=\gamma(s_{i})$, $\beta=\gamma(s_{i+1})$. From the assumption (G3), one has
    \begin{equation*}
        G\Big((1-\lambda)\alpha+\lambda\beta,y\Big)-(1-\lambda)G(\alpha,y)-\lambda G(\beta,y)\leq C(1+|\alpha|+|\beta|)|\alpha-\beta|^{2}.
    \end{equation*}
    Applying the Cauchy-Schwarz inequality gives
    \begin{equation}\label{eq. concave deficit}\begin{aligned}
        &\, -(1-\lambda)I_{P}\big(\gamma(s_{i})\big)-\lambda I_{P}\big(\gamma(s_{i+1})\big)+I_{P}\big(\gamma(s)\big)\\
        \leq&\, C\int_{\Omega}\Big(1+\gamma(s_{i})+\gamma(s_{i+1})\Big)\cdot|\gamma(s_{i})-\gamma(s_{i+1})|^{2}dxdy\\
        \leq& \, C\big(\|\gamma(s_{i})\|_{L^{2}(\Omega)}+\|\gamma(s_{i+1})\|_{L^{2}(\Omega)}\big)\cdot\|\gamma(s_{i})-\gamma(s_{i+1})\|_{L^{4}(\Omega)}^{2}\\
        &\, +C\|\gamma(s_{i})-\gamma(s_{i+1})\|_{L^{2}(\Omega)}^{2},
    \end{aligned}\end{equation}
 where $\lambda=\lambda(s)$ is defined in \eqref{def_lambda_gamma}.
 
    By \eqref{eq. how to choose dense sample points} and \eqref{eq. H1 approx of nonsmooth homotopy}, one has
    \begin{equation*}
        \|\gamma(s_{i})-\gamma(s_{i+1})\|_{H^{1}(\Omega)}\leq\frac{1}{M\cdot n^{2}}.
    \end{equation*}
    Hence it follows from Lemma~\ref{lem. Sobolev in a channel} that
    \begin{equation*}
        \|\gamma(s_{i})-\gamma(s_{i+1})\|_{L^{2}(\Omega)}\leq\frac{\mathfrak{C}(2)}{M\cdot n^{2}}\quad\mbox{and }\|\gamma(s_{i})-\gamma(s_{i+1})\|_{L^{4}(\Omega)}\leq\frac{\mathfrak{C}(4)}{M\cdot n^{2}}.
    \end{equation*}
    Next, by \eqref{eq. H1 approx of nonsmooth homotopy} and Lemma~\ref{lem. Sobolev in a channel}, one has
    \begin{equation*}
        \|\gamma(s_{i})\|_{L^{2}(\Omega)}+\|\gamma(s_{i+1})\|_{L^{2}(\Omega)}\leq2\max_{s\in[0,1]}\|\widetilde{\gamma}(s)\|_{L^{2}(\Omega)}+2\sqrt{\mathfrak{C}(2)}\cdot\frac{1}{n^{3}}\leq2M.
    \end{equation*}
    It then follows from \eqref{eq. concave deficit} that
    \begin{equation*}
        -(1-\lambda)I_{P}\big(\gamma(s_{i})\big)-\lambda I_{P}\big(\gamma(s_{i+1})\big)+I_{P}\big(\gamma(s)\big)\leq\frac{1}{n^{2}}.
    \end{equation*}
    This, together with \eqref{eq. IA convex, estimate good}, yields that
    \begin{equation*}
       I\big(\gamma(s)\big)\leq(1-\lambda)I\big(\gamma(s_{i})\big)+\lambda I\big(\gamma(s_{i+1})\big)+\frac{1}{n^{2}}\leq\sigma+\frac{1}{M\cdot n^{2}}+\frac{1}{n^{2}}\leq\sigma+\frac{1}{n},
    \end{equation*}
    where we have used \eqref{eq. H1 approx of nonsmooth homotopy} again to control $I\big(\gamma(s_{i})\big)$ and $I\big(\gamma(s_{i+1})\big)$. This finishes the proof of Lemma~\ref{lem. rearrange the mountain pass}.
\end{proof}

\subsection{Heat flow algorithm and construction of  a Palais-Smale sequence.}
In this subsection, we construct a Palais-Smale sequence $\{u_{n}\}\subseteq\overline{\mathcal{C}}$. Notice that $L^{2}(\Omega)$ space naturally embeds in the space $H^{-1}(\Omega)$ by the Riesz representation theorem and the Sobolev inequality (see Lemma~\ref{lem. Sobolev in a channel} with $p=2$), it suffices to require
\begin{equation}\label{eq. PS requirement, written in L2}
    u_{n}\in\overline{\mathcal{C}},\quad|I(u_{n})-\sigma|\leq\frac{1}{n},\quad\|\nabla I(u_{n})\|_{L^{2}(\Omega)}\leq\frac{1}{n}.
\end{equation}
Here, $\sigma$ is the min-max value in \eqref{eq. minimal of homotopy}.

The main goal of this subsection is to prove the following lemma.
\begin{lemma}\label{lem. key to prove flexibility, finding the PS sequence}
    There exists a sequence $\{u_{n}\}$ such that \eqref{eq. PS requirement, written in L2} holds.
\end{lemma}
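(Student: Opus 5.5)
We will argue by contradiction, using a deformation of near-optimal paths by the semilinear heat flow $\partial_t u=\Delta u-G'(u,y)=-\nabla I(u)$ (in the $L^2$ sense, by \eqref{eq. L2 way to understand nabla I}) in place of the $H^1$-gradient flow. Suppose \eqref{eq. PS requirement, written in L2} fails for some $n$. Then there is $\epsilon>0$ such that every $u\in\overline{\mathcal{C}}$ with $|I(u)-\sigma|\le\epsilon$ satisfies $\|\nabla I(u)\|_{L^2(\Omega)}\ge\epsilon$. By Lemma~\ref{lem. rearrange the mountain pass} we fix $\gamma\in\Upsilon$ with $\max_s I(\gamma(s))\le\sigma+\eta$, where $\eta\ll\epsilon$. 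We then deform it by $\gamma_t(s):=S(t)\gamma(s)$, with $S(t)$ the heat semigroup from Appendix~\ref{AppendixB}.

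\textbf{Step 1: properties of the flow.} From Appendix~\ref{AppendixB} we take local well-posedness in $H^1_0\cap H^4$ for data in $\mathcal{C}$, with a time of existence uniform on the compact family $\{\gamma(s)\}$ (compact in $H^4$ by the uniform continuity built into \eqref{eq. define Gamma}), and continuity of $s\mapsto\gamma_t(s)$ in $H^1$. We also need $\overline{\mathcal{C}}$ to be invariant. Nonnegativity follows from the comparison principle, since $G'(0,y)=0$. Evenness in $x$ follows from uniqueness. Monotonicity, $\partial_x u\le 0$ for $x\ge0$, follows because $w=\partial_x u$ solves the linear equation $w_t=\Delta w-G''(u,y)w$, vanishes at $x=0$ by oddness, and vanishes on $y=\pm1$, so the maximum principle on the half-channel preserves $w\le0$. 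This invariance is the reason for using the heat flow rather than the $H^1$-gradient flow. The endpoints are handled by a cutoff in time depending on $s$: we flow only those points where $I(\gamma(s))\ge\sigma-\epsilon$. Since $I(0)=0$ and $I(\mathfrak e)<0<\sigma-\epsilon$ once $\epsilon<\sigma/2$, the endpoints stay fixed. Concretely, we set $\gamma_t(s)=S(t\rho(s))\gamma(s)$, where $\rho$ is a continuous cutoff equal to $0$ near the set $\{I(\gamma(s))\le\sigma-\epsilon\}$.

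\textbf{Step 2: energy decay.} Along the flow,
\[
\frac{d}{dt}I(u(t))=-\|\partial_t u\|_{L^2}^2=-\|\nabla I(u)\|_{L^2}^2 .
\]
This identity is justified by the $H^4$ regularity. Hence $I$ is nonincreasing, and while $I(u(t))\in[\sigma-\epsilon,\sigma+\eta]$ it decreases at rate at least $\epsilon^2$. After time $T=2\eta/\epsilon^2$, every point of the deformed path therefore has energy below $\sigma-\eta$. The deformed path lies in $\overline{\Upsilon}$, so this contradicts the definition \eqref{eq. minimal of homotopy} of $\sigma$. It follows that $u_n$ exists with $|I(u_n)-\sigma|\le 1/n$ and $\|\nabla I(u_n)\|_{L^2}\le1/n$. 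If $\mathcal{C}$ is needed rather than $\overline{\mathcal{C}}$, we approximate by density.

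\textbf{Main obstacle.} The hard step is the uniform existence of the flow up to time $T$ on the whole path. The nonlinearity is $-G'(u,y)\sim u^2$, which may blow up in finite time for large data, and $\Omega$ is unbounded. We would first obtain $H^1$ bounds from the monotonicity of $I$ combined with the structure (G2), which makes $G$ very negative for large $u$, so that large $u$ costs no energy bound. Because $T$ can be taken small when $\eta$ is small, it may suffice to use only short-time existence on the compact set $\gamma([0,1])\subset H^4$. This suggests choosing $\eta$ after fixing $\epsilon$ and the uniform existence time $T_0$, so that $2\eta/\epsilon^2<T_0$. The uniform $T_0$ must come from $\sup_s\|\gamma(s)\|_{H^4}$, which depends on $\gamma$ and hence on $\eta$. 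To break this circularity we would derive an existence time depending only on an $H^1$ bound (for instance $H^1\to L^\infty$ smoothing in two dimensions with a quadratic nonlinearity). The $H^1$ norm along near-optimal paths can be bounded uniformly by a compactness argument near the level $\sigma$, which remains delicate.
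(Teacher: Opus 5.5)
There is a genuine gap, and it sits exactly at the step you flag. Your overall scheme matches the paper's: you deform near-optimal paths by the heat flow $\partial_t u=-\nabla I(u)$ rather than the $H^1$-gradient flow, use the invariance of $\overline{\mathcal{C}}$, freeze the endpoints with a cutoff $\rho$, and use $\frac{d}{dt}I=-\|\nabla I\|_{L^2}^2$.

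\textbf{Where it breaks.} Flowing for the fixed time $T\rho(s)$, with $T=2\eta/\epsilon^2$, requires existence on $[0,T]$ for every $s$. The only existence time you have is governed by $\sup_s\|\gamma(s)\|_{L^\infty}$, which is not controlled as $\eta\to0$.

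Your proposed remedy, a uniform $H^1$ bound on the part of near-optimal paths where $I\ge\sigma-\epsilon$, is not available. Since $I$ is unbounded below, consider long plateaus $a_L(x)\,k\,b(y)$ as in Lemma \ref{lem. mountain pass end point}, with $k$ tuned so that the energy per unit length is nearly zero. These lie in $\mathcal{C}$, have $I=\sigma$, and have $H^1$ norm at least of order $\sqrt{L}$. An excursion from $\mathfrak{e}$ to such a point (first lengthen the plateau, then lower $k$) can be appended to any near-optimal path without exceeding level $\sigma$. The only $H^1$ bound in the paper (Step 1 of Lemma \ref{lem. Palais-Smale}) needs $\nabla I$ small, which is exactly what your contradiction hypothesis excludes. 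Moreover, with a fixed flow time, a trajectory that exits the window $[\sigma-\epsilon,\sigma+\eta]$ from below loses its speed bound, and $I$ may run to $-\infty$ along a blowing-up solution.

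\textbf{The missing idea.} The paper supplies it through its stopping rules (Definition \ref{def. Heat flow algorithm}) and Lemma \ref{lem. no blow up in the algorithm}. Flow each point only while its energy decays at a definite rate, and use that rate to convert the energy budget into an $L^2$ displacement bound that does not see the size of $\gamma$. Before the stopping time,
\[
\|u(t)-\gamma(s)\|_{L^2(\Omega)}\le\int_0^t\|\partial_t u\|_{L^2(\Omega)}\,d\tau\le\frac{1}{\epsilon}\int_0^t\|\partial_t u\|_{L^2(\Omega)}^2\,d\tau=\frac{I(\gamma(s))-I(u(t))}{\epsilon}\le\frac{2\eta}{\epsilon}.
\]
Now apply Lemma \ref{lem. epsilon regularity} to $w=u-\gamma(s)$. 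Its smallness threshold depends only on $C_0$, not on $M$; here $M$ depends on $\gamma(s)$ through $\Delta\gamma(s)$ and $\gamma(s)^2$. This gives a finite $L^\infty$ bound, so no blow-up occurs before the stopping time. Since $\eta$ can now be chosen depending only on $\epsilon$ and $C_0$, the circularity disappears.

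\textbf{The cost.} The flow time becomes $s$-dependent and must be made continuous. The paper does this with the lower semicontinuous stop time, local times $\tau_1(s^*)\rho(s)$, and a partition of unity (Lemma \ref{lem. it is possible that (K2) is the stopping reason}). In your contradiction setting, the first hitting time of the level $\sigma-\eta$ would do. It is at most $2\eta/\epsilon^2\le 1$. By the stability statement in Lemma \ref{lem. appendix well-posedness}(2), it depends continuously on $s$, because the level is crossed transversally: the decay rate there is at least $\epsilon^2$.
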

In fact, Lemma~\ref{lem. key to prove flexibility, finding the PS sequence} immediately implies the first part of Proposition~\ref{prop. Existence and convergence of Palais-Smale sequence}.
\begin{proof}[Proof for Part (1) of Proposition~\ref{prop. Existence and convergence of Palais-Smale sequence}]
    Assuming that Lemma~\ref{lem. key to prove flexibility, finding the PS sequence} holds, there exists a sequence $\{u_{n}\}$ satisfying \eqref{eq. PS requirement, written in L2}. Using \eqref{eq. L2 way to understand nabla I} and Lemma~\ref{lem. Sobolev in a channel} with $p=2$, it follows from the Riesz representation theorem that the sequence $\{u_{n}\}$ also satisfies $\|\nabla I(u_{n})\|_{H^{-1}(\Omega)}\to0$. Hence, the first part of Proposition~\ref{prop. Existence and convergence of Palais-Smale sequence} is proved.
\end{proof}

To find the function $u_{n}$ (where $n$ is sufficiently large) required in Lemma~\ref{lem. key to prove flexibility, finding the PS sequence}, we let $\gamma\in\Gamma$ be a homotopy connecting $0$ and $\mathfrak{e}$, which satisfies the inequality
\begin{equation}\label{eq. homotopy before heat flow deformation}
    \max_{s\in[0,1]}I\big(\gamma(s)\big)\leq\sigma+\frac{1}{n^{4}}.
\end{equation}
The existence of such a homotopy is guaranteed by Lemma~\ref{lem. rearrange the mountain pass}. To guarantee the last inequality of \eqref{eq. PS requirement, written in L2}, we use the following heat flow algorithm to deform the homotopy $\gamma$.

\begin{definition}[Heat flow algorithm]\label{def. Heat flow algorithm}
    Let $n$ be sufficiently large, and let $\gamma\in\Gamma$ be a homotopy that satisfies \eqref{eq. homotopy before heat flow deformation}. For each $s\in[0,1]$, let $v$ be the solution to the problem:
    \begin{equation*}
            \left\{\begin{aligned}
                &\frac{\partial v}{\partial t}=\Delta v-G'(v,y)=:\mathcal{L}v,\\
                &v(t,\cdot)\in H^{1}_{0}(\Omega),\\
                &v(0,\cdot)=\gamma(s;\cdot).
            \end{aligned}\right.
        \end{equation*}
    Denote $e^{t\mathcal{L}}\gamma(s):=v(t,\cdot)$. The stop time of this algorithm, denoted by $\overline{t}$, is the supremum of all possible $T\geq0$ satisfying all three requirements below.
    \begin{itemize}
        \item[(K1)] $\displaystyle I\big(\gamma(s)\big)-I\big(e^{t\mathcal{L}}\gamma(s)\big)<\frac{1}{n^{2}}$ for all $t\in[0,T)$.
        \item[(K2)] $\displaystyle-\frac{d}{dt}I\big(e^{t\mathcal{L}}\gamma(s)\big)>\frac{1}{n^{2}}$ for all $t\in[0,T)$.
        \item[(K3)] $\displaystyle\sup_{t\in[0,T)}\|e^{t\mathcal{L}}\gamma(s)\|_{L^{\infty}(\Omega)}$ is finite (this value is allowed to depend on $T$).
    \end{itemize}
    Here, the set $[0,0)$ is viewed as the empty set.
\end{definition}
\begin{remark}\label{rmk. computations laws in heat flow algorithm}
    Using the notation in Definition~\ref{def. Heat flow algorithm} yields
    \begin{equation*}
        \|\nabla I(v)\|_{L^{2}(\Omega)}=\|\mathcal{L}v\|_{L^{2}(\Omega)}\quad\mbox{and }-\frac{d}{dt}I\big(e^{t\mathcal{L}}\gamma(s)\big)=\|\mathcal{L}e^{t\mathcal{L}}\gamma(s)\|_{L^{2}(\Omega)}^{2}.
    \end{equation*}
\end{remark}
\begin{remark}
    Since $\gamma(s)\in\mathcal{C}$, in particular, $\gamma(s)=\Delta\gamma(s,\cdot)=0$ on $\partial\Omega=\mathbb{R}\times\{\pm1\}$, we infer from Lemma~\ref{lem. appendix well-posedness} that the heat flow algorithm in Definition~\ref{def. Heat flow algorithm} exists at least for a short time, with $e^{t\mathcal{L}}\gamma(s)\in\overline{\mathcal{C}}$. Furthermore, it follows from the proof of Lemma~\ref{lem. appendix well-posedness} and the finiteness of $\displaystyle\sup_{t\in[0,T]}\|e^{t\mathcal{L}}\gamma(s)\|_{L^{\infty}(\Omega)}$ that $e^{t\mathcal{L}}\gamma(s)$ is smooth in $\Omega$ for all $t\in[0,\overline{t}]$.
\end{remark}
\begin{remark}\label{rmk. stop reason trichotomy}
    Corresponding to (K1)-(K3) in Definition~\ref{def. Heat flow algorithm}, at $\overline{t}$, the reason that the heat flow algorithm cannot proceed must be one of the following.
    \begin{itemize}
        \item[(S1)] Total energy decay is too big, i.e., $\displaystyle I\big(\gamma(s)\big)-I\big(e^{\overline{t}\mathcal{L}}\gamma(s)\big)\geq\frac{1}{n^{2}}$.
        \item[(S2)] Energy decay speed is too small, i.e., $\displaystyle\lim_{\epsilon\to0+}\inf_{t\in[\overline{t},\overline{t}+\epsilon]}\Big\{-\frac{d}{dt}I\big(e^{t\mathcal{L}}\gamma(s)\big)\Big\}\leq\frac{1}{n^{2}}$.
        \item[(S3)] The heat flow blows up. In particular, $\displaystyle\sup_{[0,\overline{t}]}\|e^{t\mathcal{L}}\gamma(s)\|_{L^{\infty}(\Omega)}=\infty$.
    \end{itemize}
\end{remark}

We have the criterion (K3) in Definition~\ref{def. Heat flow algorithm} to ensure that the heat flow $v=e^{t\mathcal{L}}\gamma(s)$ is well-defined. In fact, the following lemma indicates that the criterion (K3) is indeed redundant as long as $n$ is sufficiently large.
\begin{lemma}\label{lem. no blow up in the algorithm}
    Let $n$ be sufficiently large (independent of the curve $\gamma\in\Gamma$ and $s\in[0,1]$). For any $\gamma\in\Gamma$ and $s\in[0,1]$, the heat flow algorithm in Definition~\ref{def. Heat flow algorithm} does not blow up before the stop time $\overline{t}$, i.e., $\displaystyle\sup_{t\in[0,\overline{t}]}\|e^{t\mathcal{L}}\gamma(s)\|_{L^{\infty}(\Omega)}<\infty$.
\end{lemma}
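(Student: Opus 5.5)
The plan is to show that, before the stop time, the constraints (K1)--(K2) give uniform-in-time control of $v(t)=e^{t\mathcal{L}}\gamma(s)$ in $L^{2}$ and then in $H^{1}$. Since the nonlinearity is only quadratic, this is $L^{p}$-subcritical in two dimensions, and a standard parabolic bootstrap then gives the $L^{\infty}$ bound.

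\textbf{Step 1: bounded time and $L^2$ control.} For $t<\overline{t}$, Remark~\ref{rmk. computations laws in heat flow algorithm} gives the energy identity
\[
I(\gamma(s))-I(v(t))=\int_{0}^{t}\|\partial_{\tau}v\|_{L^{2}(\Omega)}^{2}\,d\tau .
\]
By (K2) the integrand is $>\frac{1}{n^{2}}$, and by (K1) the left-hand side is $<\frac{1}{n^{2}}$. Hence $\overline{t}\le 1$ whenever $\overline{t}>0$. Cauchy--Schwarz in time then gives
\[
\|v(t)-\gamma(s)\|_{L^{2}}\le \sqrt{t}\Big(\int_{0}^{t}\|\partial_\tau v\|_{L^2}^{2}\,d\tau\Big)^{1/2}\le \frac1n\le 1 .
\]
Thus $\|v(t)\|_{L^{2}}\le M:=\max_{s}\|\gamma(s)\|_{L^{2}}+1$, which is finite because $\gamma$ is continuous on $[0,1]$. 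All of this holds for every $n\ge1$.

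\textbf{Step 2: $H^1$ control.} From (K1) and \eqref{eq. homotopy before heat flow deformation} we have $I(v(t))\le \sigma+1$. Since $v(t)\in\overline{\mathcal{C}}$ (so $v\ge0$), (G1)--(G3) give $G(v,y)\ge -C_{1}v^{3}$. I will use a Gagliardo--Nirenberg inequality on the channel, $\|v\|_{L^{3}}^{3}\le C\|v\|_{L^{2}}^{2}\|v\|_{H^{1}}$. This follows from Lemma~\ref{lem. Sobolev in a channel} or from a Ladyzhenskaya-type argument with the zero boundary data in $y$. Combining these gives
\[
\tfrac12\|\nabla v\|_{L^{2}}^{2}\le \sigma+1+CM^{2}\big(\|\nabla v\|_{L^{2}}+M\big).
\]
Absorbing the linear term yields $\sup_{t<\overline{t}}\|v(t)\|_{H^{1}}\le K(M,\sigma)$.

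\textbf{Step 3: bootstrap to $L^\infty$; the main obstacle.} This step is the main technical point: turning the $H^1$ bound into an $L^\infty$ bound uniformly up to $\overline{t}$, even though the nonlinearity $-G'(v)\sim +v^{2}$ is of focusing type. By Lemma~\ref{lem. Sobolev in a channel}, $\|v(t)\|_{L^{p}}\le C_{p}K$ for every $p<\infty$, and by (G3), $|G'(v)|\le C(|v|+|v|^{2})$. I will write Duhamel's formula on a window $[t_0,t_0+1]$ with $t_0=\max(t-1,0)$. The contribution $e^{(t-t_0)\Delta}v(t_0)$ is bounded in $L^\infty$ by $\|v(0)\|_{L^\infty}$ if $t_0=0$, and by $C\|v(t_0)\|_{L^2}$ otherwise. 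For the forcing term I will use the two-dimensional $L^{p}\to L^{\infty}$ smoothing of the Dirichlet heat semigroup on $\Omega$, $\|e^{\tau\Delta}f\|_{L^\infty}\le C\tau^{-1/p}\|f\|_{L^{p}}$ for $\tau\le1$. Taking $p=4$ and $f=G'(v)\in L^{4}$ uniformly (since $v\in L^{8}$ uniformly), I obtain
\[
\|v(t)\|_{L^{\infty}}\le C\Big(\|\gamma(s)\|_{L^{\infty}}+M+\int_{0}^{1}\tau^{-1/4}\,d\tau\;\sup_{t<\overline t}\|G'(v)\|_{L^{4}}\Big)<\infty .
\]
This gives $\sup_{t\in[0,\overline{t}]}\|v(t)\|_{L^\infty}<\infty$; extending to the endpoint $\overline t$ uses the continuation statement of Lemma~\ref{lem. appendix well-posedness}. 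Hence alternative (S3) cannot occur, and (K3) is redundant. The bound depends on $\gamma$, which is allowed, while the threshold on $n$ is uniform: $n\ge1$ suffices.
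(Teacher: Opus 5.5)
Your proof is correct, and it takes a different route from the paper's after Step 1.

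Step 1 is essentially the paper's. The paper gets $\|e^{\tau\mathcal{L}}\gamma(s)-\gamma(s)\|_{L^2}\le 1/n$ from $\inf_t\|\mathcal{L}e^{t\mathcal{L}}\gamma(s)\|_{L^2}\ge 1/n$ rather than from Cauchy--Schwarz in time, which makes no difference.

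From there the paper never uses the energy upper bound or any $H^1$ estimate. It takes $w=e^{t\mathcal{L}}\gamma(s)-\gamma(s)$, which satisfies $|(\partial_t-\Delta)w|\le C_0w^2+\alpha|w|+\beta$ with $\alpha,\beta$ built from $\gamma(s)$ and $\Delta\gamma(s)$. It then applies the De Giorgi-type $\epsilon$-regularity Lemma~\ref{lem. epsilon regularity} to $w_\pm$. The $L^2$ smallness $\|w\|_{L^2}\le 1/n\le\epsilon(C_0)$ is exactly where ``$n$ sufficiently large, uniformly in $\gamma$ and $s$'' enters.

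You instead use the variational structure together with the two-dimensional subcriticality of the quadratic nonlinearity. An energy bound plus $L^2$ control gives $H^1$ control, hence $L^p$ bounds for all $p<\infty$. Duhamel with the $L^p\to L^\infty$ smoothing of the Dirichlet heat semigroup (dominated by the free kernel) then closes the $L^\infty$ bound. The windowing with $t_0>0$ is vacuous, since $\overline{t}\le1$.

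What your route buys is that no smallness is needed, so the lemma holds for every $n\ge1$, and only $\|\gamma(s)\|_{L^\infty}$ enters, not $\Delta\gamma(s)$. The price is importing standard tools the paper avoids with its self-contained De Giorgi lemma: the Duhamel representation on the channel, heat-kernel smoothing bounds, and a Gagliardo--Nirenberg inequality.

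Two small corrections:
\begin{itemize}
\item The upper bound $I(v(t))\le I(\gamma(s))\le\sigma+n^{-4}$ comes from monotonicity of $I$ along the flow (Remark~\ref{rmk. computations laws in heat flow algorithm}), not from (K1). (K1) gives the lower bound.
\item The inequality $\|v\|_{L^3}^3\le C\|v\|_{L^2}^2\|v\|_{H^1}$ does not follow from Lemma~\ref{lem. Sobolev in a channel} alone. That lemma only yields $\int|v|^3\le\|v\|_{L^2}\,\mathfrak{C}(4)^2\|\nabla v\|_{L^2}^2$, which is quadratic in the gradient and cannot be absorbed. It does follow from the Ladyzhenskaya inequality $\|v\|_{L^4}^2\le C\|v\|_{L^2}\|\nabla v\|_{L^2}$, derived inside the proof of Lemma~\ref{lem. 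Sobolev with measure}, via $\int|v|^3\le\|v\|_{L^2}\|v\|_{L^4}^2$. So your alternative ``Ladyzhenskaya-type'' justification is the one to keep.
\end{itemize}
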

\begin{proof}
    Suppose that the algorithm stops at $\overline{t}$ because the heat flow blows up, then criteria (K1) and (K2) must hold for all $t\in[0,\overline{t}]$, i.e.,
    \begin{equation*}
        -\frac{d}{dt}I\big(e^{t\mathcal{L}}\gamma(s)\big)\geq\frac{1}{n^{2}}\mbox{ and }I\big(\gamma(s)\big)-I\big(e^{t\mathcal{L}}\gamma(s)\big)\leq\frac{1}{n^{2}}\quad\mbox{for all }t\in[0,\overline{t}].
    \end{equation*}
    This immediately implies $\overline{t}\leq1$. Furthermore, it follows from Remark~\ref{rmk. computations laws in heat flow algorithm} that
    \begin{equation*}
        \min_{t\in[0,\overline{t}]}\|\mathcal{L}e^{t\mathcal{L}}\gamma_{s}\|_{L^{2}(\Omega)}=\min_{t\in[0,\overline{t}]}\sqrt{-\frac{d}{dt}I(e^{t\mathcal{L}}\gamma_{s})}\geq\frac{1}{n}.
    \end{equation*}
Consequently, for every $\tau\in(0,\overline{t})$
    \begin{align*}
    \frac{1}{n^{2}}\geq&\, I(\gamma_{s})-I(e^{\tau\mathcal{L}}\gamma_{s})=\int_{0}^{\tau}-\frac{d}{dt}I\big(e^{t\mathcal{L}}\gamma(s)\big)dt=\int_{0}^{\tau}\|\mathcal{L}e^{t\mathcal{L}}\gamma(s)\|_{L^{2}(\Omega)}^{2}dt\\
\geq&\,\inf_{t\in[0,\overline{t}]}\|\mathcal{L}e^{t\mathcal{L}}\gamma(s)\|_{L^{2}(\Omega)}\int_{0}^{\tau}\|\mathcal{L}e^{t\mathcal{L}}\gamma(s)\|_{L^{2}(\Omega)}dt\\
    \geq&\,\frac{1}{n}\int_{0}^{\tau}\Big\|\frac{\partial e^{t\mathcal{L}}\gamma(s)}{\partial t}\Big\|_{L^{2}(\Omega)}dt\geq\frac{1}{n}\cdot\|e^{\tau\mathcal{L}}\gamma(s)-\gamma(s)\|_{L^{2}(\Omega)}.
\end{align*}
Therefore, we have
\begin{equation}\label{eq. L2 small difference with the initial data}
    \|e^{\tau\mathcal{L}}\gamma(s)-\gamma(s)\|_{L^{2}(\Omega)}\leq\frac{1}{n}\quad\mbox{for all }\tau\in[0,\overline{t}].
\end{equation}
It follows from Lemma~\ref{lem. epsilon regularity} that the difference $e^{\tau\mathcal{L}}\gamma(s)-\gamma(s)$ is bounded in $L^{\infty}(\Omega)$. To see this, let $w(t,\cdot):=e^{t\mathcal{L}}\gamma(s;\cdot)-\gamma(s;\cdot)$. The straightforward computations yield
\begin{equation*}
    (\partial_{t}-\Delta)w=\Delta\gamma-G'(w+\gamma,y).
\end{equation*}
Using the assumptions (G1)-(G3), one has
\begin{equation*}
    \left\{\begin{aligned}
        &\big|(\partial_{t}-\Delta)w\big|\leq C_{0}w^{2}+\alpha(x,y)|w|+\beta(x,y),\\
        &w(0,\cdot)\equiv0,\quad w(t,\cdot)\in H^{1}_{0}(\Omega),
    \end{aligned}\right.
\end{equation*}
where
\begin{equation*}
    \alpha(\cdot)=C|\gamma(s;\cdot)|\mbox{ and }\beta(\cdot)=C\Big(|\Delta\gamma(s;\cdot)|+\gamma^{2}(s;\cdot)+1\Big).
\end{equation*}
As $\overline{t}\leq1$, and by \eqref{eq. L2 small difference with the initial data} one has
\begin{equation*}
    \|w(\tau,\cdot)\|_{L^{2}(\Omega)}=\displaystyle\Big\|e^{\tau\mathcal{L}}\gamma(s)-\gamma(s)\Big\|_{L^{2}(\Omega)}\leq\frac{1}{n}\quad\mbox{for all }\tau\leq\overline{t}.
\end{equation*}
For sufficiently large $n$, one can apply Lemma~\ref{lem. epsilon regularity} to $w_{\pm}(t,\cdot)$, and obtain that
\begin{equation*}
    \|w(t,\cdot)\|_{L^{\infty}(\Omega)}\leq\sup_{\Omega}\Big(|\alpha(x,y)|+|\beta(x,y)|\Big)<\infty,\quad\mbox{for all }t\leq\overline{t}.
\end{equation*}
Given the $L^{\infty}$ bound of $w$, we then have the uniform (independent of $\tau$) bound of $L^{\infty}$ norm of $e^{t\mathcal{L}}\gamma(s)$ for $t\in[0,\tau]$ for every $\tau\leq\overline{t}$. With this information, we can repeat the proof of Lemma~\ref{lem. appendix well-posedness}, and show the uniform $H^{2}(\Omega)$ estimate for $e^{t\mathcal{L}}\gamma(s)$ for $t\in[0,\overline{t}]$. It follows from the bootstrap argument that $w(t,\cdot)\in C^{k}(\Omega)$ for $t\in[0,\overline{t}]$.

If we choose $e^{\overline{t}\mathcal{L}}\gamma(s)$ as the initial data and apply Lemma~\ref{lem. appendix well-posedness} again, then $e^{t\mathcal{L}}\gamma(s)$ remains classical and smooth even slightly beyond $\overline{t}$. This contradicts the assumption that the heat flow blows up. Hence the proof of Lemma~\ref{lem. no blow up in the algorithm} is completed.
\end{proof}

Next, we let $n$ be sufficiently large, so that the blow-up of heat equation deos not occur. the following lemma is essential in proving Lemma~\ref{lem. key to prove flexibility, finding the PS sequence}.
\begin{lemma}\label{lem. it is possible that (K2) is the stopping reason}
    Let $n$ be sufficiently large, and let $\gamma\in\Upsilon$ be a homotopy curve that satisfies \eqref{eq. homotopy before heat flow deformation}. Then there exists an $s\in[0,1]$ such that
    \begin{equation}\label{eq. which element on the curve do we perform heat flow?}
        |I\big(\gamma(s)\big)-\sigma|\leq\frac{1}{n^{4}},
    \end{equation}
    and the heat flow $e^{t\mathcal{L}}\gamma(s)$ stops at $\overline{t}$ with $\displaystyle\lim_{\epsilon\to0+}\inf_{t\in[\overline{t},\overline{t}+\epsilon]}\Big\{-\frac{d}{dt}I\big(e^{t\mathcal{L}}\gamma(s)\big)\Big\}\leq\frac{1}{n^{2}}$, i.e. (S2) is the reason for the heat flow to stop. 
\end{lemma}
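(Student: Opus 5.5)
We argue by contradiction, using the classical deformation argument with the heat flow in place of the gradient flow. Suppose that for every $s\in[0,1]$ with $|I(\gamma(s))-\sigma|\leq n^{-4}$ the algorithm does not stop for reason (S2). By Lemma~\ref{lem. no blow up in the algorithm}, (S3) never occurs once $n$ is large. So at each such $s$ the stop reason is (S1), i.e. $I(\gamma(s))-I(e^{\overline{t}(s)\mathcal{L}}\gamma(s))\geq n^{-2}$. We then deform $\gamma$ into a new admissible curve $\widetilde{\gamma}\in\overline{\Upsilon}$ with $\max_s I(\widetilde\gamma(s))<\sigma$. This contradicts the definition \eqref{eq. minimal of homotopy} of $\sigma$.

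\textbf{Step 1 (open set of high points).} Let $S:=\{s\in[0,1]: I(\gamma(s))>\sigma-n^{-4}\}$. This set is relatively open. Its closure avoids $s=0,1$, because $I(0)=0<\sigma_0\le\sigma$ and $I(\mathfrak{e})\leq -1$. By \eqref{eq. homotopy before heat flow deformation}, every $s\in \overline S$ satisfies \eqref{eq. which element on the curve do we perform heat flow?}. Under the contradiction hypothesis, for $s\in\overline S$ the flow stops by (S1). Moreover, on $[0,\overline t(s))$ the decay rate exceeds $n^{-2}$ and the total decay stays below $n^{-2}$, so $\overline t(s)\le 1$.

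\textbf{Step 2 (uniform stopping time).} Fix $T:=1$. For $s\in\overline S$, we would like $I(e^{t\mathcal{L}}\gamma(s))\leq I(\gamma(s))-n^{-2}$ for all $t\geq \overline t(s)$, and this follows from monotonicity of $I$ along the flow. That gives $I(e^{T\mathcal{L}}\gamma(s))\le\sigma+n^{-4}-n^{-2}<\sigma-n^{-4}$ for $s\in\overline S$. This requires the flow to exist globally on $[0,1]$ without blow-up past $\overline t(s)$. I would handle this by flowing only up to $\min(\overline t(s)+\epsilon,1)$, or by using a cutoff time $\tau(s)$ chosen continuously: the map $s\mapsto \overline t(s)$ need not be continuous, but the set where the energy has dropped by $n^{-2}$ is open in $(s,t)$. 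Compactness of $\overline S$ then yields a continuous $\tau:[0,1]\to[0,1]$ with $\tau=0$ near $0,1$ and with $I(e^{\tau(s)\mathcal{L}}\gamma(s))<\sigma-n^{-4}/2$ on $\overline S$. I would build $\tau$ via a finite cover and a partition of unity, using the Lemma~\ref{lem. appendix well-posedness} existence interval around each point.

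\textbf{Step 3 (admissibility).} Set $\widetilde\gamma(s)=e^{\tau(s)\mathcal{L}}\gamma(s)$. The endpoints are unchanged. Each $\widetilde\gamma(s)\in\overline{\mathcal{C}}$ by the invariance in Lemma~\ref{lem. appendix well-posedness}. Continuity in $H^1$ follows from continuous dependence of the heat flow on data in $H^4$ and on time, which holds because $\gamma$ is $H^4$-continuous and the $L^\infty$ bounds are uniform. Off $S$ the energy only decreases, so it stays $\le\sigma-n^{-4}$; on $S$ it is $<\sigma-n^{-4}/2$ by Step 2. Hence $\max_s I(\widetilde\gamma(s))<\sigma$, which is the contradiction.

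The main obstacle is Step 2–3: getting continuity of the deformed path. This means joint continuity of $(s,t)\mapsto e^{t\mathcal{L}}\gamma(s)$ in $H^1$ on a region avoiding blow-up, together with a uniform lower bound on existence time over the compact set $\gamma([0,1])\subset H^4$. I would obtain that bound by rerunning the local well-posedness estimates with constants depending only on $\sup_s\|\gamma(s)\|_{H^4}$. Beyond the stopping time, existence is guaranteed for a short time by Lemma~\ref{lem. appendix well-posedness} applied at $e^{\overline t\mathcal{L}}\gamma(s)$. In that time the energy has already dropped by $n^{-2}$ and keeps decreasing, which is all Step 2 needs.
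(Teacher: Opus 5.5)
Your proposal is correct and follows the paper's argument: argue by contradiction, note that every near-maximal point must stop by (S1), build a continuous deformation time from a finite subcover and a partition of unity (vanishing near the endpoints and off the near-maximal set), and obtain a curve in $\overline{\Upsilon}$ whose maximum energy is strictly below $\sigma$. The only difference is bookkeeping. The paper flows each $\gamma(s)$ only up to the intermediate time $\tau_1(s^*)$, where the energy drop is $n^{-3}$ and which lies strictly before the stop time, and it uses lower semicontinuity of the stop time to guarantee existence for nearby $s$. You instead flow up to $\overline t(s^*)$ itself, so for nearby data you need existence up to that time; this comes from the no-blow-up lemma at $s^*$ together with the stability in Part~(2) of Lemma~\ref{lem. appendix well-posedness}, not from the uniform short existence time over $\gamma([0,1])$ you mention, which by itself need not reach $\overline t(s^*)$.
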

Assume that Lemma~\ref{lem. it is possible that (K2) is the stopping reason} holds, we first prove Lemma~\ref{lem. key to prove flexibility, finding the PS sequence}.
\begin{proof}[Proof of Lemma~\ref{lem. key to prove flexibility, finding the PS sequence}]
    Assume that $s$ is the one appeared in Lemma~\ref{lem. it is possible that (K2) is the stopping reason}, and let $\overline{t}$ be the corresponding stop time of the heat flow, then
    \begin{equation}\label{eq. PS requirement, check 1}
        \lim_{\epsilon\to0+}\inf_{t\in[\overline{t},\overline{t}+\epsilon]}\|\nabla I\big(e^{t\mathcal{L}}\gamma(s)\big)\|_{L^{2}(\Omega)}\leq\sqrt{-\frac{d}{dt}I\big(e^{t\mathcal{L}}\gamma(s)\big)}\leq\frac{1}{n}.
    \end{equation}
    As $n\gg1$, the heat flow remains smooth for $t\in[0,\overline{t}+\epsilon]$.
    
    Since the reason for heat flow to stop in Lemma~\ref{lem. it is possible that (K2) is the stopping reason} is (S2) instead of (S1), we have:
    \begin{equation*}
        \lim_{t\to\overline{t}+}\Big\{I\big(\gamma(s)\big)-I\big(e^{t\mathcal{L}}\gamma(s)\big)\Big\}=I\big(\gamma(s)\big)-I\big(e^{\overline{t}\mathcal{L}}\gamma(s)\big)\leq\frac{1}{n^{2}}
    \end{equation*}
    Combining this estimate with the assumption \eqref{eq. which element on the curve do we perform heat flow?} yields
    \begin{equation}\label{eq. PS requirement, check 3}
        \sigma-\frac{1}{n^{4}}-\frac{1}{n^{2}}\leq\lim_{t\to\overline{t}+}I\big(e^{t\mathcal{L}}\gamma(s)\big)\leq\sigma+\frac{1}{n^{4}},
    \end{equation}
    where the monotone decreasing property of $I(e^{t\mathcal{L}}\gamma(s)$ for $t\in (0,\bar{t}]$ has been used. 
    
    Finally, it follows from Part (3) of Lemma~\ref{lem. appendix well-posedness} that $e^{t\mathcal{L}}\gamma(s)\in\overline{\mathcal{C}}$. This, together with \eqref{eq. PS requirement, check 1} and \eqref{eq. PS requirement, check 3}, implies that $u_{n}:=e^{t\mathcal{L}}\gamma(s)$ satisfies \eqref{eq. PS requirement, written in L2} for some $t\in[\overline{t},\overline{t}+\epsilon]$.
\end{proof}
The rest of this section is devoted to the proof of Lemma~\ref{lem. it is possible that (K2) is the stopping reason}.
\begin{proof}[Proof of Lemma~\ref{lem. it is possible that (K2) is the stopping reason}]
    We argue by contradiction. Suppose that there does not exist an $s\in[0,1]$ satisfying the requirements in Lemma~\ref{lem. it is possible that (K2) is the stopping reason}.
    
    \textbf{Step 1.} F all $s\in[0,1]$, let $\tau(s)$ be the stop time $\overline{t}$ of the heat flow algorithm $e^{t\mathcal{L}}\gamma(s)$ (see Definition~\ref{def. Heat flow algorithm}) for each $\gamma(s)$. For sufficiently large $n$, it follows from Lemma~\ref{lem. no blow up in the algorithm} that the heat flow $e^{t\mathcal{L}}\gamma(s)$ does not blow up before the time $\tau(s)$. Furthermore, it follows from Part (2) of Lemma~\ref{lem. appendix well-posedness} that for each $s\in[0,1]$, the heat flow is $H^{1}(\Omega)\cap L^{\infty}(\Omega)$-stable with respect to the initial data. In particular, the function $\tau(s)$ must be lower semi-continuous.

    Denote
    \begin{equation*}
        \mathcal{S}:=\Big\{s\in[0,1]\Big|\,|I\big(\gamma(s)\big)-\sigma|\leq\frac{1}{n^{4}}\Big\}.
    \end{equation*}
    Clearly, $\mathcal{S}$ is a compact subset of $[0,1]$, and neither $0$ nor $1$ belongs to $\mathcal{S}$.
    If the contradiction assumption holds, then it follows from Remark~\ref{rmk. stop reason trichotomy} that
    \begin{equation*}
        I\big(\gamma(s)\big)-I\big(e^{\tau(s)\mathcal{L}}\gamma(s)\big)\geq\frac{1}{n^{2}},\quad\mbox{for all }s\in\mathcal{S}.
    \end{equation*}

    For each $s^{*}\in\mathcal{S}$, define
    \begin{equation*}
        \tau_1(s^{*}):=\sup\Big\{t>0\Big|\ I\big(\gamma(s^{*})\big)-I\big(e^{t\mathcal{L}}\gamma(s^{*})\big)\leq\frac{1}{n^{3}}\Big\}.
    \end{equation*}
    Clearly, $\tau_1(s^*)< \tau(s^*)$. Furthermore, by the semi-lower continuity of $\tau$ mentioned above, there exists an $r(s^{*})>0$, such that for all $s$ satisfying
    \begin{equation*}
        s\in\mathcal{N}_{s^{*}}:=\Big(s^{*}-r(s^{*}),s^{*}+r(s^{*})\Big),
    \end{equation*}
    one has
    \begin{equation}\label{eq. tau_1 stable property}
        \tau(s)\geq\tau_{1}(s^{*}),\quad\mbox{and }I\big(\gamma(s)\big)-I\big(e^{\tau_{1}(s^{*})\mathcal{L}}\gamma(s)\big)\geq\frac{1}{2n^{3}}.
    \end{equation}
    
    \textbf{Step 2.} In the neighborhood $\mathcal{N}_{s^{*}}$, we define a new stop time $\mathcal{T}_{s^{*}}(s)$ for the heat flow algorithm starting from $\gamma(s)$, which is always less than the actual stop time $\tau(s)$. In fact, define a continuous cut-off function
    \begin{equation*}
        \rho(s)=\left\{\begin{aligned}
            &1,&\mbox{if }&I\big(\gamma(s)\big)\geq\sigma,\\
            &1-n^{4}\cdot\Big[\sigma-I\big(\gamma(s)\big)\Big],&\mbox{if }&\sigma-\frac{1}{n^{4}}\leq I\big(\gamma(s)\big)\leq\sigma,\\
            &0,&\mbox{if }&I\big(\gamma(s)\big)\leq\sigma-\frac{1}{n^{4}}.
        \end{aligned}\right.
    \end{equation*}
    For $s\in\mathcal{N}_{s^{*}}$, let
    \begin{equation*}
        \mathcal{T}_{s^{*}}(s):=\tau_{1}(s^{*})\cdot\rho(s).
    \end{equation*}
    It follows from \eqref{eq. tau_1 stable property} that
    \begin{equation}\label{eq. T local stop time}
        \mathcal{T}_{s^{*}}(s)\leq\tau_{1}(s^{*})\leq\tau(s)\quad\mbox{for all }s\in\mathcal{N}_{s^{*}}.
    \end{equation}
    More importantly, for each $\gamma(s)$ with $s\in\mathcal{N}_{s^{*}}$, at $t=\mathcal{T}_{s^{*}}(s)$ one has
    \begin{equation}\label{eq. any local stop time decreases I() a lot}
        I\Big(e^{\mathcal{T}_{s^{*}}(s)\mathcal{L}}\gamma(s)\Big)<\sigma.
    \end{equation}
    In fact, recall that $I\big(e^{t\mathcal{L}}\gamma(s)\big)$ is decreasing in $t$ (see Remark~\ref{rmk. computations laws in heat flow algorithm}), so it suffices to consider those $s\in\mathcal{N}_{s^{*}}$ satisfying $I\big(\gamma(s)\big)\geq\sigma$. For those $s$, we deduce from \eqref{eq. tau_1 stable property} that:
    \begin{equation*}
        I\Big(e^{\mathcal{T}_{s^{*}}(s)\mathcal{L}}\gamma(s)\Big)=I\Big(e^{\tau_{1}(s^{*})\mathcal{L}}\gamma(s)\Big)\leq I\big(\gamma(s)\big)-\frac{1}{2n^{3}}\leq\sigma+\frac{1}{n^{4}}-\frac{1}{2n^{3}}<\sigma.
    \end{equation*}

    Notice that $\mathcal{S}$ is a compact set excluding the endpoints of the homotopy curve, and that $\{\mathcal{N}_{s^{*}}\}$ forms an open cover of $\mathcal{S}$, we can apply the Heine-Borel theorem, and extract a finite sub-cover of $\{\mathcal{N}_{s^{*}}\}$, denoted by $\{\mathcal{N}_{s_{i}^{*}}\}_{i=1}^{m}$. For $i\in\{1,\cdots,m\}$, write:
    \begin{equation*}
        \mathcal{N}_{i}:=\mathcal{N}_{s_{i}^{*}},\quad\mathcal{T}_{i}(s):=\mathcal{T}_{s_{i}^{*}}(s).
    \end{equation*}
    These sets together satisfy $\mathcal{N}_{1}\cup\cdots\cup\mathcal{N}_{m}\supseteq\mathcal{S}$. We additionally define
    \begin{equation*}
        \mathcal{N}_{0}:=[0,1]\setminus\mathcal{S},\quad\mbox{and }\mathcal{T}_{0}(s):=0\mbox{ for }s\in\mathcal{N}_{0}.
    \end{equation*}
    
    \textbf{Step 3.} Let $\{\eta_{i}\}_{i=0}^{m}$ be a (nonnegative) smooth partition of unity corresponding to the cover $\{\mathcal{N}_{i}\}_{i=0}^{m}$. We define a new stop time on the whole interval $[0,1]$ as
    \begin{equation*}
        \mathcal{T}(s):=\sum_{i=0}^{m}\eta_{i}(s)\mathcal{T}_{i}(s).
    \end{equation*}
    Clearly, $\mathcal{T}(s)$ is a continuous function with respect to $s$. Furthermore, it follows from \eqref{eq. T local stop time} that
    \begin{equation*}
        \mathcal{T}(s)\leq\tau(s)\mbox{ in }[0,1].
    \end{equation*}
    Since $\displaystyle I\big(\gamma(s)\big)\leq\sigma+\frac{1}{n^{4}}$ for $s\in [0,1]$, 
    for $s\notin\mathcal{S}$, it holds that $\displaystyle I\big(\gamma(s)\big)\leq\sigma-\frac{1}{n^{4}}$. Hence, $\mathcal{T}_{i}(s)=0$ for all $0\leq i\leq m$, and thus $\mathcal{T}(s)=0$. In particular, $\mathcal{T}(0)=\mathcal{T}(1)=0$. Next, define a new curve:
    \begin{equation*}
        \widetilde{\gamma}(s):=e^{\mathcal{T}(s)\mathcal{L}}\gamma(s).
    \end{equation*}
    As $\mathcal{T}(0)=\mathcal{T}(1)=0$, we still have $\widetilde{\gamma}(0)=0$ and $\widetilde{\gamma}(1)=\mathfrak{e}$. Moreover, as $\mathcal{T}(s)$ is a continuous function such that $e^{t\mathcal{L}}\gamma(s)$ does not blow up for $t\leq\mathcal{T}(s)$, it then follows from Part (2) of Lemma~\ref{lem. appendix well-posedness} that $\widetilde{\gamma}(s)$ is a continuous curve in $H^{1}(\Omega)$ with respect to $s$. Therefore, $\widetilde{\gamma}\in\overline{\Gamma}$.
    
    For every $s\in[0,1]$ satisfying $I\big(\gamma(s)\big)\geq\sigma$, it must hold that  $s\notin\mathcal{N}_{0}$, so the partition function satisfies $\eta_{0}=0$. This implies
    \begin{equation*}
        \mathcal{T}(s)=\sum_{i=1}^{m}\eta_{i}(s)\mathcal{T}_{i}(s)\mbox{ and }\sum_{i=1}^{m}\eta_{i}(s)=1,\quad\mbox{for all }s\in[0,1]\cap\Big\{I\big(\gamma(s)\big)\geq\sigma\Big\}.
    \end{equation*}Therefore, there exists an $i=i(s)\in\{1,\cdots,m\}$ with 
    $\eta_{i}(s)>0$ such that
    \begin{equation*}
        s\in\mathcal{N}_{i}\mbox{ and }\mathcal{T}(s)\geq\mathcal{T}_{i}(s).
    \end{equation*}
    Hence, it follows from \eqref{eq. any local stop time decreases I() a lot} that for every $s\in[0,1]$ with $I\big(\gamma(s)\big)\geq\sigma$,
    \begin{equation*}
        I\Big(e^{\mathcal{T}(s)\mathcal{L}}\gamma(s)\Big)\leq I\Big(e^{\mathcal{T}_{i}(s)\mathcal{L}}\gamma(s)\Big)<\sigma,\quad\mbox{for some }i=i(s)\in\{1,\cdots,m\}.
    \end{equation*}
    In conclusion, the new homotopy curve $\widetilde{\gamma}$ satisfies
    \begin{equation*}
        \widetilde{\gamma}\in\overline{\Gamma}\quad\mbox{and }\max_{s\in[0,1]}I\big(\widetilde{\gamma}(s)\big)<\sigma.
    \end{equation*}
    This contradicts the definition of $\sigma$ in \eqref{eq. minimal of homotopy}. The proof of Lemma~\ref{lem. it is possible that (K2) is the stopping reason} is completed.
\end{proof}
\subsection{Strong \texorpdfstring{$H^{1}$}{Lg}-convergence}
In this subsection, we prove the following refined Palais-Smale condition. We show that the functional $I(u)$, when restricted to well-arranged functions $u$, satisfies the Palais-Smale condition.
\begin{lemma}[Refined Palais-Smale condition]\label{lem. Palais-Smale}
   Let $I(u)$ be defined as in \eqref{eq. I(u) definition}, with $G(u,y)$ satisfying the assumptions (G1)-(G3). Let $\{u_{n}\}\subseteq\overline{\mathcal{C}}$ be a sequence such that $\{I(u_{n})\}$ is uniformly bounded and that $\nabla I(u_{n})\to0$ in $H^{-1}(\Omega)$. Then there exists a subsequence of $\{u_{n}\}$ which strongly converges to some $u_{\infty}\in H^{1}_{0}(\Omega)$ in $H^{1}(\Omega)$.
\end{lemma}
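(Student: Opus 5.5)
Since $\{u_n\}\subseteq\overline{\mathcal{C}}$, every $u_n$ is nonnegative, even in $x$ and nonincreasing in $|x|$. The plan is to show that this symmetric structure forces a bounded Palais--Smale sequence to be precompact: it removes the translation non-compactness, and the remaining loss of mass at infinity is excluded by monotonicity.

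\textbf{Step 1: boundedness in $H^{1}$.} Because $u_n\ge0$, (G1)--(G3) should give a superquadratic inequality of Ambrosetti--Rabinowitz type in the negative direction. From $G'''\le -1/C_0$ one computes $\frac{d}{du}\big(uG'(u,y)-3G(u,y)\big)=uG''-2G'$ and $\frac{d}{du}(uG''-2G')=uG'''-G''$. Hence $uG'-3G=\int_0^u (u-s)\,sG'''(s,y)\,ds\le -c\,u^{3}$ for $u\ge0$, because this combination vanishes to second order at $0$. Consider
\[
3I(u_n)-\langle\nabla I(u_n),u_n\rangle=\tfrac12\|\nabla u_n\|_{L^2}^2-\int_\Omega\big(u_nG'(u_n,y)-3G(u_n,y)\big)\ge \tfrac12\|\nabla u_n\|_{L^2}^2+c\int_\Omega u_n^3 .
\]
The left side is at most $C+o(1)\|u_n\|_{H^1}$. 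With Poincar\'e in the channel (Lemma~\ref{lem. Sobolev in a channel}) this gives $\sup_n\|u_n\|_{H^1}<\infty$. The exact exponent may need adjusting, e.g.\ $\theta I-\langle\nabla I,u\rangle$ with $2<\theta\le3$; the point is that the sign of $G'''$ makes the cubic term help.

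\textbf{Step 2: weak limit and compactness via monotone decay.} Passing to a subsequence, $u_n\rightharpoonup u_\infty$ in $H^1_0(\Omega)$, strongly in $L^p_{loc}$ by Rellich, and $u_\infty\in\overline{\mathcal{C}}$ since $\overline{\mathcal{C}}$ is closed and convex. The key decay estimate uses that $u_n(\cdot,y)$ is even and nonincreasing in $|x|$. For $|x|\ge R$,
\[
\int_{-1}^1 u_n(x,y)^2\,dy\le\frac1{2|x|}\int_{\Omega}u_n^2\le \frac{C}{|x|},
\]
and the same bound holds in slab-averaged form. Combined with the uniform $H^1$ bound and a one-dimensional Sobolev estimate on unit slabs $[L,L+1]\times[-1,1]$, this yields $\sup_n\|u_n\|_{L^\infty(\{|x|\ge R\})}\to0$ as $R\to\infty$, and in particular $\sup_n\|u_n\|_{L^3(\{|x|>R\})}$ becomes small relative to the $L^2$ norm there. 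Hence the tails are uniformly small in the nonlinear sense. This step is the main obstacle. Vanishing tails in $L^\infty$ do not by themselves give small $L^2$ tails, since mass can spread out, so I would close the argument with the coercivity of Step 3 rather than claim $L^2$-tightness directly.

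\textbf{Step 3: strong convergence.} Test $\nabla I(u_n)-\nabla I(u_\infty)$ against $u_n-u_\infty$ and write $w_n=u_n-u_\infty$. This gives
\[
\|\nabla w_n\|_{L^2}^2+\int_\Omega\big(G'(u_n,y)-G'(u_\infty,y)\big)w_n=o(1).
\]
Here $\langle\nabla I(u_n),w_n\rangle\to0$ because $w_n$ is bounded. Also $\langle\nabla I(u_\infty),w_n\rangle\to0$ by weak convergence, since $\nabla I(u_\infty)\in H^{-1}$; note that $u_\infty$ is a critical point by the weak-limit argument, although this is not needed. Split the nonlinear integral by $G'(u_n)-G'(u_\infty)=G''(\theta_n,y)w_n$ with $\theta_n$ between $u_\infty$ and $u_n$. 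On $\{|x|\le R\}$ the integral tends to $0$ by local strong $L^p$ convergence and (G3). On $\{|x|>R\}$, both $u_n$ and $u_\infty$ are uniformly small in $L^\infty$ by Step 2. Therefore $G''(\theta_n,y)\ge \frac12\inf_y G''(0,y)>0$ there by (G1) and continuity, so that part is nonnegative and in fact dominates $c\|w_n\|_{L^2(|x|>R)}^2$. Together with Poincar\'e this gives $\|w_n\|_{H^1}\to0$, which proves Lemma~\ref{lem. Palais-Smale}.
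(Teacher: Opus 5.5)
Your Step 1 computation is wrong as written, though your hedge about adjusting the exponent repairs it. The map $u\mapsto uG'(u,y)-3G(u,y)$ does not vanish to second order at $0$: its second derivative there is $-G''(0,y)\neq0$. The integral $\int_0^u(u-s)\,sG'''(s,y)\,ds$ you wrote is in fact $uG'-2G$.

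With $\theta=3$ the cubic terms cancel exactly in the model $G=\frac{a}{2}u^2-\frac{b}{6}u^3$, where $uG'-3G=-\frac{a}{2}u^2$. In general $3G-uG'=G+(2G-uG')$ has no sign. Taking $G'''$ very negative on $[0,1]$, which (G1)--(G3) allow, makes $3G-uG'\approx-\kappa u^2$ for large $u$ with $\kappa$ as large as you like. So $3I(u)-\langle\nabla I(u),u\rangle\ge\frac12\|\nabla u\|^2$ can fail.

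What works is $\theta$ close to $2$. Your formula gives $2G-uG'\ge u^3/(6C_0)$ for $u\ge0$. Also $G\ge-Cu^3$ for $u\ge0$, from (G1) near $0$ and (G3) away from it. Hence $\theta G-uG'=(\theta-2)G+(2G-uG')\ge0$ whenever $0<\theta-2\le 1/(6C_0C)$. This yields $(\tfrac{\theta}{2}-1)\|\nabla u_n\|_{L^2}^2\le\theta\sup_nI(u_n)+o(1)\|u_n\|_{H^1}$. This is the same pair of inequalities the paper combines in its Step 1.

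With that repair the proof is complete, and your Steps 2--3 organize the compactness differently from the paper. The paper never extracts a weak limit. It shows $\{u_n\}$ is Cauchy in $L^p(\Omega)$ for every $p>2$: the decay $\sup_{|x|\ge L}u\le\|u\|_{H^1}/\sqrt{4L}$ of Corollary~\ref{lem. L infinity uniform decay}, times the uniform $L^2$ bound, makes $L^p$ tails small, and Rellich handles $\Omega_L$. It then uses the one-sided bound $G''(u,y)\ge-Cu$ for $u\ge0$, which is exactly where $\inf_yG''(0,y)>0$ enters. This bounds the bad part of $\int(G'(u_m,y)-G'(u_n,y))(u_m-u_n)$ by $C\|\theta\|_{L^2}\|u_m-u_n\|_{L^4}^2$, so $L^2$ tightness is never needed.

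You instead split at $|x|=R$. Inside, local Rellich suffices. Outside, uniform $L^\infty$ smallness puts $\theta_n$ where $G''\ge\frac12\inf_yG''(0,y)$, so the tail term has a good sign and even controls $\|w_n\|_{L^2(\{|x|>R\})}$. The ingredients are the same: decay forced by the symmetric-decreasing structure, plus positivity of $G''(0,y)$. Your version needs only qualitative uniform decay and no global $L^p$ convergence. The paper's avoids the weak limit and the weak closedness of $\overline{\mathcal C}$.

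One point to make explicit in your Step 2. The trace bound $\int_{-1}^1u_n(x,y)^2\,dy\le\|u_n\|_{L^2}^2/(2|x|)$ does not by itself give pointwise decay, since $H^1\not\subset L^\infty$ in two dimensions. You need monotonicity a second time. For example, $u_n(x,y)\le U_n(y):=\int_{R-1}^{R}u_n(x',y)\,dx'$ for $x\ge R$, and $\|U_n\|_{L^\infty}^2\le\|U_n\|_{L^2}\|U_n'\|_{L^2}\lesssim R^{-1/2}$. Alternatively, use the vertical-line argument behind Corollary~\ref{lem. L infinity uniform decay}.
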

\begin{proof}
    For simplicity, assume that there exists a constant $C_1>0$ such that
    \begin{equation}\label{eq. a PS sequence already given}
        I(u_{n})\leq C_{1}\mbox{ and }\|\nabla I(u_{n})\|_{H^{-1}(\Omega)}\leq\frac{1}{n}\quad \text{for all}\,\, n\in \mathbb{N}.
    \end{equation}
    Moreover, it follows from the assumptions (G1)-(G3) that
    \begin{equation}\label{eq. Legendre transform of G(u,y)}
        G(u,y)\geq-Cu^{3}\mbox{ and }\frac{1}{2}uG'(u,y)-G(u,y)\leq-cu^{3},\quad\mbox{for all }u\geq0.
    \end{equation}
    The rest of the proof is divided into three steps.

    \textbf{Step 1: Estimate of the $H^{1}$ norm.} We first show that there exists a constant $C_{2}$ such that
    \begin{equation*}
        \mu_{n}:=\|u_{n}\|_{H^{1}(\Omega)}\leq C_{2},\quad\mbox{for all }n\geq1.
    \end{equation*}
    In fact, it follows from \eqref{eq. a PS sequence already given} that
    \begin{equation*}
        \int_{\Omega}u_{n}\cdot \nabla I(u_{n})dxdy\geq-\|\nabla I(u_{n})\|_{H^{-1}(\Omega)}\|u_{n}\|_{H^{1}(\Omega)}\geq-\frac{\mu_{n}}{n}.
    \end{equation*}
    As Lemma~\ref{lem. Sobolev in a channel} implies that $u_{n}\in L^{3}(\Omega)$, recalling \eqref{eq. Legendre transform of G(u,y)} yields
    \begin{equation}\label{eq. PS verification step 1 inequality 1}
        -\frac{\mu_{n}}{2n}\leq\int_{\Omega}\Big\{\frac{|\nabla u_{n}|^{2}}{2}+\frac{1}{2}u_{n}G'(u_{n},y)\Big\}dxdy=I(u_{n})-c\int_{\Omega}u_{n}^{3}dxdy.
    \end{equation}
    
    On the other hand, it follows from \eqref{eq. Legendre transform of G(u,y)} that
    \begin{equation*}
        I(u_{n})\geq\frac{\mu_{n}^{2}}{2}+\int_{\Omega}G(u_{n},y)dxdy\geq\frac{\mu_{n}^{2}}{2}-C\int_{\Omega}u_{n}^{3}dxdy.
    \end{equation*}
    Thus one has \[
\displaystyle\int_{\Omega}u_{n}^{3}dxdy\geq\frac{1}{C}\Big(\frac{\mu_{n}^{2}}{4}-\frac{1}{2}I(u_{n})\Big).
\]
This, together with \eqref{eq. PS verification step 1 inequality 1}, yields
    \begin{equation*}
        -\frac{\mu_{n}}{2n}\leq I(u_{n})-\frac{c}{C}\Big(\frac{h_{n}^{2}}{4}-\frac{1}{2}I(u_{n})\Big).
    \end{equation*}
    Therefore, choosing $C_{2}:=C_{2}(C_{1})$ yields
    \begin{equation*}
        \mu_{n}\leq C\sqrt{I(u_{n})+\frac{1}{n^{2}}}\leq C_{2}.
    \end{equation*}
    
    \textbf{Step 2: Strong convergence in $L^{p}$ for $p>2$.} We next show that there exists a subsequence of $\{u_{n}\}$ (still labeled by $\{u_{n}\}$), which converges strongly in $L^{p}(\Omega)$ for all $p>2$.
        
    Denote $\Omega_{L}=[-L,L]\times[-1,1]$. For $u_{n}\in\overline{\mathcal{C}}$, it follows from Corollary~\ref{lem. L infinity uniform decay} that
    \begin{equation}\label{eq. decay rate for u_n}
        \sup_{\Omega\setminus\Omega_{L}}u_{n}(x,y)\leq\frac{C}{2\sqrt{L}}.
    \end{equation}
    Furthermore, the uniform boundedness of $\mu_{n}=\|u_{n}\|_{H^{1}(\Omega)}$, together with Lemma~\ref{lem. Sobolev in a channel}, implies that $\{u_{n}\}$ is uniformly bounded in $L^{p}(\Omega)$ for each $p\geq2$. Therefore, there exists a subsequence of $\{u_{n}\}$ (still labeled by $\{u_{n}\}$), which strongly converges in $L^{p}(\Omega_{L})$ for each $p\in(1,\infty)$ and any $L>0$.
        
    Now, let us prove the strong convergence in $L^{p}(\Omega)$ for $p>2$. For every $\delta>0$, we choose a sufficiently large $L=L(p,\delta,C_{1})$, then it follow from \eqref{eq. decay rate for u_n} that:
    \begin{equation*}
        \int_{\Omega\setminus\Omega_{L}}|u_{n}|^{p}dxdy\leq\Big(\frac{C}{2\sqrt{L}}\Big)^{p-2}\|u_{n}\|_{L^{2}(\Omega)}^{2}\leq\left(\frac{\delta}{4}\right)^{p},\quad\mbox{for all }n\geq1.
    \end{equation*}
    Notice that $u_{n}\in H^{1}_{0}(\Omega)$ and that $\{u_{n}\}$ is uniformly bounded in $\overline{\Omega_{L}}$. Hence, applying the Rellich compactness theorem to $\{u_{n}\}$ yields that a subsequence of $\{u_{n}\}$ (still labeled by $\{u_{n}\}$) is a Cauchy sequence in $L^{p}(\Omega_{L})$. Therefore, there exists an $N=N(p,\delta,C_{1})>0$, such that
    \begin{equation*}
        \|u_{m}-u_{n}\|_{L^{p}(\Omega_{L})}\leq\delta/2\quad\mbox{for all }m,n>N.
    \end{equation*}
    Thus, for sufficiently large $m,n$, it holds that
    \begin{equation*}
        \|u_{m}-u_{n}\|_{L^{p}(\Omega)}\leq\|u_{m}-u_{n}\|_{L^{p}(\Omega_{L})}+\|u_{m}\|_{L^{p}(\Omega\setminus\Omega_{L})}+\|u_{n}\|_{L^{p}(\Omega\setminus\Omega_{L})}\leq\frac{\delta}{2}+\frac{\delta}{4}+\frac{\delta}{4}=\delta.
    \end{equation*}
    This means that the subsequence $\{u_{n}\}$ converges strongly in $L^{p}(\Omega)$ for all $p>2$.
    
    \textbf{Step 3: Strong $H^{1}$ convergence.} Note that
    \begin{equation*}
        \nabla I(u_{m})-\nabla I(u_{n})=G'(u_{m},y)-G'(u_{n},y)-\Delta(u_{m}-u_{n}).
    \end{equation*}
    It then follows from the triangle inequality that
    \begin{align*}
        &\,\Big|\int_{\Omega}(u_{m}-u_{n})\Big(\nabla I(u_{m})-\nabla I(u_{n})\Big)dxdy\Big|\\
        \leq&\,\|u_{m}-u_{n}\|_{H^{1}(\Omega)}\cdot\|\nabla I(u_{m})-\nabla I(u_{n})\|_{H^{-1}(\Omega)}\\
        \leq&\,\Big(\|u_{m}\|_{H^{1}(\Omega)}+\|u_{n}\|_{H^{1}(\Omega)}\Big)\cdot\frac{m+n}{mn}\leq2C_{3}\frac{m+n}{mn}.
    \end{align*}
    Hence, it holds that
    \begin{equation*}
        \lim_{m,n\to \infty}\int_{\Omega}\Big\{|\nabla(u_{m}-u_{n})|^{2}+\Big(G'(u_{m},y)-G'(u_{n},y)\Big)(u_{m}-u_{n})\Big\}dxdy=0.
    \end{equation*}

    By the assumptions (G1)-(G3), one has $G''(u,y)\geq-Cu$ for $(u,y)\in[0,\infty)\times[-1,1]$, so
    \begin{align*}
        &\int_{\Omega}\Big(G'(u_{m},y)-G'(u_{n},y)\Big)(u_{m}-u_{n})dxdy\\
        =&\int_{\Omega}G''(\theta,y)\cdot(u_{m}-u_{n})^{2}dxdy\geq-C\int_{\Omega}|\theta|\cdot(u_{m}-u_{n})^{2}dxdy,
    \end{align*}
    where $\theta=\theta(x,y)$ satisfies
    \begin{equation*}
        \theta(x,y)\in\Big(\min\{u_{m}, u_n\}(x,y),\max\{u_m,u_{n}\}(x,y)\Big).
    \end{equation*}
    Therefore, for any $\epsilon>0$, there exists an  $N\in\mathbb{N}$, such that for all $m,n\geq N$, one has
    \begin{align*}
        \int_{\Omega}|\nabla(u_{m}-u_{n})|^{2}dxdy\leq&\epsilon+C\int_{\Omega}(1+|\theta|)\cdot(u_{m}-u_{n})^{2}dxdy\\
        \leq&\epsilon+C\|\theta\|_{L^{2}(\Omega)}\|u_{m}-u_{n}\|_{L^{4}(\Omega)}^{2},
    \end{align*}
    where the Cauchy-Schwarz inequality has been used in the last step.
    
    As $\theta$ is bounded between $u_{m}$ and $u_{n}$, one has
    \begin{equation*}
        \|\theta\|_{L^{2}(\Omega)}\leq\|u_{m}\|_{L^{2}(\Omega)}+\|u_{n}\|_{L^{2}(\Omega)}\leq C\Big(\|u_{m}\|_{H^{1}(\Omega)}+\|u_{n}\|_{H^{1}(\Omega)}\Big)\leq C\cdot2C_{3}=:C_{4}.
    \end{equation*}
 As what has already been shown in Step 2,  $\|u_{m}-u_{n}\|_{L^{4}(\Omega)}^{2}$ tends to zero by the strong convergence of $\{u_{n}\}$ in $L^{4}(\Omega)$. Hence one has
    \begin{equation*}
\limsup_{m,n\to\infty}\int_{\Omega}|\nabla(u_{m}-u_{n})|^{2}dxdy\leq\limsup_{m,n\to\infty}6C_{4}\|u_{m}-u_{n}\|_{L^{4}(\Omega)}^{2}=0.
    \end{equation*}
    This implies that $\{u_{n}\}$ strongly converges in $H^{1}(\Omega)$.
\end{proof}

 Part (2) of Proposition~\ref{prop. Existence and convergence of Palais-Smale sequence} is a direct consequence of Lemma~\ref{lem. Palais-Smale}. Therefore, the first part of Theorem~\ref{MainTheorem} is proved. 
 
The second part of Theorem~\ref{MainTheorem} is proved in Section \ref{Section4}.

\section{Rigidity of analytic steady states in the channel}\label{Section4}
 In this section, we show that there is a dense family of uniformly non-stagnant shear flows $\varphi$,  which every analytic steady state asymptotically converging to at the far fields of the channel must be the shear flows themselves. We denote by 
    $\mathcal{A}(\varphi)$ be the class of analytic solution to \eqref{eqn:Steady in channel}.

\begin{proposition}\label{prop:dense-rigidity}
Let $\varphi_0:[-1,1]\to\mathbb{R}$ be a uniformly non-stagnant shear profile satisfying $\varphi_0'(y)>0$. Suppose that $\mathcal{A}(\varphi_0)$ contains a non-shear element. Then there exists a constant $\delta(\varphi_0)>0$ such that for every $\epsilon\in (0, \delta(\varphi_0))$, $\mathcal{A}(\varphi_{0}+\epsilon y)$ has a unique element $\varphi_{0}(y)+\epsilon y$.
\end{proposition}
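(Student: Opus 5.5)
The plan is to reduce to a semilinear equation and then run a sliding (Berestycki--Nirenberg type) argument in the vertical variable. The perturbation $\epsilon y$ and the hypothesis on $\mathcal{A}(\varphi_0)$ are used only to confine the range of a competitor.

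\textbf{Step 1: the semilinear equation.} Write $\varphi_\epsilon:=\varphi_0+\epsilon y$. We have $\varphi_\epsilon'=\varphi_0'+\epsilon>0$. Since $\varphi_0$ is analytic on $[-1,1]$, it extends analytically to some $[-1-\delta_0,1+\delta_0]$ with $\varphi_0'>0$ there. For $\epsilon$ small the same holds for $\varphi_\epsilon$, uniformly in $\epsilon$.

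Define $F_\epsilon$ on $\varphi_\epsilon([-1-\delta_0,1+\delta_0])$ by $F_\epsilon(\varphi_\epsilon(y))=\varphi_\epsilon''(y)=\varphi_0''(y)$. Then $F_\epsilon$ is analytic on this interval, and every translate $\varphi_\epsilon(y+\tau)$ with $|\tau|\le\delta_0$ solves $\Delta w=F_\epsilon(w)$ on $\Omega$.

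Let $\psi\in\mathcal{A}(\varphi_\epsilon)$. Near $|x|=\infty$ the level sets of $\psi$ are graphs close to those of $\varphi_\epsilon$, and they connect to the far field. Since $\Delta\psi$ is constant along streamlines, we get $\Delta\psi=F_\epsilon(\psi)$ on a neighbourhood of infinity. Now $\Delta\psi-F_\epsilon(\psi)$ is real analytic wherever $F_\epsilon$ continues analytically along the values of $\psi$. Hence this identity propagates to all of $\Omega$, provided the range of $\psi$ stays inside the domain of analytic continuation of $F_\epsilon$. This is the structural fact invoked in Remark \ref{rmk. t+1-e^-t}.

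\textbf{Step 2: sliding.} Assume for the moment that
\[
\psi(\overline\Omega)\subset\varphi_\epsilon([-1-\delta_0,1+\delta_0]).
\]
For $\tau\in(0,\delta_0]$ set $w_\tau:=\varphi_\epsilon(y+\tau)$. On $\partial\Omega$ we have $w_\tau>\psi$ by strict monotonicity. At $|x|\to\infty$ we have $w_\tau-\psi\to\varphi_\epsilon(y+\tau)-\varphi_\epsilon(y)\ge c\tau>0$ uniformly.

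For $\tau=\delta_0$, first show $w_{\delta_0}\ge\psi$ using the range bound. Then decrease $\tau$ to the first touching value $\tau_*$. The difference $w_{\tau_*}-\psi\ge0$ solves the linear equation
\[
\Delta z=c(x,y)z,\qquad c=\int_0^1F_\epsilon'(\psi+s z)\,ds,
\]
which is bounded. Touching cannot happen at infinity or on the boundary, by the strict inequalities above. An interior touching point forces $z\equiv0$ by the strong maximum principle, which contradicts the boundary inequality unless $\tau_*=0$.

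Therefore $\psi\le\varphi_\epsilon$. The symmetric argument with $\tau<0$ gives $\psi\ge\varphi_\epsilon$, hence $\psi=\varphi_\epsilon$.

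\textbf{Step 3: range control (main obstacle).} The difficulty is to justify the range bound used in Step 2 and, with it, the analytic continuation in Step 1. Here I would use both $\epsilon$ and the hypothesis that $\mathcal{A}(\varphi_0)$ contains a non-shear element $\psi_0$.

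First, Step 1 applied to $\psi_0$ shows that $F_0$ continues analytically over the range of $\psi_0$. This pins down the continuation of $F_0$ on a fixed interval containing $\mathcal{R}_{\varphi_0}$.

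Next, compare this with the continuation of $F_\epsilon$. In $y$-coordinates one has $F_\epsilon(t)=\varphi_0''(Y_\epsilon(t))$, where $Y_\epsilon$ inverts $\varphi_0+\epsilon y$. The term $\epsilon y$ shifts the branch points of $Y_\epsilon$, which are the zeros of $\varphi_0'+\epsilon$ in the complex extension. So for every small $\epsilon\neq0$, the continuation of $F_\epsilon$ differs from the extension through $\varphi_\epsilon$ as soon as one leaves $\varphi_\epsilon([-1-\delta_0,1+\delta_0])$.

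Consequently an analytic $\psi$ whose range exceeds this interval cannot satisfy a single analytic relation $\Delta\psi=F_\epsilon(\psi)$. By Step 1 that relation is forced, so the range bound holds. At an interior maximum or minimum of $\psi$ outside $\mathcal{R}_{\varphi_\epsilon}$, the maximum principle together with the sign of $F_\epsilon$ near the endpoints (inherited from $\varphi_0''$) should give a quantitative version of this bound.

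Making this continuation and monodromy argument precise is the step I expect to be hardest. The choice of $\delta(\varphi_0)$ comes from requiring $|\epsilon|$ to be smaller than the distance from $0$ to the critical values of $-\varphi_0'$ on the complexified neighbourhood.
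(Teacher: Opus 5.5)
There is a genuine gap, and it sits in your Step 3, which carries the entire content of the proposition.

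First, the argument is circular. Step 1 only gives $\Delta\psi=F_\epsilon(\psi)$ on all of $\Omega$ \emph{provided} the range of $\psi$ stays where $F_\epsilon$ continues analytically. Step 3 then uses ``by Step 1 that relation is forced'' to obtain exactly that range bound. What you need is an unconditional global equation, which the paper supplies as Proposition~\ref{prop. semilinear equation}. It comes from the structure theory of analytic steady states: the critical set decomposes $\Omega$ into cells, there is a semilinear equation in each cell, and unique continuation glues them. The result is $\Delta\psi=F_\psi(\psi)$ on $\Omega$, with $F_\psi$ analytic on the open range $(\check A,\hat B)$ of $\psi$ and of Puiseux type at its endpoints, with no a priori information on the range.

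Second, your claim that the continuation of $F_\epsilon$ differs from the one through $\varphi_\epsilon$ ``as soon as one leaves $\varphi_\epsilon([-1-\delta_0,1+\delta_0])$'' is false. By the identity theorem, $F_\epsilon(\widetilde\varphi_\epsilon)=\widetilde\varphi_\epsilon''$ persists on the whole maximal monotone interval $(a_\epsilon,b_\epsilon)$ of Definition~\ref{def. maximal interval}, which is in general much larger. Complex zeros of $\varphi_0'+\epsilon$ do not obstruct continuation along the real line. The general principle ``range beyond the profile's range $\Rightarrow$ no single analytic relation'' also fails. Proposition~\ref{lem. analytic flexibility} gives an analytic non-shear $\psi$ with $\Delta\psi=2\psi-3\psi^2$ whose range is not contained in the range $(0,1)$ of the maximal monotone extension of its far field. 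Finally, your use of the non-shear element of $\mathcal{A}(\varphi_0)$ plays no role afterwards.

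The missing idea is an endpoint analysis at $a_\epsilon$ and $b_\epsilon$:
\begin{itemize}
\item If $\hat B>\widetilde\varphi_\epsilon(b_\epsilon)$, then $F_\psi$ is analytic near $\widetilde\varphi_\epsilon(b_\epsilon)$ and $\widetilde\varphi_\epsilon''=F_\psi(\widetilde\varphi_\epsilon)$, so $\widetilde\varphi_\epsilon$ continues past $b_\epsilon$. Maximality forces $\widetilde\varphi_\epsilon'(b_\epsilon)=0$, and reversibility of this autonomous ODE forces the continuation to be even about $b_\epsilon$.
\item In the borderline case $\hat B=\widetilde\varphi_\epsilon(b_\epsilon)$ with $\widetilde\varphi_\epsilon'(b_\epsilon)=0$, the Puiseux expansion of $F_\psi$ (via $\eta=(\hat B-\widetilde\varphi)^{1/(2k_0)}$) again yields an even continuation.
\end{itemize}
So the range bound holds whenever each endpoint is of one of the symmetry-breaking types in Definition~\ref{def. symmetry breaking}. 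The role of $\epsilon>0$ is to make a non-quadratic $\varphi_0$ satisfy this:
\begin{itemize}
\item A finite critical endpoint with even continuation moves to $a_\epsilon\neq a$, where a nonvanishing odd derivative of order $\ge 3$ breaks evenness.
\item An infinite endpoint with a finite limit becomes one with $\widetilde\varphi_\epsilon\to\pm\infty$; this uses the sign of $\epsilon$.
\item Quadratic $\varphi_0$ are treated separately, since then $F$ is constant.
\end{itemize}
Your branch-point heuristic captures only the critical-endpoint case. It must be placed at the real endpoints $a_\epsilon,b_\epsilon$, and the non-critical and infinite endpoint cases must be added.

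Step 2 also has a smaller flaw: the sliding cannot be started. With translates only for $|\tau|\le\delta_0$, the bound $\psi\le\varphi_\epsilon(1+\delta_0)$ does not give $\psi(x,y)\le\varphi_\epsilon(y+\delta_0)$ near $y=-1$. The paper avoids sliding altogether by writing $\psi=\widetilde\varphi(y+h)$. Then $\Delta h+\mathbf V\cdot\nabla h=0$ with $h=0$ on $\partial\Omega$ and $h\to0$ at infinity, and the strong maximum principle gives $h\equiv 0$ directly.
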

It is easy to see that the second part of  Theorem~\ref{MainTheorem} follows directly from Proposition~\ref{prop:dense-rigidity}.
One of the key ingredients of Proposition~\ref{prop:dense-rigidity} is to show that there is a semilinear elliptic equation governing analytic solutions of \eqref{eqn:Steady in channel}.
\subsection{Semilinear equation for the stream function of analytic flows}\label{sec3}
The purpose of this section is to show that the stream function of an analytic steady state in the
infinite channel, whose far field is a uniformly non-stagnant shear flow, should satisfy a global semilinear elliptic equation.
The proof can be regarded as an adaptation of that in 
\cite{ElgindiHuangSaidXie_ClassificationSteadyEulerFlows_DMJ}, where the analytic flows in bounded simply connected domains with slip boundary conditions are studied. 

\begin{proposition}[Semilinear equation]\label{prop. semilinear equation}
Given $\varphi \in C^2([-1,1])$ satisfying $\varphi'(y)>0$ for $y\in [-1,1]$, 
let $\psi$ be an analytic steady state in the channel $\Omega =\mathbb{R}\times[-1,1]$, and assume that
\begin{equation}\label{eq. psi converge to varphi in L infty}
\|\psi(x,\cdot)-\varphi(\cdot)\|_{C^{2}([-1,1])}\to0 \text{ as } |x|\to+\infty.
\end{equation}
 Then there exists a continuous function $F$ such that
\[
\Delta\psi=F(\psi).
\]
Furthermore, if the range of $\psi$ is $[\check{A},\hat{B}]$, then $F$ and $\varphi$ satisfy the following properties.
\begin{itemize}
    \item[(1)] $F$ is analytic at $(\check{A},\hat{B})$. 
    \item[(2)] If $F$ is not analytic near $\check{A}$, then every point in $\psi^{-1}(\check{A})$ is a critical point of vanishing degree $2k_0\geq 4$, i.e., for any $x\in \psi^{-1}(\check{A})$, one has $\nabla^k \psi(x)=0$ for any integer $k$ satisfying $1\leq k < 2k_0$; furthermore, in this case there exist a sequence $\{a_k\}_{k=k_0-1}^\infty$ such that
    \begin{equation}\label{eqn:Puiseuxa}
        F(s)=\sum_{k=k_0-1}^\infty a_k (s-\check{A})^{\frac{k}{k_0}}\quad\text{with}\,\, a_{k_{0}-1}\neq0.
    \end{equation}
\item[(3)] If $F$ is not analytic near $\hat{B}$, then every point in $\psi^{-1}(\hat{B})$ is a critical point of vanishing degree $2k_1\geq 4$,  i.e., for any $x\in \psi^{-1}(\hat{B})$, one has $\nabla^k \psi(x)=0$ for any integer $k$ satisfying $1\leq k < 2k_1$; furthermore, in this case there exist a sequence $\{b_k\}_{k=k_1-1}^\infty$ such that
\begin{equation}\label{eqn:Puiseuxb}
    F(s)=\sum_{k=k_1-1}^\infty b_k (\hat{B}-s)^{\frac{k}{k_1}}\quad \text{with}\,\,b_{k_{1}-1}\neq0.
\end{equation}
\item[(4)] $\varphi \in C^{\omega}([-1,1])$.
\end{itemize}
\end{proposition}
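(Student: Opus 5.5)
The plan is to follow the strategy of \cite{ElgindiHuangSaidXie_ClassificationSteadyEulerFlows_DMJ}. We first build $F$ in the far field, where the level sets of $\psi$ are graphs. We then propagate the relation $\Delta\psi=F(\psi)$ to all of $\Omega$ using analyticity. Finally we analyze the extremal values $\check A,\hat B$ by a local expansion of $\psi$ at its extremal critical points.

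\textbf{Step 1 (far field).} By \eqref{eq. psi converge to varphi in L infty} and $\varphi'>0$, there is $L>0$ such that $\partial_y\psi\geq\frac12\min\varphi'>0$ on $\{|x|\geq L\}$. Hence for each $x$ with $|x|\geq L$ the map $y\mapsto\psi(x,y)$ is an analytic diffeomorphism onto $[\varphi(-1),\varphi(1)]$; the endpoints are fixed by the boundary data. The steady equation $\nabla^\perp\psi\cdot\nabla\Delta\psi=0$ says $\Delta\psi$ is constant along each level curve. In $\{x\geq L\}$ the level curve $\{\psi=c\}$ is a graph $y=h_c(x)$. Letting $x\to+\infty$ and using the $C^2$ convergence gives $h_c(x)\to\varphi^{-1}(c)$ and $\Delta\psi\to\varphi''(\varphi^{-1}(c))=F_\varphi(c)$.

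So $\Delta\psi=F_\varphi(\psi)$ on $\{x\geq L\}$, and likewise on $\{x\leq -L\}$ with the same $F_\varphi$. Writing $F_\varphi(c)=\Delta\psi(L,Y(c))$, where $Y$ is the analytic inverse of $y\mapsto\psi(L,y)$, shows that $F_\varphi$ is analytic on $[\varphi(-1),\varphi(1)]$. Since $\varphi'$ is a first integral of $\varphi''=F_\varphi(\varphi)$, namely $\tfrac12\varphi'^2=\int F_\varphi(\varphi)+C$ with $\varphi'>0$, the ODE with analytic right-hand side yields $\varphi\in C^\omega([-1,1])$. This proves (4).

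\textbf{Step 2 (global relation).} The Jacobian $J=\nabla^\perp\psi\cdot\nabla\Delta\psi$ vanishes identically, so near every regular point of $\psi$ one has $\Delta\psi=f(\psi)$ for some local analytic $f$. The critical set $\{\nabla\psi=0\}$ is an analytic subvariety of the interior. By Step 1 it lies in the compact region $\{|x|\leq L\}$, so it consists of finitely many isolated points and analytic arcs, and each such arc lies in a single level set.

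I would define $F$ on the interval of regular values by analytic continuation, starting from $F_\varphi$. For a value $c$ outside $\varphi([-1,1])$, the level set $\{\psi=c\}$ consists of closed curves inside $\{|x|\leq L\}$. I would follow the family of nested level curves from a far-field level inward, crossing critical levels, and check that the local $f$'s glue. The key fact to use is that two analytic functions $f_1,f_2$ with $\Delta\psi=f_1(\psi)=f_2(\psi)$ on an open set agree on an open interval of values, hence on the whole connected domain of analyticity.

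\textbf{Main obstacle.} The main difficulty is proving the consistency of $F$ across distinct level-set components carrying the same value, and across saddle-type critical levels. I would first argue that $\Omega$ minus the critical set is connected, and that $\Delta\psi-F(\psi)$ is analytic on the open set where $\psi\in(\check A,\hat B)$. Since it vanishes on the far-field region, unique continuation through the connected region forces it to vanish there. Across an interior critical level $c\in(\check A,\hat B)$ the level set has regular points on both sides, so $F$ continues analytically through $c$. This proves (1).

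\textbf{Step 3 (endpoints).} Suppose $\check A$ is attained at an interior point $p$. If $\nabla^2\psi(p)\neq0$, then $p$ is nondegenerate (a degenerate minimum with $\operatorname{tr}\nabla^2\psi=F(\check A)$ is handled the same way), and the Morse lemma gives $\psi-\check A=Q(z)$ in analytic coordinates; averaging $\Delta\psi$ over level curves then shows that $F$ extends analytically past $\check A$. Otherwise the Taylor expansion at $p$ has leading homogeneous part $P_{2k_0}$ of degree $2k_0\geq4$; it is nonnegative because $p$ is a minimum, and nondegenerate because of the semilinear equation. Matching $\Delta\psi=F(\psi)$ along rays, one finds that $(\psi-\check A)^{1/k_0}$ is an analytic coordinate, in which $\Delta\psi$ expands as an analytic function. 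This gives the Puiseux expansion \eqref{eqn:Puiseuxa} in powers of $(s-\check A)^{1/k_0}$, with leading term of order $(k_0-1)/k_0$ coming from $\Delta P_{2k_0}\neq0$. Consistency between different points of $\psi^{-1}(\check A)$ comes from uniqueness of $F$ on $(\check A,\check A+\epsilon)$. The argument at $\hat B$ is symmetric and yields (3).
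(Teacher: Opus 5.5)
Your Step 1 is sound, and it reaches (4) more directly than the paper does: the paper reads $\varphi''=F(\varphi)$ off the global relation and then invokes elliptic regularity. One small addition is needed there. The $C^2([-1,1])$ norm controls only $y$-derivatives, so you need a short argument that $\psi_{xx}\to0$ along the far-field level curves; boundedness of $\psi(x,y_0)$ as $x\to\infty$ gives this.

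The genuine gap is in Step 2. The claim that $\Omega$ minus the critical set is connected is false in general. $C_\psi$ may contain closed loops, and also arcs that bound a region together with a piece of $\partial\Omega$; each such loop disconnects $\Omega$. Continuing local relations along paths from the far field then never reaches the bounded regions $\Omega_1,\dots,\Omega_n$ enclosed by these loops.

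The same problem appears for values. A level $c\in(\check A,\hat B)$ can be attained only on a critical loop, for example when $\psi-c$ behaves like the cube of the signed distance to the loop, or on a loop of local extrema. Then there are no regular points at level $c$. Nothing in your argument forces the relations on the two sides to be analytic continuations of one another, and for radial flows with a critical circle they are genuinely different. Your fallback, that $\Delta\psi-F(\psi)$ is analytic on $\{\check A<\psi<\hat B\}$, presupposes that $F$ is already defined and analytic on $(\check A,\hat B)$. That is item (1) itself, so the argument is circular.

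The missing idea is the one the paper takes from \cite{ElgindiHuangSaidXie_ClassificationSteadyEulerFlows_DMJ}. By unique continuation and the far-field behaviour, $\psi$ is non-radial in each $\Omega_i$. Theorem 1.1 there then gives $\Delta\psi=F_i(\psi)$ in $\Omega_i$. Proposition 4.9 there shows that $F_i$ is analytic at the trace value of $\psi$ on $\partial\Omega_i$. Hence $\Delta\psi-F_i(\psi)$ is analytic across $\partial\Omega_i$, and unique continuation glues $F_i$ to the outer relation $F_0$.

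The same Proposition 4.9 does two further jobs in the paper:
\begin{itemize}
\item it proves (1), since non-analyticity at an interior value would force a closed critical loop at that level;
\item in (2)--(3), it excludes loops from $\psi^{-1}(\check A)$ and $\psi^{-1}(\hat B)$.
\end{itemize}

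This last point also breaks your Step 3. A non-isolated minimum along a critical loop is not ``handled the same way.'' Near such a loop one can write $\psi-\check A=\xi^2$ with $\xi$ an analytic signed coordinate. Then $\Delta\psi=g(\xi)$ for a single analytic $g$, and the two sides carry the relations $g(\pm\sqrt{s-\check A})$. These coincide only if $g$ is even. Ruling out the odd part is exactly an overdetermined-problem statement of the kind in Proposition 4.9.

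Once loops are excluded, your local Taylor/Puiseux analysis at isolated extremal points is in line with the paper's use of Lemma 4.4 of \cite{ElgindiHuangSaidXie_ClassificationSteadyEulerFlows_DMJ}.
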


We now apply the argument developed in
Sections 2-4 in \cite{ElgindiHuangSaidXie_ClassificationSteadyEulerFlows_DMJ}.
We sketch the proof and explain how the argument can be adapted to the
present unbounded domain.
\begin{proof}
Denote by $C_{\psi}=\{(x,y)\in\Omega:\nabla\psi(x,y)=0\}$ to be the set of critical points of $\psi$. It follows from \eqref{eq. psi converge to varphi in L infty} that all the critical points of $\psi$ are bounded.

By the structure theorem for the zero set of the analytic functions (see \cite{krantz2002primer,sullivan2006combinatorial}), the critical set $C_{\psi}$ is a union of a finite number of closed loops (together with the portion of $\partial \Omega$) and finite isolated points. As the connected component of  $C_{\psi}$ must be a level set of $\psi$,  and $\psi$ takes different values at the upper and lower boundary of $\Omega$, it follows that any connected component of $C_{\psi}$ cannot contain the points both in the upper boundary and lower boundary of $\Omega$.
Moreover, Lemma 3.4 in \cite{ElgindiHuangSaidXie_ClassificationSteadyEulerFlows_DMJ} implies that if the critical set $C_{\psi}$ intersects either the upper or the lower boundary of $\Omega$, the portion of $\partial \Omega$ between the two intersection points has to lie in $C_{\psi}$. Now in particular, the critical set $C_{\psi}$ gives the following decomposition for the channel $\Omega$. 
\begin{figure}[h!]\label{fig:div}
 \caption{An illustration for the division of $\Omega$}
    \centering
\includegraphics[width=0.5\linewidth]{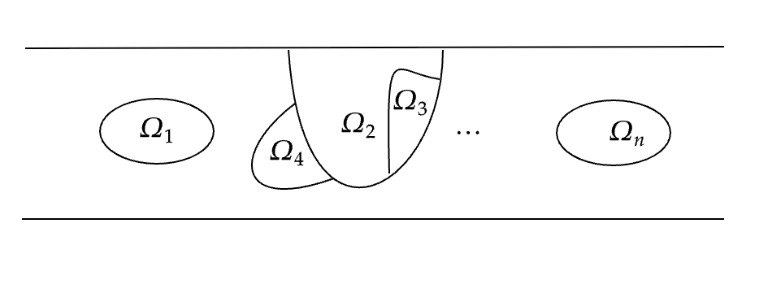}
\end{figure} 
In figure \ref{fig:div}, $\Omega_1,\cdots,\Omega_{n}$ are bounded sets enclosed by the critical points of $\psi$ and there is no closed curve consisting of critical points of $\psi$ in $\Omega_0:=\Omega\setminus \cup_{i=1}^{n} \overline{\Omega_{i}}$, which is path-connected.

First, as $\Omega_0$ is open, connected and contains no closed curve consisting of critical points of $\psi$(in particular, $\Omega_0\setminus C_{\psi}$ is path-connected ), it follows from the proof of  Lemma 1.3 in \cite{ElgindiHuangSaidXie_ClassificationSteadyEulerFlows_DMJ}, there is a continuous function $F_0$ such that $\Delta\psi=F_0(\psi)$ in  $\Omega_0$. Let $[\check{A}_0,\hat{B}_0]$ be the range of $\psi$ in $\overline{\Omega_0}$, by Lemma 2.2 and Lemma 4.4 in \cite{ElgindiHuangSaidXie_ClassificationSteadyEulerFlows_DMJ},  $F_0$ admits Puiseux series expansion in $[\check{A}_0,\hat{B}_0]$

As $\psi$ is analytic, by the unique continuation property of analytic functionss, the condition \eqref{eq. psi converge to varphi in L infty} implies that $\psi$ is nonradial in $\Omega_{i}$  for each $i\in\{1,\cdots,n\}$. By Theorem 1.1 in \cite{ElgindiHuangSaidXie_ClassificationSteadyEulerFlows_DMJ}, there is a function $F_{i}$, such that $\Delta \psi= F_{i}(\psi)$ in each $\Omega_{i}$. 
Let $[\check{A}_i,\hat{B}_i]$ be the range of $\psi$ in $\overline{\Omega_{i}}$, using a similar argument as in $\Omega_{0}$, $F_{i}$ admits Puiseux series expansion in $[\check{A}_i,\hat{B}_i]$. Moreover, as we mentioned that $\psi$ is non-radial in $\Omega_{i}$, by Proposition 4.9 in  \cite{ElgindiHuangSaidXie_ClassificationSteadyEulerFlows_DMJ}, $F_{i}$ is analytic at the trace value of $\psi$ at $\Omega_i$. In particular,  $\Delta \psi -F_{i}(\psi)$ is analytic near $\partial \Omega_i$. Now as $\Delta \psi=F_i(\psi)$ in $\Omega_i$, the unique continuation property of analytic functions implies that $F_0(\psi)=\Delta \psi=F_{i}(\psi)$ near $\partial \Omega_i$ in $\Omega_0$. 

As a result, $F_0$ extends to be a global Puiseux function $F$ and that the value of $F$ restricted to $[\check{A}_i,\hat{B}_i]$ agrees with $F_{i}$. In particular, one has
\begin{equation*}
    \Delta \psi=F(\psi)\quad\mbox{in }\Omega.
\end{equation*}

Denote
\[
\check{A}=\min\{\check{A}_i\}_{i=0}^n\quad \text{and}\quad \hat{B}=\max\{\hat{B}_i\}_{i=0}^n.
\] 
Suppose that $F$ is not analytic at $s\in(\check{A},\hat{B})$, by the continuity of $\psi$ and the structure theorem of the zero set of the analytic functions, the level set $\{\psi=s\}$ contains a closed loop $\Gamma$. Let $\Omega_{\Gamma}^\sharp$ be the simply-connected domain enclosed by $\Gamma$. Notice that $\Delta \psi=F(\psi)$ in $\Omega_{\Gamma}^\sharp$, and $F$ is not analytic at $s$, which is the trace value of $\psi$ in $\Omega_{\Gamma}^\sharp$. This contradicts Proposition 4.9 in \cite{ElgindiHuangSaidXie_ClassificationSteadyEulerFlows_DMJ}. Therefore, $F$ is analytic in $(\check{A},\hat{B})$.
 
    Now $F$ can only lose analyticity at $\check{A}$ or $\check{B}$, which are the global minimum and maximum of $\psi$ in $\Omega$. 
 In the case where $F$ is not analytic at $\check{A}$, similarly as before, the preimage of $\check{A}$ cannot contain a closed loop. By the structure theorem for zero set for analytic function,  the preimage of $\check{A}$ consists of  union of  finitely many isolated critical points. 
 Moreover, Lemma 4.4 in \cite{ElgindiHuangSaidXie_ClassificationSteadyEulerFlows_DMJ} gives the Puiseux expansion of $F$ near $\check{A}$ in the form of equation \eqref{eqn:Puiseuxa}.
 Similarly, if $F$ is not analytic near $\hat{B}$,  the preimage of $\hat{B}$ consists of  union of  finitely many isolated critical points and $F$ admits a Puiseux expansion in the form of equation \eqref{eqn:Puiseuxb} near $\hat{B}$. 
 In particular, we have shown $\Delta \psi=F(\psi)$, with $F$ analytic in $[\varphi(-1),\varphi(1)]$.
 It follows from\eqref{eq. psi converge to varphi in L infty} that one has $\varphi^{''}(y)=F(\varphi)$. Furthermore, the regularity theory of elliptic equations  (see, for example, \cite{Morrey1958Analyticity}) implies that $\varphi \in C^{\omega}([-1,1])$.
\end{proof}
  We now perform a refined analysis of analytic solutions to the semilinear elliptic equation to finish the proof of Proposition \ref{prop:dense-rigidity}.
\subsection{Analysis of the Semilinear Equations}
Based on the maximum principle, we have the following lemma.

\begin{lemma}\label{lem: proof via strong maximum principle}
Given $a\in (-\infty, -1]$ and $b\in [1,\infty)$,  let $\widetilde{\varphi}:[a,b]\to\mathbb{R}$ be a $C^2$ function such that
\begin{equation*}
    \widetilde{\varphi}'(t) > 0, \qquad 
    \widetilde{\varphi}''(t) = F(\widetilde{\varphi}(t)), 
    \quad \text{for all } t\in[a,b],
\end{equation*}
where $F$ is a continuous function on $[\tilde{\varphi}(a), \tilde{\varphi}(b)]$. 
Let $\psi$ be a $C^2$ solution to the following problem:
\begin{equation}\label{eq:semilinear-f}
    \left\{\begin{aligned}
        &\Delta\psi = F(\psi)\quad\mbox{in }\Omega = \mathbb{R}\times(-1,1),\\
        &\psi(x,\pm1)\equiv\widetilde{\varphi}(\pm1),\\
        &\|\psi(x,\cdot)-\widetilde{\varphi}(\cdot)\|_{C^{2}([-1,1])}\to0,\quad\mbox{as } |x|\to+\infty.
    \end{aligned}\right.
\end{equation}
If the range of $\psi$ is contained in $[\widetilde{\varphi}(a),\widetilde{\varphi}(b)]$, then $\psi(x,y)\equiv \widetilde{\varphi}(y)$.
\end{lemma}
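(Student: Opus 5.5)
The plan is a sliding (translation-in-$y$) comparison argument. The family of shifted profiles $\widetilde{\varphi}(y+t)$ consists of one-dimensional solutions of the same equation $\Delta w=F(w)$, and they are strictly ordered in $t$ because $\widetilde{\varphi}'>0$. For $t\in[a-1,b+1]$, set
\begin{equation*}
w_t(x,y):=\widetilde{\varphi}\big(\max\{a,\min\{b,y+t\}\}\big),
\end{equation*}
although it is cleaner to work only with $t\in[a+1,b-1]$, where $y+t\in[a,b]$ for every $y\in[-1,1]$ and $w_t$ is a genuine solution. I would show $\psi\le \widetilde{\varphi}(y+t)$ for all such $t\ge 0$ and $\psi\ge\widetilde{\varphi}(y+t)$ for all $t\le 0$. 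Letting $t\to0$ then gives $\psi\equiv\widetilde{\varphi}$. By symmetry, it suffices to treat the upper bound.

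\textbf{Step 1 (start of sliding).} Take $t_0=b-1\ge 0$, so that $\widetilde{\varphi}(y+t_0)\ge \widetilde{\varphi}(y-1+b)$. The range hypothesis gives $\psi\le\widetilde{\varphi}(b)$, but this does not directly compare $\psi$ with $\widetilde{\varphi}(y+b-1)$. I would therefore extend the sliding to $t\in[b-1, b-a]$ by using the truncated profile $\min\{\widetilde{\varphi}(y+t),\widetilde{\varphi}(b)\}$. For $t=b+1$ this profile is $\equiv\widetilde{\varphi}(b)\ge\psi$. As $t$ decreases, the truncated part is the constant $\widetilde{\varphi}(b)$, which dominates $\psi$ by the range assumption, so contact can only occur where $y+t<b$, i.e.\ where $w_t$ solves the equation. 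With this choice, the comparison holds at $t=b+1$ trivially.

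\textbf{Step 2 (continuation).} Let $t_*=\inf\{t\ge0:\ \psi\le w_s \text{ on }\Omega\ \forall s\ge t\}$, and suppose $t_*>0$. On the boundary we have $\psi(x,1)=\widetilde{\varphi}(1)<\widetilde{\varphi}(1+t_*)$ and $\psi(x,-1)=\widetilde{\varphi}(-1)<\widetilde{\varphi}(-1+t_*)$ when $t_*>0$, with strict margins. At $|x|\to\infty$, the uniform convergence $\psi(x,\cdot)\to\widetilde{\varphi}$ together with $\widetilde{\varphi}(y)<\widetilde{\varphi}(y+t_*)-c(t_*)$ gives a strict gap outside some compact set $[-R,R]\times[-1,1]$. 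Hence, if $\sup(\psi-w_{t_*})=0$, then either contact occurs at an interior point of $[-R,R]\times(-1,1)$, or, by compactness, strict inequality holds with a uniform gap, which persists for slightly smaller $t$ and contradicts the minimality of $t_*$.

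\textbf{Step 3 (the obstacle: strong maximum principle with merely continuous $F$).} At an interior contact point, $z=w_{t_*}-\psi\ge0$ satisfies $\Delta z=F(w)-F(\psi)$, and $F$ is only continuous, so the usual linearization is not available. I would instead use the structure of $w$: at the contact point $p$ we have $\nabla z(p)=0$, hence $\nabla\psi(p)=(0,\widetilde{\varphi}'(y_p+t_*))\neq0$. Near $p$, $\psi$ therefore has nonvanishing gradient, and its level sets are graphs over $x$. Since $\psi$ and $w$ solve the same equation with nondegenerate gradients, I would pass to hodograph-type coordinates in which $y$ is a function of $(x,\psi)$. There the equation becomes a quasilinear elliptic equation in which $F$ enters only as a function of the independent variable, so the difference of the two solutions satisfies a linear elliptic equation with continuous coefficients. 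Alternatively, one can write the equation for the height function $y=Y(x,s)$ of the level sets: $w$ corresponds to $Y=\widetilde{\varphi}^{-1}(s)-t_*$, and $Y_\psi\ge Y_w$ with equality at $p$. The Hopf/strong maximum principle then yields $Y_\psi\equiv Y_w$ near $p$, and by connectedness $\psi\equiv w_{t_*}$ on all of $\Omega$. This contradicts the boundary values, since $\widetilde{\varphi}(1)\ne\widetilde{\varphi}(1+t_*)$.

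Consequently $t_*=0$, which gives $\psi\le\widetilde{\varphi}$; the symmetric argument gives $\psi\ge\widetilde{\varphi}$. The key difficulty, and the step I would spend the most care on, is the hodograph strong maximum principle in Step 3, since continuity of $F$ alone does not permit the standard linearization.
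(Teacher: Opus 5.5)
Your proof is correct in substance, but it takes a genuinely different route from the paper's.

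\textbf{The paper's route.} The paper does not slide. The range hypothesis lets it invert globally, $\tilde h:=\widetilde{\varphi}^{-1}(\psi)$, and it sets $h:=\tilde h-y$, so that $\psi=\widetilde{\varphi}(y+h)$. The relation $\widetilde{\varphi}''(\tilde h)=F(\psi)$ then cancels the nonlinearity. What remains is the linear drift equation $\Delta h+\mathbf{V}\cdot\nabla h=0$, with $\mathbf{V}=\frac{\widetilde{\varphi}''(\tilde h)}{\widetilde{\varphi}'(\tilde h)}(h_x,2+h_y)$, together with $h=0$ on $y=\pm1$ and $h\to0$ as $|x|\to\infty$. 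The maximum principle gives $h\equiv0$.

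\textbf{How the two routes relate.} Your shifted profiles $\widetilde{\varphi}(y+t)$ are exactly the constants $h\equiv t$, and constants solve this equation. So one application of the maximum principle replaces your whole continuation in $t$ and all of the contact analysis. Your hodograph step is a local version of the same mechanism. In the variables $(x,s)$ the nonlinearity becomes a coefficient $F(s)$, so the difference of the two level-set functions satisfies a linear equation with bounded first-order coefficients and no zeroth-order term. You correctly identified this as the way around the failure of linearization when $F$ is merely continuous. The paper's substitution is shorter and needs neither $\nabla\psi\neq0$ nor contact points. Your argument costs more bookkeeping but is a more flexible comparison template.

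\textbf{One piece of bookkeeping needs repair.} The range hypothesis gives only $\psi\le\widetilde{\varphi}(b)$, with equality allowed; when $b=1$, equality is forced on $y=1$. So your claim that contact with $\min\{\widetilde{\varphi}(y+t),\widetilde{\varphi}(b)\}$ can only occur where $y+t<b$ does not follow. Contact may occur where $y+t_*\ge b$. There $w_{t_*}$ has zero gradient or a kink, so Step 3 does not apply. In addition, for $b=1$ the strict margins at $y=1$ and near $y=1$ at infinity claimed in Step 2 are false. The fix is easy:
\begin{itemize}
\item The flat part of the profile dominates $\psi$ automatically. So only contacts in $\overline{\Omega}\cap\{y+t_*\le b\}$ can stop the sliding, and a uniform gap there crosses the thin strip $\{b-t_*<y<b-t\}$ by uniform continuity. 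On this set the margins at $y=-1$ and at infinity are strict.
\item A contact point $p$ on the kink line $y+t_*=b$ is an interior maximum of $\psi$, so $\nabla\psi(p)=0$. But comparing along $\{x=x_p,\ y<y_p\}$ gives $\partial_y\psi(p)\ge\widetilde{\varphi}'(b)>0$, a contradiction.
\item The hodograph conclusion should read $\psi\equiv w_{t_*}$ on the strip $\{y+t_*<b\}$, and the contradiction comes from $y=-1$.
\end{itemize}
Simpler still, drop the truncation altogether. Extend $\widetilde{\varphi}$ to a strictly increasing $C^2$ function on $\mathbb{R}$, and extend $F$ by $\widetilde{\varphi}''\circ\widetilde{\varphi}^{-1}$. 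Since $\psi$ takes values in $[\widetilde{\varphi}(a),\widetilde{\varphi}(b)]$, it still solves $\Delta\psi=F(\psi)$. Every shifted profile is then a genuine solution with strict margins for $t\neq0$.
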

\begin{proof}
Define a scalar function $\tilde{h}(x,y)\in [a,b]$ implicitly by
\begin{equation*}
    \widetilde{\varphi}\big(\tilde{h}(x,y)\big) = \psi(x,y).
\end{equation*}
Since $\widetilde{\varphi}'(t)>0$ and the range of $\psi$ lies in $(\widetilde{\varphi}(a),\widetilde{\varphi}(b))$, the function $\tilde{h}$ is well-defined. Denote $h(x,y):=\tilde{h}(x,y)-y$.  By the implicit function theorem, one has $h\in C^2(\Omega)$. Furthermore, the straightforward computations yield 
\begin{align*}
    &\psi_x = \widetilde{\varphi}'(\widetilde{h}) h_x, 
    \qquad \psi_y = \widetilde{\varphi}'(\widetilde{h}) (1 + h_y), \\
    &\psi_{xx} = \widetilde{\varphi}''(\widetilde{h}) h_x^2 + \widetilde{\varphi}'(\widetilde{h}) h_{xx}, \quad \psi_{yy} = \widetilde{\varphi}''(\widetilde{h}) (1 + h_y)^2 + \widetilde{\varphi}'(\widetilde{h}) h_{yy},
\end{align*}
and 
\begin{align*}
    \Delta\psi 
    &= \psi_{xx} + \psi_{yy} = \widetilde{\varphi}'(\widetilde{h})\Delta h 
       + \widetilde{\varphi}''(\widetilde{h})\bigl[h_x^2 + (1+h_y)^2\bigr].
\end{align*}
Using the equation \eqref{eq:semilinear-f} and the identity $\widetilde{\varphi}''(\tilde{h})=F(\widetilde{\varphi}(\tilde{h}))=F(\psi)$ gives $\Delta\psi = \widetilde{\varphi}''(\widetilde{h})$. Therefore, one has the following identity
\[
\widetilde{\varphi}''(\widetilde{h})
    = \widetilde{\varphi}'(\widetilde{h})\Delta h 
      + \widetilde{\varphi}''(\widetilde{h})\bigl[h_x^2 + (1+h_y)^2\bigr].
\]
Rearranging the above identity gives the following second order elliptic equation  
\[
\Delta h + \mathbf{V}\cdot\nabla h = 0,\quad\mbox{where }\mathbf{V} := \frac{\widetilde{\varphi}''(\widetilde{h})}{\widetilde{\varphi}'(\widetilde{h})} 
\bigl(h_x,\, 2+h_y\bigr)\in C^{1}(\Omega).
\]

By construction, $h(x,\pm1)\equiv0$ and $h(x,y)\to 0$ as $|x|\to+\infty$. Applying the strong maximum principle yields $h\equiv 0$, and hence $\psi(x,y)\equiv \widetilde{\varphi}(y)$.
\end{proof}
An immediate consequence of Lemma \ref{lem: proof via strong maximum principle} is the following comparison principle for analytic solutions of \eqref{eqn:Steady in channel}.
\begin{proposition}\label{prop:rigidity-from-range}
In the setting of Proposition~\ref{prop:dense-rigidity}, let $\widetilde{\varphi}$ denote the maximal analytic extension of $\varphi$ that remains monotone increasing. If the range of every element in $\mathcal{A}(\varphi)$ is contained in the range of $\widetilde{\varphi}$, then $\mathcal{A}(\varphi)=\{\varphi\}$.
\end{proposition}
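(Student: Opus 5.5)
The plan is to combine Proposition~\ref{prop. semilinear equation} with Lemma~\ref{lem: proof via strong maximum principle}. Let $\psi\in\mathcal{A}(\varphi)$ be arbitrary. By Proposition~\ref{prop. semilinear equation}, there is a continuous function $F$ on the range $[\check{A},\hat{B}]$ of $\psi$ with $\Delta\psi=F(\psi)$ in $\Omega$. This $F$ is analytic on $(\check{A},\hat{B})$ and admits Puiseux expansions at the endpoints. Moreover $\varphi''=F(\varphi)$ on $[-1,1]$, and $\varphi$ is analytic there. Since $\varphi'>0$, the profile $\varphi$ extends analytically past $\pm1$ as a solution of the same ODE $\widetilde\varphi''=F(\widetilde\varphi)$, at least while $\widetilde\varphi$ stays in the region where $F$ is analytic. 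Let $\widetilde\varphi$ be the maximal monotone analytic extension, defined on an interval $(a^*,b^*)\supseteq[-1,1]$.

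The first step is to identify the ODE satisfied by $\widetilde\varphi$ with the nonlinearity $F$ coming from $\psi$. On $[-1,1]$ we have $\varphi''=F(\varphi)$, and $\varphi([-1,1])\subseteq[\check A,\hat B]$ because $\psi$ tends to $\varphi$ at infinity. The extension $\widetilde\varphi$ is determined by $\varphi$ alone, while $F$ is analytic on $(\check A,\hat B)$. Analytic continuation of the identity $\widetilde\varphi''=F(\widetilde\varphi)$, viewed along $t$, then shows it persists for all $t$ with $\widetilde\varphi(t)\in(\check A,\hat B)$. Concretely, $F\circ\varphi^{-1}$ and $t\mapsto\widetilde\varphi''(\widetilde\varphi^{-1}(s))$ are analytic functions of $s$ that agree on $\varphi((-1,1))$. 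Hence they agree on the connected range where both are defined, which is $\widetilde\varphi((a^*,b^*))\cap(\check A,\hat B)$.

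The second step uses the hypothesis. The range $[\check A,\hat B]$ of $\psi$ lies in the range of $\widetilde\varphi$. Assuming this containment is in the closure sense, we pick $a\le-1$ and $b\ge1$ with $\widetilde\varphi(a)\le\check A$ and $\widetilde\varphi(b)\ge\hat B$, and $\widetilde\varphi'>0$ on $[a,b]$. By the first step, $\widetilde\varphi''=F(\widetilde\varphi)$ on $[a,b]$ wherever $\widetilde\varphi\in[\check A,\hat B]$, using continuity at the endpoints.

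To apply Lemma~\ref{lem: proof via strong maximum principle} exactly as stated, we need $F$ defined on all of $[\widetilde\varphi(a),\widetilde\varphi(b)]$. We therefore redefine $F$ outside $[\check A,\hat B]$ by $F(s):=\widetilde\varphi''(\widetilde\varphi^{-1}(s))$. This is consistent with $\psi$, since $\psi$ never takes those values. The lemma then gives $\psi\equiv\widetilde\varphi(y)=\varphi(y)$ on $\overline\Omega$. Since $\psi$ was arbitrary, $\mathcal{A}(\varphi)=\{\varphi\}$.

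The main obstacle I expect is the endpoint issue. If $\check A$ or $\hat B$ equals an endpoint value of the maximal extension that is not attained, the lemma's requirement that $\widetilde\varphi$ be $C^2$ with $\widetilde\varphi'>0$ on a closed interval $[a,b]$ fails. A similar problem arises if $F$ is only Puiseux at $\check A$, where analytic continuation of $\widetilde\varphi$ stops. I would handle this by interpreting ``contained in the range'' as containment in the open range $\widetilde\varphi((a^*,b^*))$. If that is too strong, I would instead rework the proof of Lemma~\ref{lem: proof via strong maximum principle} directly. The function $h$ is still defined wherever $\psi$ lies in the open range, and the maximum principle argument only needs interior points. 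Points where $\psi$ attains $\check A$ are degenerate critical points by Proposition~\ref{prop. semilinear equation}(2), and these can be excluded by an approximation argument.
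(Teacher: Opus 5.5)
Your proposal is correct and follows the paper's route. The paper treats this proposition as an immediate consequence of Proposition~\ref{prop. semilinear equation} and Lemma~\ref{lem: proof via strong maximum principle}, and it obtains $\widetilde{\varphi}''=F(\widetilde{\varphi})$ by unique continuation exactly as you do (this step is spelled out in the proof of Lemma~\ref{thm. the rigidity theorem with symmetry breaking condition}). Your reading of the hypothesis as $[\check{A},\hat{B}]\subseteq\widetilde{\varphi}((a^*,b^*))$ is also the one the paper effectively uses: Lemma~\ref{thm. the rigidity theorem with symmetry breaking condition} adds $\widetilde{\varphi}'>0$ on $\widetilde{\varphi}^{-1}([\check{A},\hat{B}])$, so your fallback approximation argument is never needed.
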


Section~2 shows that the range of elements in $\mathcal{A}(\varphi)$ is not necessarily contained in the range of $\widetilde{\varphi}$. In the remainder of this section, we identify sufficient conditions on $\varphi$ under which the range of an element in $\mathcal{A}(\varphi)$ is contained in the range of  $\widetilde{\varphi}$. 

We perform a refined analysis of the asymptotic data $\varphi$ and its analytic continuation. We begin by introducing the following notions.

\begin{definition}[Maximal interval]\label{def. maximal interval}
Let $\varphi\colon [-1,1]\to\mathbb{R}$ be an analytic function satisfying $\varphi'(y)>0$ for $y\in[-1,1]$. An open interval $(a,b)$ is called the \emph{maximal interval} of $\varphi$ if the following statements hold.
\begin{itemize}
    \item[(1)] $\varphi$ admits a unique analytic monotone extension $\widetilde \varphi$ in $(a,b)$, such that $\widetilde{\varphi}\equiv \varphi$ in $[-1,1]$ and $\widetilde \varphi'(y)>0$ for all $y\in(a,b)$;
    \item[(2)] If there exists an interval $(a',b')$ such that $\varphi$ satisfies the properties in \textup{(1)} in $(a',b')$, then $(a',b')\subseteq (a,b)$.
\end{itemize}
\end{definition}

We now define the notion of symmetry-breaking condition at the endpoints.

\begin{definition}[Admissible asymptotic shear flow]\label{def. symmetry breaking}
Let $\varphi$ be analytic in $[-1,1]$ and satisfy $\varphi'(y)>0$ for $y\in [-1,1]$, let $(a,b)$ be the maximal interval of $\varphi$, and let $\widetilde \varphi$ be its analytic extension. We say that $\varphi$  \emph{is an admissible asymptotic shear flow} if, at each endpoint, one of the following corresponding conditions holds. 
\begin{itemize}
 \item[(1)]
 $b<+\infty$ ($a>-\infty$, respectively) and $\widetilde{\varphi}$ is not $C^2$ near $b$ ($a$, respectively).
    \item[(2)] $b<+\infty$ ($a>-\infty$, respectively) is a critical endpoint of $\widetilde \varphi$, and $\widetilde \varphi$ admits an analytic extension beyond $b$ ($a$, respectively) that is not even with respect to $b$ ($a$, respectively).
    \item[(3)] $b<+\infty$ ($a>-\infty$, respectively) is a critical endpoint of $\widetilde \varphi$, and $\widetilde \varphi$ admits no analytic extension beyond $b$ ($a$, respectively).
    \item[(4)] $b<+\infty$ ($a>-\infty$, respectively) is not a critical endpoint of $\widetilde \varphi$, and $\widetilde \varphi$ admits no analytic extension beyond $b$ ($a$, respectively).
    \item[(5)] $b=+\infty$ ($a=-\infty$, respectively) and
    \[
    \lim_{y\to+\infty}\widetilde \varphi(y)=+\infty
    \qquad
    (\lim_{y\to-\infty}\widetilde \varphi(y)=-\infty, \text{ respectively}).
    \]
\end{itemize}
\end{definition}
\begin{remark}
    The behavior of $\widetilde{\varphi}$ at $a$ and at $b$ is allowed to be different. For example, if $\widetilde{\varphi}$ satisfies case (1) at $a$ and satisfies case (2) at $b$ in Definition~\ref{def. symmetry breaking}, then $\varphi$ is still admissible.
\end{remark}
From Definition~\ref{def. symmetry breaking}, one has the following ``openness" of the admissible condition. The proof is left to the reader.
\begin{lemma}\label{rmk. admissible is open}
    Assume that $(a,b)$ is the maximal interval of $\varphi$, and that $(a_{\epsilon},b_{\epsilon})$ is the maximal interval of $\varphi_{\epsilon}=\varphi+\epsilon y$ with $0<\epsilon\ll1$. If $\varphi$ is admissible at $a$ (or $b$), then $\varphi_{\epsilon}$ is admissible at $a_{\epsilon}$ (or $b_{\epsilon}$).
\end{lemma}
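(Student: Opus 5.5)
The plan is to compare the maximal analytic monotone extensions of $\varphi$ and $\varphi_\epsilon=\varphi+\epsilon y$ directly. Since $\epsilon y$ is entire, the analytic continuation of $\varphi_\epsilon$ along the real axis is $\widetilde\varphi+\epsilon y$ wherever $\widetilde\varphi$ continues, and it has exactly the same singularities. As $\widetilde\varphi_\epsilon'=\widetilde\varphi'+\epsilon>\epsilon>0$ on $(a,b)$, we get $(a,b)\subseteq(a_\epsilon,b_\epsilon)$. I only treat the endpoint $b$; the endpoint $a$ is symmetric. I go through the cases of Definition~\ref{def. symmetry breaking} in turn.

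\textbf{Cases (1), (3), (4), (5).} In cases (1), (3) and (4), $\widetilde\varphi$ has no analytic continuation past $b$. In case (1) this is because it is not even $C^2$ there. Hence neither does $\widetilde\varphi_\epsilon$, and so $b_\epsilon=b$.
\begin{itemize}
\item In case (1), adding $\epsilon y$ does not change $\widetilde\varphi''$, so $\widetilde\varphi_\epsilon$ is still not $C^2$ near $b$ and case (1) holds for $\varphi_\epsilon$.
\item In cases (3) and (4), $\widetilde\varphi_\epsilon'(b^-)=\widetilde\varphi'(b^-)+\epsilon\ge\epsilon>0$ whenever this limit exists. Then $b$ is a non-critical endpoint with no analytic extension, which is case (4), or $\widetilde\varphi_\epsilon$ fails to be $C^2$ there, which is case (1).
\item In case (5), $\widetilde\varphi_\epsilon'>0$ on $(a,\infty)$, so $b_\epsilon=+\infty$, and $\widetilde\varphi_\epsilon\ge\widetilde\varphi\to+\infty$.
\end{itemize}

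\textbf{Case (2), first reductions.} Here $\widetilde\varphi'(b)=0$ and $\widetilde\varphi$ extends analytically past $b$ without being even about $b$. Write $g(t)=\widetilde\varphi'(b+t)=c\,t^m+O(t^{m+1})$ with $c\neq0$ and $c(-1)^m>0$.

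If $m$ is odd (so $c<0$), then $g$ becomes negative for small $t>0$. The first solution of $g=-\epsilon$ gives $b_\epsilon=b+\delta_\epsilon$, with $\delta_\epsilon\sim(\epsilon/|c|)^{1/m}\to0$. Moreover $\widetilde\varphi_\epsilon$ is analytic near $b_\epsilon$ and $b_\epsilon$ is a critical endpoint. It remains to rule out evenness of $\widetilde\varphi_\epsilon$ about $b_\epsilon$ for small $\epsilon$. Suppose instead that evenness holds along a sequence $\epsilon_k\to0$, i.e.
\[
\widetilde\varphi_{\epsilon_k}(b_{\epsilon_k}+t)=\widetilde\varphi_{\epsilon_k}(b_{\epsilon_k}-t)\quad\text{on a fixed interval }|t|<r .
\]
Here $r$ is independent of $k$ because $\widetilde\varphi$ is analytic on a fixed neighbourhood of $b$. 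Since $b_{\epsilon_k}\to b$ and $\widetilde\varphi_{\epsilon_k}\to\widetilde\varphi$ locally uniformly, passing to the limit gives $\widetilde\varphi(b+t)=\widetilde\varphi(b-t)$. This contradicts case (2) for $\varphi$. So case (2) holds for $\varphi_\epsilon$ at $b_\epsilon$ for all small $\epsilon$.

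\textbf{Case (2) with $m$ even: the main obstacle.} If $m$ is even, then $c>0$ and $\widetilde\varphi'\ge0$ on both sides of $b$. The maximal interval of $\varphi_\epsilon$ then jumps past $b$, and $b_\epsilon$ converges to the first point $b^*>b$ at which $\widetilde\varphi'$ changes sign or $\widetilde\varphi$ ceases to be analytic, or to $+\infty$. The point is that admissibility of $\varphi$ at $b$ gives no direct information at $b^*$. My plan is to rerun the case analysis at $b^*$:
\begin{itemize}
\item Singular or infinite behaviour at $b^*$ is handled exactly as in cases (1), (3), (4), (5) above.
\item If $b^*$ is an odd-order zero of $\widetilde\varphi'$, I repeat the limiting argument. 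If $\widetilde\varphi$ is not even about $b^*$, the argument goes through verbatim.
\item The delicate sub-case is when $\widetilde\varphi$ is even about $b^*$, i.e. $g^*(t):=\widetilde\varphi'(b^*+t)$ is odd.
\end{itemize}

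\textbf{The sub-case $g^*$ odd.} Evenness of $\widetilde\varphi_\epsilon$ about $b_\epsilon=b^*+\delta$ means
\[
g^*(\delta+t)+g^*(\delta-t)=-2\epsilon .
\]
Expanding in $t$, this forces every even derivative of $g^*$ of order at least $2$ to vanish at $\delta$; in particular $(g^*)''(\delta)=0$. If this happens for a sequence $\delta_k\to0$ with $\delta_k\neq0$, then the analytic function $(g^*)''$ vanishes identically. Hence $g^*$ is linear, so $\widetilde\varphi'$ is affine with a simple zero only at $b^*$. That is incompatible with $\widetilde\varphi'$ having the even-order zero at $b<b^*$. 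Therefore, for $\epsilon$ small, $\widetilde\varphi_\epsilon$ is not even about $b_\epsilon$ and case (2) holds. I expect this bookkeeping of where $b_\epsilon$ lands, and this last non-evenness step, to be the main difficulty; everything else is a direct check.
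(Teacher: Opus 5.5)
Your proof is correct. The paper gives no proof to compare it with: the lemma is stated with ``The proof is left to the reader,'' and your case analysis fills that gap.

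The most useful thing your write-up exposes is the following. When $b$ is an even-order zero of $\widetilde\varphi'$ (case (2) with $m$ even), the lemma is not a local perturbation statement at all. In that case $b_\epsilon$ jumps to the next point $b^*$ at which one of three things happens: $\widetilde\varphi'$ changes sign, $\widetilde\varphi$ becomes singular, or $b^*=+\infty$. Admissibility at $b^*$ has to be proved from scratch, because nothing is assumed about $\varphi$ there. The paper's word ``openness'' hides this.

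Your treatment of the hardest sub-case, where $\widetilde\varphi$ is even about $b^*$, uses essentially the same mechanism the paper uses in Case 2 of the proof of Proposition~\ref{prop:dense-rigidity}.
\begin{itemize}
\item The paper uses that $\varphi$ is not quadratic to pick the first nonvanishing derivative of order $k\geq 4$ at the even endpoint. It then notes that $\widetilde\varphi^{(k-1)}$ has a simple zero there, so $\widetilde\varphi^{(k-1)}(a_\epsilon)\neq 0$ at the shifted endpoint. This holds directly for every small $\epsilon$.
\item You observe that evenness about $b^*+\delta$ forces $\widetilde\varphi'''(b^*+\delta)=0$. Accumulation of such $\delta\neq0$ would then make $\widetilde\varphi'$ affine, which the extra zero of $\widetilde\varphi'$ at $b<b^*$ rules out. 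This needs a contradiction sequence and the identity theorem.
\end{itemize}
The two arguments are equivalent in substance. Yours gets the ``not quadratic'' condition for free from the geometry. Your limiting argument for the non-even cases (odd $m$, and $b^*$ where $\widetilde\varphi$ is not even) is an additional, clean device not present in the paper.

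Two small bookkeeping points need fixing.
\begin{itemize}
\item When $b^*=+\infty$ you cannot cite $\widetilde\varphi\to+\infty$ ``as in case (5)''. Instead, $\widetilde\varphi'\geq0$ on $(b,\infty)$ gives $\widetilde\varphi_\epsilon(y)\geq\widetilde\varphi(b)+\epsilon y\to+\infty$.
\item At a singular $b^*$, the bound $\widetilde\varphi_\epsilon'(b^{*-})\geq\epsilon$ (when the limit exists) relies on $\widetilde\varphi'\geq0$ on $(b,b^*)$. This holds by your definition of $b^*$, and should be stated explicitly rather than folded into ``exactly as in cases (1), (3), (4)''.
\end{itemize}
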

\begin{lemma}\label{thm. the rigidity theorem with symmetry breaking condition}
    Assume that $\varphi(y)$ is analytic in $[-1,1]$ with $\varphi'(y)>0$ for $y\in [-1,1]$, and let $(a,b)$ be its maximal interval, $\widetilde{\varphi}$ be the maximal monotone extension. If $\varphi$ is an admissible asymptotic shear flow and $\psi$ is an analytic solution of \eqref{eqn:Steady in channel}, then the range of $\psi$ (say, $[\check{A},\hat{B}]$) is contained in the range of $\widetilde{\varphi}$, and  $\widetilde{\varphi}^{'}(y)>0$ for all $y$ satisfying $\widetilde{\varphi}(y)\in [\check{A},\hat{B}]$.
\end{lemma}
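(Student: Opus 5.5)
By Proposition~\ref{prop. semilinear equation}, $\Delta\psi=F(\psi)$ in $\Omega$. Here $F$ is analytic on $(\check{A},\hat{B})$, admits Puiseux expansions \eqref{eqn:Puiseuxa}, \eqref{eqn:Puiseuxb} at the endpoints, and $\varphi''=F(\varphi)$ on $[-1,1]$.

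The plan is to follow the one-dimensional solution $\widetilde\varphi''=F(\widetilde\varphi)$ outward from $[-1,1]$ and compare it with the maximal monotone extension of Definition~\ref{def. maximal interval}. I treat the upper endpoint $b$; the lower endpoint $a$ is symmetric. Since $[\varphi(-1),\varphi(1)]\subset[\check{A},\hat{B}]$ and $F$ is analytic in the interior, the ODE solution continues analytically and coincides with $\widetilde\varphi$ as long as its values stay in $(\check{A},\hat{B})$ and $\widetilde\varphi'>0$. Define $y^*=\sup\{y\ge1:\widetilde\varphi'>0 \text{ on }[1,y],\ \widetilde\varphi([1,y])\subset[\check{A},\hat{B}]\}$. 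It suffices to show that $\widetilde\varphi(y)\to\hat{B}$ as $y\uparrow y^*$, with $y^*\le b$ and $\widetilde\varphi'>0$ wherever $\widetilde\varphi<\hat{B}$. Suppose instead the continuation stops before reaching $\hat{B}$. Then either (i) $\widetilde\varphi'(y^*)=0$ with $\widetilde\varphi(y^*)<\hat{B}$, or (ii) $y^*=+\infty$ with $\widetilde\varphi$ bounded.

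\textbf{Case (i).} At an interior value, $F$ is analytic, so the ODE solution is even about $y^*$. Hence $y^*$ is a critical endpoint admitting an even analytic extension, and $y^*=b$ by maximality. This contradicts each of cases (2)--(4) of Definition~\ref{def. symmetry breaking}. It also contradicts case (1), because the ODE solution is $C^2$ up to $b$, and case (5), because $b$ is finite.

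\textbf{Case (ii).} A bounded monotone solution on a half line has $\widetilde\varphi'\to0$ and $F(\lim\widetilde\varphi)=0$. This is compatible only with $b=+\infty$ and a finite limit, which case (5) excludes.

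\textbf{Endpoint $\hat{B}$.} It remains to handle the approach to $\hat{B}$, where $F$ may have the Puiseux singularity \eqref{eqn:Puiseuxb}. If $F$ is analytic at $\hat{B}$, the same argument applies: one either passes $\hat{B}$ monotonically, or stops with an even critical extension, which is excluded as above. If $F$ is not analytic at $\hat{B}$, the expansion $F(s)\sim b_{k_1-1}(\hat{B}-s)^{(k_1-1)/k_1}$ has to be integrated. The energy identity $\tfrac12\widetilde\varphi'^2=\int F$ gives either $\widetilde\varphi'>0$ at the value $\hat{B}$, or a critical endpoint whose extension is non-analytic or non-even. In either situation the values up to $\hat{B}$ are attained with $\widetilde\varphi'>0$ before or at $b$, which is what we need. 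This gives $[\check{A},\hat{B}]\subset\widetilde\varphi((a,b))$, with positive derivative on the preimage, when one includes the endpoint value as a limit where needed.

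\textbf{Main obstacle.} The hardest step is excluding the scenario in which $\widetilde\varphi$ reaches a critical point at an interior value of the range of $\psi$ even though the analytic continuation of $\varphi$ is not even. I plan to use the uniqueness of solutions to $\widetilde\varphi''=F(\widetilde\varphi)$ with data $(\widetilde\varphi(y^*),0)$ at points where $F$ is analytic. The delicate part is matching the \emph{a priori} unrelated maximal analytic extension of $\varphi$, which may be defined using values outside the range of $\psi$, with the ODE continuation. I would argue that the two agree by the identity theorem on $[-1,1]$, so the ODE continuation is exactly $\widetilde\varphi$ on the common interval.
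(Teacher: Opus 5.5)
Your treatment of the interior of the range is sound and matches the paper in substance. Your Case (i) handles a critical point at a value where $F$ is analytic: ODE uniqueness makes the continuation even there, which forces that point to be $b$ and contradicts every option of Definition~\ref{def. symmetry breaking}. Your Case (ii) is correctly excluded by option (5).

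The gap is at the endpoint value $\hat{B}$ when $F$ has the Puiseux singularity \eqref{eqn:Puiseuxb}. Your reduction only asks for $\widetilde\varphi'>0$ where $\widetilde\varphi<\hat{B}$, but that is automatic on $(a,b)$ from the definition of the maximal interval. The real content of the lemma is positivity also at the preimage of $\hat{B}$, in particular $\widetilde\varphi'(b)>0$ when $\widetilde\varphi(b)=\hat{B}$. This is exactly what Lemma~\ref{lem: proof via strong maximum principle} needs, since it requires $\widetilde\varphi'>0$ on a closed interval whose image contains the range of $\psi$. In your last step you accept a ``critical endpoint whose extension is non-analytic or non-even'' as compatible with the conclusion. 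In that scenario, however, $\widetilde\varphi'(b)=0$ while $\widetilde\varphi(b)=\hat{B}\in[\check{A},\hat{B}]$, so the conclusion fails. Your claim that the values are attained ``with $\widetilde\varphi'>0$ before or at $b$'' is therefore false there. This case has to be excluded, not accommodated.

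The missing idea is that this alternative never occurs: the energy identity forces the extension to be analytic and even. Suppose $\widetilde\varphi(b)=\hat{B}$ and $\widetilde\varphi'(b)=0$.
\begin{itemize}
\item Integrating \eqref{eqn:Puiseuxb} gives $(\widetilde\varphi')^{2}=\sum_{k\ge 2k_1-1}a_k(\hat{B}-\widetilde\varphi)^{k/k_1}$ with $a_{2k_1-1}\neq0$, and in fact $a_{2k_1-1}>0$.
\item Setting $\eta=(\hat{B}-\widetilde\varphi)^{1/(2k_1)}$ turns this into the regular ODE $\eta'=-\frac{1}{2k_1}\big(\sum_{k}a_k\eta^{2(k-2k_1+1)}\big)^{1/2}$ with $\eta(b)=0$. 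Its right-hand side is analytic, nonvanishing and even in $\eta$.
\item Uniqueness then makes $\eta$ analytic and odd about $b$. Hence $\hat{B}-\eta^{2k_1}$ is an analytic extension of $\widetilde\varphi$ that is even about $b$.
\end{itemize}
This contradicts admissibility: options (1)--(4) all fail, and (5) would need $b=+\infty$, which your Case (ii) already rules out. This is the step the paper's proof actually carries out. It is where the difficulty lies, rather than the interior critical point you flag as the main obstacle, which plain ODE uniqueness handles as in your Case (i).
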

\begin{proof}
Let $[\check{A},\hat{B}]$ be the range of $\psi$ in $\Omega$. We first show that $\hat{B}\leq \widetilde \varphi(b)$. Assume, for contradiction, that $\hat{B}>\widetilde \varphi(b)$, then $\widetilde{\varphi}(b)\in(\check{A},\hat{B})$.

Let $F_{\psi}$ denote the nonlinearity in the semilinear elliptic equation satisfied by $\psi$ in Proposition~\ref{prop. semilinear equation}, as $F_{\psi}$ and  $\widetilde \varphi''\circ \varphi^{-1}$ are Puiseux functions that agree on $[\varphi(-1),\varphi(1)]$. It follows from the unique-continuation property for Puiseux functions that $\widetilde \varphi$ satisfies
\begin{equation}\label{eq. widetilde f ODE}
    \widetilde \varphi''=F_{\psi}(\widetilde \varphi),\quad\mbox{whenever }\widetilde \varphi(t)\in[\check{A},\hat{B}].
\end{equation}
Let $G_{\psi}$ be an anti-derivative of $F_{\psi}$. It follows from Proposition~\ref{prop. semilinear equation} that $F_{\psi}$ is analytic near $\widetilde{\varphi}(b)$. Consequently, the quantity
\begin{equation}\label{eq. constant quantity from 1st integral}
    (\widetilde \varphi')^2(t)-2G_{\psi}(\widetilde \varphi(t))
\end{equation}
is constant for all $t\in [a,b]$, and $\widetilde \varphi$ is $C^1$ near $b$. Therefore, in view of \eqref{eq. widetilde f ODE}, we may analytically continue $\widetilde \varphi$ beyond $b$ and the  admissible condition implies that $b$ is a critical point for $\widetilde{\varphi}$. By the local well-posedness theory for ODEs, this continuation is necessarily even with respect to $b$, contradicting the admissible asymptotic shear condition.

Note that \eqref{eq. widetilde f ODE} implies that $\widetilde \varphi$ is $C^2$ whenever $\check{A}\leq \widetilde \varphi \leq\hat{B}$. In particular, if $\widetilde \varphi$ is not $C^2$, then necessarily
\[
b>\widetilde \varphi^{-1}(\hat{B}),\quad\mbox{and hence }\widetilde \varphi'\bigl(\widetilde \varphi^{-1}(\hat{B})\bigr)>0.
\]

It remains to treat the case where $\hat{B}=\widetilde \varphi(b)$ and $\widetilde \varphi$ is $C^2$ near $b$. We claim that in this case $\widetilde \varphi'(b)>0$. Assume, on the contrary, $\widetilde \varphi'(b)=0$. As above, the quantity \eqref{eq. constant quantity from 1st integral} is constant for all $t$ near $b$. By Proposition~\ref{prop. semilinear equation}, there exists a $k_0\geq 2$ such that
\begin{equation*}
(\widetilde \varphi')^2=\sum_{k=2k_0-1}^{\infty} a_k \bigl(\hat{B}-\widetilde \varphi\bigr)^{\frac{k}{k_0}}\quad\mbox{near }\hat{B},\quad\mbox{where }a_{2k_{0}-1}\neq0.
\end{equation*}
Define $\displaystyle\eta := (\hat{B}-\widetilde \varphi)^{\frac{1}{2k_0}}$. Then, for $|y-b|\ll 1$, one has
\begin{equation}\label{eqn:ODE}
\left\{\begin{aligned}
    &\eta'
    =-\frac{1}{2k_0}\sqrt{\sum_{k=2k_0-1}^{\infty} a_k \eta^{2(k-2k_0+1)}},\\
    &\eta(b)=0,
\end{aligned}\right.
\end{equation}
where the right-hand side is analytic and symmetric with respect to $0$. Hence $\eta$ admits an analytic extension as a solution to \eqref{eqn:ODE}. By the local well-posedness theory for ODEs, $\eta$ is odd with respect to $b$. Consequently, $\hat{B}-\eta^{2k_0}$ gives an analytic extension of $\widetilde \varphi$ beyond $b$, and this extension is even with respect to $b$. This again contradicts the assumption that $\varphi$ is an admissible asymptotic shear flow.
The case at the left endpoint $\check{A}$ can be treated similarly. This completes the proof of Lemma \ref{thm. the rigidity theorem with symmetry breaking condition}.
\end{proof}
An immediate corollary of Lemmas \ref{lem: proof via strong maximum principle} and  \ref{thm. the rigidity theorem with symmetry breaking condition} is the following corollary. 
\begin{corollary}\label{cor:symmetry breaking implies rigidity}
    If the asymptotic shear flow $\varphi$ satisfies the admissible condition in Definition~\ref{def. symmetry breaking}, then every analytic steady state is necessarily a shear flow, i.e., $\mathcal{A}(\varphi)=\{\varphi\}$. 
\end{corollary}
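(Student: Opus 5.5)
The plan is to combine Lemma~\ref{thm. the rigidity theorem with symmetry breaking condition} with Lemma~\ref{lem: proof via strong maximum principle}. Take any $\psi\in\mathcal{A}(\varphi)$ and write its range as $[\check{A},\hat{B}]$. Proposition~\ref{prop. semilinear equation} gives $\Delta\psi=F_\psi(\psi)$ in $\Omega$. Here $F_\psi$ is continuous (Puiseux at the extreme values) and $\varphi''=F_\psi(\varphi)$ on $[-1,1]$. Since $\varphi$ is admissible, Lemma~\ref{thm. the rigidity theorem with symmetry breaking condition} yields two facts: $[\check{A},\hat{B}]$ lies in the range of $\widetilde{\varphi}$, and $\widetilde{\varphi}'>0$ at every point where $\widetilde{\varphi}$ takes values in $[\check{A},\hat{B}]$.

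Next I will choose the interval $[a',b']$ needed in Lemma~\ref{lem: proof via strong maximum principle}. Set $a'=\widetilde{\varphi}^{-1}(\min\{\check{A},\varphi(-1)\})$ and $b'=\widetilde{\varphi}^{-1}(\max\{\hat{B},\varphi(1)\})$. Because $\psi$ takes the boundary values $\varphi(\pm1)$, in fact $\check{A}\le\varphi(-1)$ and $\hat{B}\ge\varphi(1)$. So $a'\le-1\le 1\le b'$ and $\widetilde{\varphi}([a',b'])=[\check{A},\hat{B}]$.

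On $[a',b']$ I will check the remaining hypotheses. First, $\widetilde{\varphi}'>0$ holds on all of $[a',b']$, including the endpoints, by the second conclusion of Lemma~\ref{thm. the rigidity theorem with symmetry breaking condition}. Second, $\widetilde{\varphi}''=F_\psi(\widetilde{\varphi})$ holds there, as in \eqref{eq. widetilde f ODE}. This follows by unique continuation of the Puiseux/analytic relation from $[-1,1]$ to the region where $\widetilde{\varphi}\in[\check{A},\hat{B}]$. The same relation shows $\widetilde{\varphi}\in C^2([a',b'])$, which covers the case where the endpoint of the maximal interval is attained and $\widetilde{\varphi}$ must be understood as extended up to it.

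With these in place, $\psi$ solves \eqref{eq:semilinear-f} with $F=F_\psi$ and range inside $[\widetilde{\varphi}(a'),\widetilde{\varphi}(b')]$. Lemma~\ref{lem: proof via strong maximum principle} then gives $\psi\equiv\widetilde{\varphi}(y)=\varphi(y)$.

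The main obstacle is the endpoint case $\hat{B}=\widetilde{\varphi}(b)$ with $b$ finite (and likewise at $a$). Here $[a',b']$ touches the boundary of the maximal interval, so I must make sure $\widetilde{\varphi}$ is $C^2$ up to $b$ with $\widetilde{\varphi}'(b)>0$. Lemma~\ref{thm. the rigidity theorem with symmetry breaking condition} excludes $\widetilde{\varphi}'(b)=0$. The $C^2$ regularity comes from the first integral \eqref{eq. constant quantity from 1st integral}, which extends $\widetilde{\varphi}$ continuously with $\widetilde{\varphi}''=F_\psi(\widetilde{\varphi})$ up to $b$, with $F_\psi$ continuous on the closed range. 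This is exactly what Lemma~\ref{lem: proof via strong maximum principle} needs to define $\tilde h$ as a $C^2$ function.
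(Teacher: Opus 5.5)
Your proposal is correct and matches the paper's argument, which obtains the corollary immediately by combining the two lemmas. The conclusions of Lemma~\ref{thm. the rigidity theorem with symmetry breaking condition} (the range of $\psi$ lies in the range of $\widetilde{\varphi}$, with $\widetilde{\varphi}'>0$ there) are fed into the maximum-principle Lemma~\ref{lem: proof via strong maximum principle} on $[\widetilde{\varphi}^{-1}(\check{A}),\widetilde{\varphi}^{-1}(\hat{B})]$; your explicit choice of $[a',b']$ and your one-sided $C^2$ extension via the first integral when $\hat{B}=\widetilde{\varphi}(b)$ make precise what the paper leaves implicit.
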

We now complete the proof of Proposition~\ref{prop:dense-rigidity}.
\begin{proof}[Proof of Proposition~\ref{prop:dense-rigidity}]
If $\varphi$ is of the form $\varphi(t)=a_0+a_1t+a_2t^2$, then for every $\epsilon$, the function $\varphi_\epsilon(t):=\varphi(t)+\epsilon t$ satisfies $\varphi_\epsilon''(t)=2a_2$. Hence the corresponding nonlinearity is simply $F\equiv 2a_2$. It is straightforward to check that the only solution to the problem
\begin{equation*}
    \left\{\begin{aligned}
        &\Delta\psi=2a_{2}&\mbox{in }&\mathbb{R}\times(-1,1),\\
        &\psi(x,\cdot)\to \varphi_{\epsilon}(\cdot)&\mbox{as }&|x|\to+\infty,\\
        &\psi(x,\pm1)\equiv \varphi_{\epsilon}(\pm1)
    \end{aligned}\right.
\end{equation*}
is $\psi(x,y)\equiv \varphi_\epsilon(y)$.

Thus, in order to prove the proposition, it suffices to show that whenever $\varphi$ is non-admissible and is not of the form $a_0+a_1t+a_2t^2$, then for every sufficiently small $\epsilon>0$, $\varphi_\epsilon(t):=\varphi(t)+\epsilon t$ is admissible.

Let $(a,b)$ be the maximal interval of $\varphi$, and let $(a_\epsilon,b_\epsilon)$ be the maximal interval of $\varphi_\epsilon$. We divide the argument into the following cases.

    \textbf{Case 1: $a=-\infty$ and $b=\infty$.} Since $\varphi$ is non-admissible, its analytic extension $\widetilde{\varphi}$ must satisfy at least one of the following properties:
    \[
    \lim_{t\to-\infty}\widetilde \varphi(t)>-\infty
    \quad\text{or }\lim_{t\to\infty}\widetilde \varphi(t)<\infty.
    \]
    On the other hand, by Lemma \ref{rmk. admissible is open}, for $\widetilde \varphi_\epsilon(t)=\widetilde \varphi(t)+\epsilon t$ with $\epsilon>0$, we still have
    \[
    a_\epsilon=-\infty,
    \quad
    b_\epsilon=\infty,\quad\mbox{and }\lim_{t\to\pm\infty}\widetilde \varphi_\epsilon(t)=\pm\infty.
    \]
    Hence $\varphi_\epsilon$ is admissible.

    \textbf{Case 2: $a>-\infty$ and $b=\infty$.} Since $\widetilde \varphi$ is monotonically increasing, Lemma \ref{rmk. admissible is open} implies that, at the right endpoint, it holds that
    \[
    b_\epsilon=\infty\mbox{ and }\lim_{t\to\infty}\widetilde \varphi_\epsilon(t)=\infty,\quad\mbox{for }\epsilon>0,
    \]
    and that the admissible condition is always satisfied on the right endpoint for $\widetilde\varphi_{\ep}$. At the left endpoint $a$, as $\widetilde{\varphi}$ is not admissible, $\varphi$ is $C^2$ near $a$.  Note that $a$ has to be a critical point of $\varphi$. Otherwise, $\varphi$ would admit a monotone analytic extension beyond $a$, which contradicts $(a,b)$ being maximal interval. As $\widetilde{\varphi}$ is not admissible,  $\widetilde \varphi$ admits an analytic extension  $\widetilde \varphi^*$ that is even with respect to $a$. As $\varphi$ is not quadratic, there exists an even integer $k\geq 4$ such that
\begin{equation}\label{eqn:vanishing degree even}
    (\widetilde{\varphi^*})^{(1)}(a)=(\widetilde{\varphi^*})^{(j)}(a)=  0\mbox{ for } 3\leq j\leq k-1,\quad\mbox{and }(\widetilde{\varphi^a})^{(k)}(a)\neq 0.
\end{equation}

        Therefore, for every sufficiently small $\epsilon>0$, the function $\widetilde \varphi_\epsilon(t)=\widetilde \varphi(t)+\epsilon t$ admits an analytic extension beyond the left endpoint $a_\epsilon$, which is a critical point of $\widetilde \varphi_\epsilon$. Moreover, by \eqref{eqn:vanishing degree even}, one has $\displaystyle\lim_{\epsilon \rightarrow 0}|a_{\epsilon}-a|=0$, $a_{\epsilon}\neq a$, and thus
        \begin{equation*}
            (\widetilde{\varphi^a})^{(k-1)}(a_{\epsilon})\neq 0.
        \end{equation*}
        Therefore, this extension is not even with respect to $a_\epsilon$. In particular, one has $a_\epsilon>-\infty$.
    Hence $\varphi_\epsilon$ is admissible.
    
    \textbf{Case 3: $a=-\infty$ and $b<\infty$.} This is analogous to Case 2 and we omit the details.

    \textbf{Case 4: $a>-\infty$ and $b<\infty$.} Based on Lemma \ref{rmk. admissible is open}, as the admissible condition of the far field is stable under the perturbation $\epsilon y$, if the maximal interval of $\varphi_{\epsilon}$ becomes $(a_{\epsilon},b_{\epsilon})$, then one can analyze the endpoint $a_{\epsilon}$ as in Case 2, and $b_{\epsilon}$ as in Case 3. This shows that $\varphi_{\epsilon}$ is admissible for $\epsilon>0$ sufficiently small.
    
In all cases, we conclude that for every non-admissible $\varphi$ which is not quadratic, the perturbed profile $\varphi_\epsilon(y)=\varphi(y)+\epsilon y$ is admissible for all sufficiently small $\epsilon>0$. Hence by Corollary~\ref{cor:symmetry breaking implies rigidity}, the proof of Proposition~\ref{prop:dense-rigidity} is completed.
\end{proof}
\appendix
\section{Sobolev inequalities}\label{AppendixA}
In this appendix, we prove several Sobolev inequalities for two-dimensional functions, which play an important role in the proof of main results and may be useful for other problems.

\begin{lemma}[Sobolev inequality with measure]\label{lem. Sobolev with measure}
    If $u\in H^{1}(\mathbb{R}^{2})$ is compactly supported, then there exists a constant $C$ such that
    \begin{equation*}
        C\int_{\mathbb{R}^{2}}|\nabla u|^{2}dxdy\geq\frac{\int_{\mathbb{R}^{2}}|u|^{2}dxdy}{\mathcal{H}^{2}(\{(x,y)\in\mathbb{R}^{2}:u(x,y)\neq0\})}.
    \end{equation*}
    Here, $\mathcal{H}^{2}$ is the two-dimensional Hausdorff measure.
\end{lemma}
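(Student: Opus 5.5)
This is a Faber--Krahn-type inequality, and the plan is to reduce it to the first Dirichlet eigenvalue of a disc via Schwarz symmetrization. Let $E:=\{u\neq 0\}$, which is a bounded set because $u$ is compactly supported. If $\mathcal{H}^{2}(E)=0$ then $u=0$ a.e. and both sides vanish (with the convention that the right-hand side is $0$), so we may assume $0<|E|<\infty$.

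\textbf{Step 1: symmetrize.} Replace $u$ by $|u|$; since $|\nabla|u||=|\nabla u|$ a.e., neither side changes. Let $u^{*}$ be the Schwarz symmetric decreasing rearrangement of $|u|$. Then:
\begin{itemize}
\item $\|u^{*}\|_{L^{2}}=\|u\|_{L^{2}}$, because rearrangement is equimeasurable;
\item $\{u^{*}\neq 0\}$ is, up to a null set, the open disc $B_{R}$ with $\pi R^{2}=|E|$;
\item the P\'olya--Szeg\H{o} inequality gives $\int|\nabla u^{*}|^{2}\leq\int|\nabla u|^{2}$.
\end{itemize}
Consequently $u^{*}\in H^{1}_{0}(B_{R})$ in the appropriate sense.

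\textbf{Step 2: Poincar\'e on the disc.} On $B_{R}$, the Poincar\'e inequality with the first Dirichlet eigenvalue $\lambda_{1}(B_{R})=j_{0,1}^{2}/R^{2}$ gives
\[
\int|\nabla u^{*}|^{2}\geq\frac{j_{0,1}^{2}}{R^{2}}\int|u^{*}|^{2}=\frac{\pi j_{0,1}^{2}}{|E|}\int|u|^{2}.
\]
Combining this with Step 1 yields the claim with $C=1/(\pi j_{0,1}^{2})$. Any explicit lower bound for $\lambda_{1}$ of the unit disc suffices, since scaling supplies the $1/R^{2}$ factor.

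\textbf{Main obstacle.} The delicate point is justifying that $u^{*}$ vanishes on $\partial B_{R}$ in the $H^{1}_{0}$ sense when $E$ is an arbitrary measurable (not open) set. To handle it, apply the Poincar\'e bound to the truncations $(u^{*}-t)_{+}$ for $t>0$. The superlevel set $\{u^{*}>t\}$ is a disc of measure at most $|E|$, and $(u^{*}-t)_{+}$ is Lipschitz-compatible with zero boundary trace there. Then let $t\downarrow 0$ using monotone convergence on both sides.

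If one prefers to avoid rearrangement entirely, there is an alternative route. Use the Gagliardo--Nirenberg/Ladyzhenskaya inequality $\|u\|_{L^{4}}^{2}\leq C\|u\|_{L^{2}}\|\nabla u\|_{L^{2}}$ together with H\"older's inequality $\|u\|_{L^{2}}^{2}\leq|E|^{1/2}\|u\|_{L^{4}}^{2}$. This gives $\|u\|_{L^{2}}\leq C|E|^{1/2}\|\nabla u\|_{L^{2}}$, and squaring yields the stated inequality directly. I would in fact present this second argument as the primary one, since it is two lines and needs no symmetrization.
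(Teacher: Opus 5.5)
Your primary argument is the paper's proof. The paper derives the Ladyzhenskaya bound $\|u\|_{L^4}^2\le 2C\|u\|_{L^2}\|\nabla u\|_{L^2}$ by applying the $W^{1,1}$ Sobolev inequality $\|v\|_{L^2}\le C\|\nabla v\|_{L^1}$ to $v=u^2$, and then uses Cauchy--Schwarz against $\chi_{\{u\neq 0\}}$, just as you use H\"older.

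Your Faber--Krahn alternative via Schwarz symmetrization is also correct and gives the sharp constant $1/(\pi j_{0,1}^2)$, but it is heavier than needed here. The boundary issue you flag there is harmless: an $H^1(\mathbb{R}^2)$ function vanishing a.e.\ outside $\overline{B_R}$ already lies in $H^1_0(B_R)$.
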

\begin{proof}
    Without loss of generality, we assume that $u$ is smooth with compact support. We start with the standard $W^{1,1}$ Sobolev inequality (with $n=2$ and $p=1$)
    \begin{equation*}
        C\int_{\mathbb{R}^{2}}|\nabla v|dxdy\geq\Big(\int_{\mathbb{R}^{2}}|v|^{2}dxdy\Big)^{1/2}.
    \end{equation*}
    By setting $v=u^{2}$ and applying the Cauchy-Schwarz inequality, one has
    \begin{align*}
        \Big(\int_{\mathbb{R}^{2}}|u|^{4}dxdy\Big)^{1/2}\leq&C\int_{\mathbb{R}^{2}}|\nabla v|dxdy\leq2C\int_{\mathbb{R}^{2}}|u\nabla u|dxdy\\
        \leq&2C\Big(\int_{\mathbb{R}^{2}}|u|^{2}dxdy\Big)^{1/2}\Big(\int_{\mathbb{R}^{2}}|\nabla u|^{2}dxdy\Big)^{1/2}.
    \end{align*}
    This means that
    \begin{equation*}
        C\int_{\mathbb{R}^{2}}|\nabla u|^{2}dxdy\geq\frac{\int_{\mathbb{R}^{2}}|u|^{4}dxdy}{\int_{\mathbb{R}^{2}}|u|^{2}dxdy}\geq\frac{\int_{\mathbb{R}^{2}}|u|^{2}dxdy}{\int_{\mathbb{R}^{2}}\chi_{\{u\neq0\}}dxdy},
    \end{equation*}
    where we have used the Cauchy-Schwarz inequality again in the last step.
\end{proof}

Next, for functions in $H^{1}_{0}(\Omega)=H^{1}_{0}\Big(\mathbb{R}\times(-1,1)\Big)$, we have the following $L^{p}$ estimate.
\begin{lemma}[$L^{p}$-Sobolev inequalities]\label{lem. Sobolev in a channel}
    Let $u\in H^{1}_{0}(\Omega)$ and let $p\geq2$. Then there exists a uniform constant $\mathfrak{C}(p)$, such that
    \begin{equation}\label{eq. L p sobolev desired inequality}
        \|u\|_{L^{p}(\Omega)}\leq\mathfrak{C}(p)\Big(\int_{\Omega}|\nabla u|^{2}dxdy\Big)^{1/2}.
    \end{equation}
\end{lemma}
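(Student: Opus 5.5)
The plan is to reduce to $u\in C^{\infty}_{0}(\Omega)$ by density, extend $u$ by zero to $\mathbb{R}^{2}$, and combine two ingredients: a Poincaré inequality in the vertical direction, and the two-dimensional Ladyzhenskaya/Gagliardo--Nirenberg inequality for the extension.

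\textbf{Step 1 (the case $p=2$).} For each fixed $x$, $u(x,\cdot)$ vanishes at $y=\pm1$. Writing $u(x,y)=\int_{-1}^{y}\partial_{y}u(x,s)\,ds$ and using Cauchy--Schwarz gives
\[
|u(x,y)|^{2}\leq 2\int_{-1}^{1}|\partial_{y}u(x,s)|^{2}\,ds .
\]
Integrating in $y$ and then in $x$ yields $\|u\|_{L^{2}(\Omega)}^{2}\leq 4\|\nabla u\|_{L^{2}(\Omega)}^{2}$. Consequently $\|u\|_{H^{1}}$ and $\|\nabla u\|_{L^{2}}$ are equivalent on $H^{1}_{0}(\Omega)$.

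\textbf{Step 2 (the case $p=4$).} Apply the computation in the proof of Lemma~\ref{lem. Sobolev with measure} to the zero extension of $u$. That argument never uses the compact support in an essential way; it only needs the $W^{1,1}$ Sobolev inequality for $v=u^{2}$, which is compactly supported. This gives
\[
\|u\|_{L^{4}}^{2}\leq 2C\,\|u\|_{L^{2}}\|\nabla u\|_{L^{2}}\leq 4C\,\|\nabla u\|_{L^{2}}^{2},
\]
where the second inequality uses Step 1.

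\textbf{Step 3 (general $p\geq2$).} Apply the same trick to $v=|u|^{p/2}$, or equivalently to $|u|^{q}$. From
\[
\|u\|_{L^{2q}}^{q}\leq C\,q\int|u|^{q-1}|\nabla u|\leq Cq\,\|u\|_{L^{2(q-1)}}^{q-1}\|\nabla u\|_{L^{2}},
\]
induction on $q=2,3,\dots$ gives $\|u\|_{L^{2q}}\leq C(q)\|\nabla u\|_{L^{2}}$ for every integer $q\geq1$. Intermediate exponents then follow by Hölder interpolation between $L^{2}$ and $L^{2q}$. Finally, density of $C^{\infty}_{0}(\Omega)$ in $H^{1}_{0}(\Omega)$ together with Fatou's lemma extends the inequality to all of $H^{1}_{0}(\Omega)$.

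I do not expect a serious obstacle here. The only point needing care is that the unbounded horizontal direction prevents using Rellich-type or bounded-domain Sobolev embeddings directly. This is exactly why the constant must come from the vertical Poincaré inequality of Step 1, which is uniform in $x$, combined with scale-invariant two-dimensional inequalities on $\mathbb{R}^{2}$. In the induction I will keep track only of the fact that the constant depends on $p$ alone.
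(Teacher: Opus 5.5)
Your argument is correct, but for $p>2$ it takes a different route from the paper; for $p=2$ both use the same vertical Poincar\'e inequality.

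The paper works with the distribution function $V(h)=\mathcal{H}^{2}(\{|u|>h\})$. Slicing along vertical lines gives $2h^{2}\mathcal{H}^{1}(\Pi_{h})\le E$, hence the weak-type bound $h^{2}V(h)\le E$, which handles the levels $h\le 2\sqrt{E}$ in the layer-cake formula. For the tail, the paper uses the coarea formula and the isoperimetric inequality to obtain the Gaussian decay $V(H)\le e^{-\pi H^{2}/E}$ for $H\ge 2\sqrt{E}$.

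You instead use the vertical Poincar\'e inequality only once, to control $\|u\|_{L^{2}}$. You then iterate the scale-invariant estimate $\||u|^{q}\|_{L^{2}(\mathbb{R}^{2})}\le C\|\nabla |u|^{q}\|_{L^{1}(\mathbb{R}^{2})}$ and interpolate.

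Your route is more elementary. It needs nothing beyond the $W^{1,1}$ Sobolev inequality already used in Lemma~\ref{lem. Sobolev with measure}. It also avoids the coarea/isoperimetric step and the care about level sets and $V'(h)$ that step implicitly requires.

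The paper's route gives more in two respects. First, the Gaussian tail is a Trudinger--Moser-type exponential integrability and yields $\|u\|_{L^{p}}\lesssim\sqrt{p}\,\|\nabla u\|_{L^{2}}$. Your iteration, as set up, gives constants growing linearly in $p$, since $(q!)^{1/q}\sim q/e$. Second, the slicing bound $2h^{2}\mathcal{H}^{1}(\Pi_{h})\le E$ is reused in Corollary~\ref{lem. L infinity uniform decay}: for $u\in\overline{\mathcal{C}}$ the set $\Pi_{h}$ is a symmetric interval, which gives the uniform $L^{\infty}$ decay in $|x|$. With your approach, that corollary would need its own short slicing argument.

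For the exponents the paper actually uses ($p=2,3,4$), either proof suffices.
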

\begin{proof}
    The case $p=2$ is trivial, as we have
    \begin{equation*}
        \int_{-1}^{1}|u(x^{*},\eta)|^{2}d\eta\leq C_{2}\int_{-1}^{1}|\partial_{y}u(x^{*},\eta)|^{2}d\eta
    \end{equation*}
    for each $x^{*}\in\mathbb{R}$. The rest of the proof is devoted to the case $p>2$.
    
    Define $V(h):=\mathcal{H}^{2}(\Omega\cap\{|u|>h\})$. Then one has
    \begin{equation*}
        \int_{0}^{\infty}ph^{p-1}V_{h}dh=\int_{\Omega}|u|^{p}dxdy.
    \end{equation*}
    Denote $\Pi_{h}$ by
    \begin{equation*}
        \Pi_{h}:=\Big\{x\in\mathbb{R}:(\{x\}\times[-1,1])\cap\{|u|>h\}\neq\emptyset\Big\}.
    \end{equation*}
    It is easy to see that $\mathcal{H}^{1}(\Pi_{h})\geq\frac{1}{2}V(h)$. For each $x^{*}\in\Pi_{h}$, suppose that $|u(x^{*},y^{*})|>h$ for some $y^{*}\in(-1,1)$. Hence, on the vertical line $\{x^{*}\}\times[-1,1]$,
    \begin{align*}
        \int_{-1}^{1}|\nabla u(x^{*},\eta)|^{2}d\eta\geq&\int_{-1}^{y^{*}}|\partial_{y}u(x^{*},\eta)|^{2}d\eta+\int_{y^{*}}^{1}|\partial_{y}u(x^{*},\eta)|^{2}d\eta\\
        \geq&\frac{1}{y^{*}+1}\Big|\int_{-1}^{y^{*}}\partial_{y}u(x^{*},\eta)d\eta\Big|^{2}+\frac{1}{1-y^{*}}\Big|\int_{y^{*}}^{1}\partial_{y}u(x^{*},\eta)d\eta\Big|^{2}\\
        \geq&\frac{h^{2}}{y^{*}+1}+\frac{h^{2}}{1-y^{*}}\geq2h^{2}.
    \end{align*}
    Therefore, we have
    \begin{equation}\label{eq:estgradient_A}
        \int_{\Omega}|\nabla u|^{2}dxdy\geq\int_{\Pi_{h}\times[-1,1]}|\nabla u(x,y)|^{2}dxdy\geq2h^{2}\mathcal{H}^{1}(\Pi_{h})\geq h^{2}V(h).
    \end{equation}
    Denote $\displaystyle E=\int_{\Omega}|\nabla u|^{2}dxdy$. It follows from \eqref{eq:estgradient_A} that one has
    \begin{equation*}
        \int_{0}^{2\sqrt{E}}ph^{p-1}V(h)dh\leq\int_{0}^{2\sqrt{E}}ph^{p-3}Edh=\frac{2^{p-2}}{p-2}E^{p/2}.
    \end{equation*}

    On the other hand, by the coarea formula and the isoperimetric inequality, it holds that
    \begin{align*}
        E\geq&\int_{\{|u|>\sqrt{E}\}}|\nabla u|^{2}dxdy=\int_{\sqrt{E}}^{\infty}\int_{\partial\{|u|=h\}}|\nabla u|d\mathcal{H}^{1}(\partial\{|u|=h\})dh\\
        \geq&\int_{\sqrt{E}}^{\infty}\frac{\Big[\int_{\partial\{|u|=h\}}d\mathcal{H}^{1}(\partial\{|u|=h\})\Big]^{2}}{\int_{\partial\{|u|=h\}}|\nabla u|^{-1}d\mathcal{H}^{1}(\partial\{|u|=h\})}dh\\
        =&\int_{\sqrt{E}}^{\infty}\frac{\Big[\mathcal{H}^{1}(\partial\{|u|=h\})\Big]^{2}}{-V'(h)\Big|_{t=h}}dh\geq4\pi\int_{\sqrt{E}}^{\infty}\frac{V(h)}{-V'(h)}dh.
    \end{align*}
   As $V(h)\geq 0$ and $-V'(h)\geq 0$,
    \begin{equation}\label{eqn:entropy}
    \begin{aligned}
         \frac{E}{4\pi}\ln{\left(\frac{1}{V(H)}\right)}\geq &\, \frac{E}{4\pi}\int_{\sqrt{E}}^{H}\frac{d}{dh}\ln{\left(\frac{1}{V(h)}\right)}dh\\
          \geq &\,\Big\{\int_{\sqrt{E}}^{H}\frac{V(h)}{-V'(h)}dh\Big\}\Big\{\int_{\sqrt{E}}^{H}\frac{-V'(h)}{V(h)}dh\Big\}.
    \end{aligned}
    \end{equation}
    Hence for $H\geq2\sqrt{E}$, one has
    \begin{equation*}
        \frac{E}{4\pi}\ln{\Big(\frac{1}{V(H)}\Big)}\geq\Big(\int_{\sqrt{E}}^{H} 1 dh\Big)^2\geq \frac{H^2}{4}.
    \end{equation*}
    In other words, one has $V(H)\leq e^{-\frac{\pi}{E}H^{2}}$. Thus it holds that
    \begin{equation*}
        \int_{2\sqrt{E}}^{\infty}ph^{p-1}V(h)dh\leq\int_{2\sqrt{E}}^{\infty}ph^{p-1}e^{-\frac{\pi}{E}h^{2}}dh=E^{p/2}\int_{2}^{\infty}p t^{p-1}e^{-\pi t^{2}}dt.
    \end{equation*}
    Therefore, choosing
    \[
    \mathfrak{C}(p)=\frac{2^{p-2}}{p-2}+\int_{2}^{\infty}p t^{p-1}e^{-\pi t^{2}}dt
    \]  yields the desired inequality \eqref{eq. L p sobolev desired inequality}.
\end{proof}

Applying a similar method gives the following $L^{\infty}$ decay estimate for functions in $\overline{\mathcal{C}}$.
\begin{corollary}
[Uniform $L^{\infty}$ decay]\label{lem. L infinity uniform decay}
    For $u(x,y)\in\overline{\mathcal{C}}$, which is defined right after \eqref{defmathcalC}, one has
    \begin{equation*}
        \sup_{|x|\geq L,\ y\in[-1,1]}u(x,y)\leq\frac{\|u\|_{H^{1}(\Omega)}}{\sqrt{4L}}.
    \end{equation*}
\end{corollary}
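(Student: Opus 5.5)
Since $u$ is only guaranteed to be a limit in $H^1$ of functions in $\mathcal{C}$, I will first prove the estimate for $u\in\mathcal{C}$ and then pass to the limit. Take $u\in\mathcal{C}$, so $u$ is smooth, compactly supported, nonnegative, even in $x$ and nonincreasing in $|x|$. Fix $x_0$ with $|x_0|\ge L$ and $y_0\in(-1,1)$. By symmetry I may assume $x_0\ge L$. Monotonicity in $|x|$ then gives $u(x,y_0)\ge u(x_0,y_0)=:h$ for every $x$ with $|x|\le x_0$. So the horizontal segment $[-x_0,x_0]\times\{y_0\}$, of length $2x_0\ge 2L$, lies in $\{u\ge h\}$.

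Next I bound $h$ on each vertical line, exactly as in the proof of Lemma~\ref{lem. Sobolev in a channel}. For $|x|\le x_0$, the function $u(x,\cdot)$ vanishes at $y=\pm1$ and reaches at least $h$ at $y_0$. The Cauchy--Schwarz estimate there gives
\[
\int_{-1}^{1}|\partial_y u(x,\eta)|^2\,d\eta\ \ge\ \frac{h^2}{1+y_0}+\frac{h^2}{1-y_0}\ \ge\ 2h^2 .
\]
Integrating over $x\in[-x_0,x_0]$ yields
\[
\|u\|_{H^1(\Omega)}^2\ \ge\ \int_\Omega|\nabla u|^2\ \ge\ 2h^2\cdot 2x_0\ \ge\ 4Lh^2 .
\]
Hence $h\le \|u\|_{H^1(\Omega)}/\sqrt{4L}$, which is the claimed bound for $u\in\mathcal{C}$.

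For general $u\in\overline{\mathcal{C}}$, take $u_k\in\mathcal{C}$ with $u_k\to u$ in $H^1_0(\Omega)$. Passing to a subsequence, $u_k\to u$ almost everywhere. So for a.e.\ $(x,y)$ with $|x|\ge L$,
\[
u(x,y)=\lim_k u_k(x,y)\le \limsup_k \frac{\|u_k\|_{H^1}}{\sqrt{4L}}=\frac{\|u\|_{H^1}}{\sqrt{4L}}.
\]
This gives the essential supremum bound, which is all that is used in Lemma~\ref{lem. Palais-Smale}, where $u_n\in\overline{\mathcal{C}}$.

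The only delicate point is this closure step: functions in $\overline{\mathcal{C}}$ need not be continuous, so the supremum has to be read as an essential supremum. Alternatively, one can check that a.e.\ limits keep evenness and monotonicity in $|x|$, and then rerun the line argument directly on $u$ along a.e.\ vertical line. The core inequality itself is routine once the monotone rearrangement structure is used to find the long segment where $u\ge h$.
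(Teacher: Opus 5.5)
Your proof is correct and is essentially the paper's argument. The paper notes that for $u\in\overline{\mathcal{C}}$ the set $\Pi_h$ from the proof of Lemma~\ref{lem. Sobolev in a channel} is an interval symmetric about $0$, so it has length at least $2L$ whenever $u>h$ somewhere in $\{|x|\ge L\}$, and then applies \eqref{eq:estgradient_A}; that is exactly your horizontal-segment plus vertical Cauchy--Schwarz computation. Your density step (prove the bound on $\mathcal{C}$, pass to a.e.\ limits, and read the supremum as an essential supremum) supplies care about the closure that the paper leaves implicit.
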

\begin{proof}
    It suffices to notice that each $\Pi_{h}$ defined in the proof of Lemma~\ref{lem. Sobolev in a channel} is connected and is symmetric about $\{x=0\}$ for $u\in\overline{\mathcal{C}}$.
\end{proof}

\section{Analysis of the heat equation}\label{AppendixB}
In this appendix, we provide a detailed analysis of the heat equation. The analysis plays a crucial role in the construction of the Palais-Smale sequence.
\begin{lemma}\label{lem. appendix well-posedness} 
  Assume that $g(v, y)\in C^{\infty}(\mathbb{R}\times [-1,1])$, $g(0,y)\equiv0$, and the partial derivative with respect to $v$ satisfies
    \begin{equation*}
        |g'(v,y)|\leq C(1+|v|)\quad\mbox{for all }(v,y)\in\mathbb{R}\times[-1,1].
    \end{equation*}   Consider the initial-boundary value problem for the following parabolic equation
    \begin{equation}\label{eq. Heat Cauchy problem}
        \left\{\begin{aligned}
            &\partial_{t}v=\Delta v+g(v,y),\mbox{ when }(t,x,y)\in(0,T)\times\Omega,\\
            &v(0,x,y)= u(x,y),\\
            &v(t,x,\pm1)=0\mbox{ and }\lim_{x\to\pm\infty}v(t,x,y)=0.
        \end{aligned}\right..
    \end{equation}
For $\Omega=\mathbb{R}\times (-1,1)$,  let $u\in C^{2,\alpha}_{loc}(\Omega)$ be compactly supported and satisfy
    \begin{equation}\label{eq. compatible assump}
        u=\Delta u=0\quad\mbox{on }\partial\Omega=\mathbb{R}\times\{\pm1\}.
    \end{equation}
    Then the following statements hold.
    \begin{itemize}
        \item[(1)] \textbf{(Existence):} There exists a time $T>0$ depending on the initial data $u(x,y)$, such that the problem \eqref{eq. Heat Cauchy problem} has a classical solution $v$ with $v(t,\cdot) \in H^{2}(\Omega)$ and $\partial_{t}v(t,\cdot) \in L^{2}(\Omega)$ for all $t\in[0,T]$.
        \item[(2)] \textbf{(Uniqueness and stability):} Let $\widetilde{u}\in C^{2,\alpha}_{loc}(\Omega)$ be another compactly supported function satisfying \eqref{eq. compatible assump}, and let $\widetilde{v}$ be a classical solution of \eqref{eq. Heat Cauchy problem} with the initial data $\widetilde{u}$. Assume that $\|v\|_{L^{\infty}([0,T]\times\Omega)}\leq M$ for some $M>0$. For any $\epsilon>0$, there exists a $\delta=\delta(\epsilon,M,T)>0$ such that if
        \begin{equation*}
            \|u-\widetilde{u}\|_{L^{\infty}(\Omega)}+\|u-\widetilde{u}\|_{L^{2}(\Omega)}+\|u-\widetilde{u}\|_{H^{1}(\Omega)}\leq\delta,
        \end{equation*}
        then $\|v-\widetilde{v}\|_{H^{1}(\{t\}\times\Omega)}+\|v-\widetilde{v}\|_{L^{\infty}(\{t\}\times\Omega)}\leq\epsilon$ for all $t\in[0,T]$.
        In particular, the classical solution to \eqref{eq. Heat Cauchy problem} with the same initial data $u$ is unique.
        \item[(3)] \textbf{(Symmetry):} If the initial data $u(x,y)\in\mathcal{C}$, which is defined in \eqref{defmathcalC}, then $v(t,\cdot)\in\overline{\mathcal{C}}$ for all $t>0$.
    \end{itemize}
\end{lemma}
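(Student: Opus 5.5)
The plan is a semigroup/energy argument in $L^{2}$-based spaces for existence and stability, and a comparison/rearrangement argument for the symmetry part.

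\textbf{Existence (1).} I would write the problem in mild form, $v(t)=e^{t\Delta_D}u+\int_0^t e^{(t-\tau)\Delta_D}g(v(\tau),y)\,d\tau$, where $e^{t\Delta_D}$ is the Dirichlet heat semigroup on the channel. Since $g(0,y)=0$ and $|g'|\le C(1+|v|)$, we have $|g(v,y)|\le C(|v|+|v|^2)$ and $|g(v)-g(w)|\le C(1+|v|+|w|)|v-w|$. The map is therefore a contraction on a ball of
\[
X_T=C([0,T];H^1_0(\Omega)\cap L^\infty(\Omega))
\]
for small $T$, which depends on $\|u\|_{H^1}+\|u\|_{L^\infty}$. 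Here I use the $L^\infty$ and $L^2\to L^2$ contractivity of $e^{t\Delta_D}$ and its smoothing $\|\nabla e^{t\Delta_D}f\|_{L^2}\lesssim t^{-1/2}\|f\|_{L^2}$.

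To upgrade to $H^2$ and $\partial_t v\in L^2$, I would differentiate the equation in time. The compatibility condition \eqref{eq. compatible assump} gives $\partial_t v(0)=\Delta u+g(u,y)\in L^2$, and it vanishes on $\partial\Omega$. An energy estimate for $w=\partial_t v$, which solves $w_t=\Delta w+g'(v,y)w$ with $g'(v,y)$ bounded since $v\in L^\infty$, gives $w\in L^\infty_tL^2\cap L^2_tH^1_0$. Elliptic $H^2$ regularity on the flat strip, applied to $\Delta v=w-g(v,y)$, then yields $v(t)\in H^2$. Interior and boundary Schauder theory makes $v$ classical. The decay $v\to0$ as $|x|\to\infty$ follows from $v(t)\in H^2(\Omega)$.

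\textbf{Uniqueness and stability (2).} Let $z=v-\widetilde v$, so $z_t=\Delta z+c(t,x,y)z$ with $c=\int_0^1 g'(\theta v+(1-\theta)\widetilde v,y)\,d\theta$. By the short-time well-posedness above together with continuity, $\widetilde v$ stays bounded by $M+1$ on $[0,T]$ while $z$ is small; a continuity/bootstrap argument justifies this. Then $|c|\le C(M)$, and:
\begin{itemize}
\item multiplying by $z$ and by $-\Delta z$ gives Gronwall bounds $\|z(t)\|_{H^1}\le e^{C(M)T}\|z(0)\|_{H^1}$;
\item the maximum principle applied to $e^{-C(M)t}z$ gives $\|z(t)\|_{L^\infty}\le e^{C(M)T}\|z(0)\|_{L^\infty}$.
\end{itemize}
Choosing $\delta=\epsilon e^{-C(M)T}/2$ and closing the bootstrap gives the estimate. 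Uniqueness is the case $\delta=0$.

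\textbf{Symmetry (3).} I would check the three defining properties of $\mathcal{C}$ in turn.
\begin{itemize}
\item Evenness in $x$ follows from uniqueness, since $v(t,-x,y)$ is also a solution with the same data.
\item Nonnegativity follows from the maximum principle for the linear equation $z_t=\Delta z+c z$ with $z=v$, $c=\int_0^1g'(\theta v,y)\,d\theta$, using $g(0,y)=0$.
\item For monotonicity, set $w=\partial_x v$. It solves $w_t=\Delta w+g'(v,y)w$ (no $y$-derivative of $g$ appears, since we differentiate in $x$). On the half-strip $\{x>0\}$ we have $w=0$ on $\{x=0\}$ by evenness, $w=0$ on $y=\pm1$, $w(0)\le0$, and $w$ decays. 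The maximum principle on the half-strip then gives $w\le0$.
\end{itemize}
Finally, $v(t)$ belongs to $\overline{\mathcal{C}}$ rather than $\mathcal{C}$ itself, because it is not compactly supported. It can be approximated in $H^1$ by cutting off with even, radially decreasing cutoffs $\chi(x/R)$ and mollifying in $x$ only with even kernels, which preserves monotonicity and sign. Near $y=\pm1$ one first replaces $v$ by $(v-\eta)_+$ with a smooth positive part to obtain compact support in $y$; this preserves sign, evenness and monotonicity in $x$.

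The \emph{main obstacle} is justifying the maximum principles on the unbounded strip, including the time-derivative and $\partial_x v$ regularity they require. I would handle this by testing against $(e^{-Kt}w)_+$ in $L^2$, an energy-style maximum principle that needs only $w\in L^2_tH^1$, which the $H^2$ and $\partial_tv$ bounds provide.
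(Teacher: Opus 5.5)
Your argument is correct, but it takes a different route from the paper.

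\textbf{The paper's route.} The paper never solves on the strip directly. It solves on convex $C^{2,\alpha}$ exhaustion domains $\mathcal{S}_R$ and proves $R$-independent bounds: an $L^\infty$ bound by ODE comparison, energy bounds for $v_R$ and $\partial_t v_R$, and an $H^2$ bound from the Bochner identity, where convexity of $\partial\mathcal{S}_R$ gives the boundary term a sign. It then passes to a weak limit. The symmetry comes from a reflection comparison $v_R(t,2\lambda-x,y)\ge v_R(t,x,y)$ for $x<\lambda\le 0$ on the bounded domains, before letting $R\to\infty$. Stability is essentially the same maximum-principle/energy argument you give.

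\textbf{Your route.} You use a Duhamel fixed point on the whole strip and flat-boundary $H^2$ regularity, so no curvature term appears. You get evenness from uniqueness and monotonicity from the linearized equation for $\partial_x v$ on the half-strip.

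\textbf{What each buys.} The exhaustion keeps every maximum principle on bounded domains and needs no information on $\partial_x v$. However, it relies on $u$ being compactly supported, which is used for the compatibility condition on $\partial\mathcal{S}_R$. Your route must supply maximum principles on an unbounded domain. Your truncation against $(e^{-Kt}w)_+$ does this, justified by $\partial_t v\in L^2_tH^1_0$ and $v\in L^\infty_tH^2$. In exchange, your route works for any data in $H^2\cap H^1_0\cap L^\infty$ satisfying the compatibility condition. That is what the paper actually needs when it restarts the flow from the non-compactly-supported data $e^{\overline{t}\mathcal{L}}\gamma(s)$ in the proof of Lemma~\ref{lem. no blow up in the algorithm}.

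Your explicit bootstrap keeping $\widetilde v$ bounded in part (2) fills in a step the paper leaves implicit: its Lipschitz constant $C(M+1)$ already presupposes a bound on $\widetilde v$. Likewise, the paper simply asserts membership in $\overline{\mathcal{C}}$, so your approximation argument supplies a step it omits.

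\textbf{One small fix.} A mollifier that is merely even need not preserve monotonicity in $|x|$; it must also be nonincreasing in $|x|$. You can drop mollification altogether, because $v(t)$ is already smooth in $\Omega$ for $t>0$. Take $\Psi_\eta$ smooth and nondecreasing, vanishing on $(-\infty,\eta]$, with $0\le\Psi_\eta'\le 1$ and $|\Psi_\eta(s)-s|\le 2\eta$ for $s\ge 0$. Then $\chi(x/R)\,\Psi_\eta(v(t))$ already lies in $\mathcal{C}$ and converges to $v(t)$ in $H^1$ as $R\to\infty$ and $\eta\to 0$.
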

\begin{proof}
    The proof is divided into seven steps.

    \textbf{Step 1: Domain exhaustion and the approximation problem.} Assume for simplicity that $u(x,y)$ is supported in $[-L,L]\times[-1,1]$. For every $R>L$, we consider the following family of exhaustion domains $\mathcal{S}_{R}\subseteq\Omega$, such that
    \begin{equation*}
        [-R,R]\times[-1,1]\subseteq\mathcal{S}_{R}\subseteq\Omega.
    \end{equation*}
 Furthermore, we require that $\partial\mathcal{S}_{R}\in C^{2,\alpha}$ and that $\mathcal{S}_{R}$ is convex. Denote by ${\bnu}$ the outward unit normal of $\partial\mathcal{S}_{R}$. Then the curvature vector of $\partial\mathcal{S}_{R}$ takes the form
    \begin{equation*}
        \bH(x,y)=-h(x,y){\bnu}(x,y),\quad\mbox{where }h(x,y)\geq0\mbox{ for }(x,y)\in\partial\mathcal{S}_{R}.
    \end{equation*}
    
    We consider the following approximating problem of \eqref{eq. Heat Cauchy problem}:
    \begin{equation}\label{eq. Heat approximation problem}
        \left\{\begin{aligned}
            &\partial_{t}v_{R}=\Delta v_{R}+g(v_{R},y), &(x,y,t)\in \mathring{\mathcal{S}}_{R}\times(0,+\infty),\\
            &v_{R}(0,x,y)=u(x,y),  &(x,y)\in {\mathcal{S}}_{R},\\
            &v_{R}(t,x,y)=0,&(x,y)\in\partial\mathcal{S}_{R},
        \end{aligned}\right.
    \end{equation}
    where $\mathring{\mathcal{S}}_{R}$ is the interior of ${\mathcal{S}}_{R}$.
    Since $u$ satisfies \eqref{eq. compatible assump} and is supported in $\mathcal{S}_{R}$, the compatibility condition holds at $\partial\mathcal{S}_{R}\cap\{t=0\}$. By the standard well-posedness theory, there exists a $\overline{t}>0$, such that \eqref{eq. Heat approximation problem} has a classical solution $v_{R}(t,x,y)$ for $t\in(0,\overline{t})$. Next, we need to provide uniform estimates independent of the radius $R$.

    \textbf{Step 2: $L^{\infty}$ control.} By the assumptions on $g(u,y)$, one has
    \begin{equation}\label{eq:LinftyvR_A}
        \big|(\partial_{t}-\Delta)v_{R}\big|\leq C(|v_{R}|+|v_{R}|^{2}).
    \end{equation}
    Let $\varrho(t)$ solve  \begin{equation*}
      \left\{
      \begin{aligned}
          &\frac{d\varrho}{d t}=C(\varrho+\varrho^2),\\
          & \varrho(0)=\|u\|_{L^{\infty}(\Omega)},
      \end{aligned}
      \right.
    \end{equation*}
    where $C$ is the same as the one in \eqref{eq:LinftyvR_A}.
    Then there exists a $T>0$ depending on $\|u\|_{L^{\infty}(\Omega)}$, such that $|\varrho(t)|\leq2\|u\|_{L^{\infty}(\Omega)}$ for all  $t\in[0,T]$. Now by the maximum principle, it holds that 
    \begin{equation*}
        \|v_{R}(t,\cdot)\|_{L^{\infty}(\mathcal{S}_{R})}\leq\varrho(t)\leq2\|u\|_{L^{\infty}(\Omega)}.
    \end{equation*}
    Therefore, there exists a uniform constant $K=K(u)>0$, such that
    \begin{equation*}
        |g'(w,y)|\leq K\quad\mbox{for all }|w|\leq2\|u\|_{L^{\infty}(\Omega)}.
    \end{equation*}

    \textbf{Step 3: $L^{\infty}(0,T;L^{2}(\mathcal{S}_{R}))$ control of $v_{R}$ and $\partial_{t}v_{R}$.} Using the condition that $v_{R}\equiv0$ on $\partial\mathcal{S}_{R}$ and integrating by parts yields that for all $t\in[0,T]$, one has
    \begin{align*}
        \frac{d}{dt}\int_{\{t\}\times\mathcal{S}_{R}}\frac{v_{R}^{2}}{2}dxdy=&\int_{\{t\}\times\mathcal{S}_{R}}v_{R}\cdot\partial_{t}v_{R}dxdy=\int_{\{t\}\times\mathcal{S}_{R}}v_{R}\cdot(\Delta v_{R}+g)dxdy\\
        =&\int_{\{t\}\times\mathcal{S}_{R}}(v_{R}\cdot g-|\nabla v_{R}|^{2})dxdy\leq\int_{\{t\}\times\mathcal{S}_{R}}K\cdot v_{R}^{2}dxdy.
    \end{align*}
    In other words, for all $t\in[0,T]$,
    \begin{equation*}
        \int_{\{t\}\times\mathcal{S}_{R}}|v_{R}|^{2}dxdy\leq e^{2Kt}\int_{\mathcal{S}_{R}}u^{2}dxdy\leq e^{2KT}\int_{\mathbb{R}^{2}\times[-1,1]}u^{2}dxdy=C(K,T,u)<\infty.
    \end{equation*}
    
    Differentiating \eqref{eq. Heat approximation problem} in $t$ and multiplying by $\partial_{t}v_{R}$ gives
    \begin{align*}
        &\frac{d}{dt}\int_{\{t\}\times\mathcal{S}_{R}}\frac{|\partial_{t}v_{R}|^{2}}{2}dxdy=\int_{\{t\}\times\mathcal{S}_{R}}\partial_{t}v_{R}\Big(\Delta\partial_{t}v_{R}+\frac{\partial g}{\partial v}\cdot\partial_{t}v_{R}\Big)dxdy\\
        \leq&\int_{\{t\}\times\mathcal{S}_{R}}\Big(-|\nabla\partial_{t}v_{R}|^{2}+K\cdot|\partial_{t}v_{R}|^{2}\Big)dxdy\leq K\int_{\{t\}\times\mathcal{S}_{R}}|\partial_{t}v_{R}|^{2}dxdy,\quad\mbox{for all }t\in[0,T],
    \end{align*}
    where we used the boundary condition $\partial_{t}v_{R}\equiv0$ on $\partial\mathcal{S}_{R}$. Similarly, one has
    \begin{align*}
        &\int_{\{t\}\times\mathcal{S}_{R}}|\partial_{t}v_{R}|^{2}dxdy\leq e^{2Kt}\int_{\{0\}\times\mathcal{S}_{R}}|\partial_{t}v_{R}|^{2}dxdy\\
        \leq&e^{2KT}\int_{\mathbb{R}^{2}\times[-1,1]}|\Delta u+g(u,y)|^{2}dxdy=C(K,T,u,g)<\infty,\quad\mbox{for all }t\in[0,T].
    \end{align*}

    \textbf{Step 4: $L^{\infty}(0,T;H^{2}(\mathcal{S}_{R}))$ control of $v_{R}$.} Multiplying both sides of \eqref{eq. Heat approximation problem} by $\Delta v_{R}$ and using $|g(v_{R},y)|\leq K|v_{R}|$ yields that for all $t\in[0,T]$, one has
    \begin{align*}
        \int_{\{t\}\times\mathcal{S}_{R}}(\Delta v_{R})^{2}dxdy=&\,\int_{\{t\}\times\mathcal{S}_{R}}\Delta v_{R}\cdot(\partial_{t}v_{R}-g)dxdy\\
        \leq&\,\int_{\{t\}\times\mathcal{S}_{R}}\frac{(\Delta v_{R})^{2}}{2}dxdy+\int_{\{t\}\times\mathcal{S}_{R}}\Big(|\partial_{t}v_{R}|^{2}+K^{2}|v_{R}|^{2}\Big)dxdy.
    \end{align*}
    This, together with the estimates obtained in Step 3, implies that
    \begin{equation*}
        \int_{\{t\}\times\mathcal{S}_{R}}(\Delta v_{R})^{2}dxdy\leq C(K,T,u,g)<\infty\quad\mbox{for }t\in[0,T].
    \end{equation*}
    Observe that
    \begin{equation}\label{eq. H2 Step 4 boundary 1}
        v_{R}\equiv0\mbox{ and }\Delta v_{R}=\partial_{t}v_{R}+g(v_{R},y)=g(0,y)=0\mbox{ on }\partial\mathcal{S}_{R},
    \end{equation}
    and that the tangential derivatives of $v_{R}$ on $\partial\mathcal{S}_{R}$ are zero. Hence it holds that
    \begin{equation*}
        \partial_{\vec{\bnu}}\frac{|\nabla v_{R}|^{2}}{2}=\langle\nabla_{\bnu}\nabla v_{R},\nabla v_{R}\rangle=\langle\nabla_{\bnu}\nabla v_{R},(\partial_{\bnu}v_{R})\bnu\rangle=\partial_{\bnu}v_{R}\cdot D^{2}v_{R}(\bnu,\bnu)\quad\mbox{on }\partial\mathcal{S}_{R}.
    \end{equation*}
    It follows from \eqref{eq. H2 Step 4 boundary 1} that
    \begin{equation*}
        D^{2}v_{R}(\bnu,\bnu)=\Delta v_{R}+\langle\bH,\nabla v_{R}\rangle-\Delta_{\partial\mathcal{S}_{R}}v=\langle\bH,(\partial_{\bnu}v_{R})\bnu\rangle=-h(x,y)\cdot\partial_{\bnu}v_{R}.
    \end{equation*}
    As $h(x,y)\geq0$ (recall that $\mathcal{S}_{R}$ is convex), we have
    \begin{equation}\label{eq. H2 Step 4 boundary 2}
        \partial_{\bnu}\frac{|\nabla v_{R}|^{2}}{2}=\partial_{\bnu}v_{R}\cdot D^{2}v_{R}(\bnu,\bnu)=-h(x,y)\cdot|\partial_{\bnu}v_{R}|^{2}\leq0.
    \end{equation}
    Using \eqref{eq. H2 Step 4 boundary 1} and \eqref{eq. H2 Step 4 boundary 2} and integrating by parts gives
    \begin{align*}
        \int_{\{t\}\times\mathcal{S}_{R}}|D^{2}v_{R}|^{2}dxdy=&\int_{\{t\}\times\mathcal{S}_{R}}\Big\{\Delta\frac{|\nabla v_{R}|^{2}}{2}-\langle\nabla v_{R},\nabla\Delta v_{R}\rangle\Big\}dxdy\\
        =&\int_{\{t\}\times\mathcal{S}_{R}}(\Delta v_{R})^{2}dxdy+\int_{\{t\}\times\partial\mathcal{S}_{R}}\Big\{\partial_{\vec{\nu}}\frac{|\nabla v_{R}|^{2}}{2}-\Delta v_{R}\cdot\partial_{\vec{\nu}}v_{R}\Big\}d\sigma\\
        \leq&\int_{\{t\}\times\mathcal{S}_{R}}(\Delta v_{R})^{2}dxdy\leq C(K,T,u,g)<\infty.
    \end{align*}

    \textbf{Step 5: Construction of $v$.} Using the uniform bounds obtained above, we can extract a subsequence of $\{v_{R}\}$ and obtain its weak limit $v(t,x,y)$, such that for every $\rho>L$ (recall that $u$ is supported in $[-L,L]\times[-1,1]$) and $\tau\in[0,T]$, one has
    \begin{align*}
        &v_{R_{k}}(\tau,x,y)\to v(\tau,x,y)&\mbox{weakly in }H^{2}(\{\tau\}\times\mathcal{S}_{\rho}),\\
        &\partial_{t}v_{R_{k}}(\tau,x,y)\to\partial_{t}v(\tau,x,y)&\mbox{weakly in }L^{2}(\{\tau\}\times\mathcal{S}_{\rho}).
    \end{align*}
    Clearly, $v(t,x,y)$ is a weak solution to \eqref{eq. Heat Cauchy problem} for $t\in[0,T]$. As the estimates in Steps 3-4 are independent of $\mathcal{S}_{R}$, the weak limit $v(t,x,y)$ must satisfy the same estimates in $\Omega$. Using the Schauder estimate yields that $v(t,x,y)$ is a classical solution to \eqref{eq. Heat Cauchy problem} in $[0,T]\times\Omega$.

    \textbf{Step 6: Uniqueness and stability.} Under the assumption that $\|v\|_{L^{\infty}([0,T]\times\Omega)}\leq M$, it follows from the assumptions on $|g'(v,y)|$ that
    \begin{equation*}
        \big|(\partial_{t}-\Delta)(v-\widetilde{v})\big|\leq C\cdot(M+1)\cdot|v-\widetilde{v}|\quad\mbox{in }[0,T]\times\Omega.
    \end{equation*}
    By the maximum principle, one has
    \begin{equation*}
        \frac{d}{dt}\|v-\widetilde{v}\|_{L^{\infty}(\{t\}\times\Omega)}\leq C(M+1)\|v-\widetilde{v}\|_{L^{\infty}(\{t\}\times\Omega)}.
    \end{equation*}
    Given that $\|u-\widetilde{u}\|_{L^{\infty}(\Omega)}\leq\delta$ for some small $\delta$, it holds that
    \begin{equation}\label{eq. L infty stable}
        \|v-\widetilde{v}\|_{L^{\infty}([0,T]\times\Omega)}\leq\frac{\epsilon}{2}.
    \end{equation}
    Therefore, $|\widetilde{v}|\leq2M$ in $[0,T]\times\Omega$. Since
    \begin{equation*}
        \big|g'(v,y)\big|\leq K':=C\cdot(1+2M)\quad\mbox{for }|v|\leq2M,
    \end{equation*}
    it follows that $w:=v-\widetilde{v}$ satisfies
    \begin{equation}\label{eq. w=error equation}
        |(\partial_{t}-\Delta)w|\leq K'|w|\quad\mbox{in }[0,T]\times\Omega.
    \end{equation}
   Similar to Step 3, multiplying by $w$ on both sides of \eqref{eq. w=error equation} gives
    \begin{equation*}
        \|w\|_{L^{2}(\{t\}\times\Omega)}\leq e^{K'T}\delta,\quad\mbox{for all }t\in[0,T].
    \end{equation*}
    Multiplying by $\partial_{t}w$ on both sides of \eqref{eq. w=error equation} and using the fact $\partial_{t}w=0$ on $\mathbb{R}\times\{\pm1\}$ yields
    \begin{align*}
        &\int_{\{t\}\times\Omega}\Big\{|\partial_{t}w|^{2}+\partial_{t}(\frac{|\nabla w|^{2}}{2})\Big\}dxdy=\int_{\{t\}\times\Omega}\Big\{|\partial_{t}w|^{2}-\partial_{t}w\cdot\Delta w\Big\}dxdy\\
        \leq&\int_{\{t\}\times\Omega}K'|\partial_{t}w|\cdot|w|dxdy\leq\int_{\{t\}\times\Omega}\Big\{|\partial_{t}w|^{2}+(K')^{2}|w|^{2}\Big\}dxdy.
    \end{align*}
    Therefore, for $t\in[0,T]$, one has
    \begin{equation}\label{eq. H1 stable}
    \begin{aligned}
        \int_{\{t\}\times\Omega}|\nabla w|^{2}dxdy\leq&\,\|u-\widetilde{u}\|_{H^{1}(\Omega)}^{2}+2(K')^{2}T\sup_{t\in[0,T]}\|w\|_{L^{2}(\{t\}\times\Omega)}^{2}\\
\leq&\,\Big(1+2(K')^{2}Te^{2KT}\Big)\delta^{2}\leq\frac{\epsilon^{2}}{4}.
\end{aligned}
    \end{equation}
    Therefore, from \eqref{eq. L infty stable} and \eqref{eq. H1 stable}, the $H^{1}(\Omega)\cap L^{\infty}(\Omega)$-stability and uniqueness of \eqref{eq. Heat Cauchy problem} are established.

    \textbf{Step 7: Symmetry of the solution.} If the initial data $u(x,y)\in\mathcal{C}$, it remains to prove that $v_{R}(t,\cdot,\cdot)\in\overline{\mathcal{C}}$ for all $R$. To see this, we first realize that $v_{R}\equiv0$ is a solution of \eqref{eq. Heat approximation problem} with zero initial data. As $u(x,y)\geq0$, we apply the maximum principle to get that $v_{R}\geq0$ everywhere. Next, for each $\lambda\leq0$, define
    \begin{align*}
        T_{\lambda}=&\{(t,x,y):\ t\in[0,T],\ x=\lambda,\ y\in[-1,1]\},\\
        \Sigma_{\lambda}=&\{(t,x,y):\ t\in[0,T],\ x<\lambda,\ y\in[-1,1]\}.
    \end{align*}
    Let $x_{\lambda}:=2\lambda-x$ be the reflection of $x$ about the plane $T_{\lambda}$. Define $v_{R,\lambda}(t,x,y)=v_{R}(t,x_{\lambda},y)$ for each $(t,x,y)\in\Sigma_{\lambda}$ and let
    \begin{equation*}
        w_{R,\lambda}(t,x,y)=v_{R,\lambda}(t,x,y)-v_{R}(t,x,y)=v_{R}(t,2\lambda-x,y)-v_{R}(t,x,y).
    \end{equation*}
    Straightforward computations give
    \begin{equation*}
        (\partial_{t}-\Delta)w_{R,\lambda}(t,x,y)=g(v_{R,\lambda}(t,x,y),y)-g(v_{R}(t,x,y),y)=\frac{\partial g}{\partial u}(\Theta(t,x,y),y)w_{R,\lambda}(t,x,y),
    \end{equation*}
    where $\Theta(t,x,y)$ is a function bounded between $v_{R}(t,x,y)$ and $v_{R,\lambda}(t,x,y)$. As
    \begin{equation*}
        \|v_{R}\|_{L^{\infty}([0,T]\times\mathcal{S}_{R})}\leq C(T,R,u,g)\quad\mbox{for every fixed }T,
    \end{equation*}
    one has
    \begin{equation*}
        (\partial_{t}-\Delta)w_{R,\lambda}(t,x,y)=c_{R,\lambda}(t,x,y)w_{R,\lambda}(t,x,y),
    \end{equation*}
    where $c_{R,\lambda}$ is bounded in every finite time interval. Clearly, $w_{R,\lambda}$ satisfies
    \begin{equation*}
        w_{R,\lambda}(0,x,y)=u(2\lambda-x,y)-u(x,y)\geq0\mbox{ in }\{t=0\}\cap\Sigma_{\lambda},
    \end{equation*}
    and
    \begin{equation*}
        w_{R,\lambda}\equiv0\mbox{ on }T_{\lambda}\cup\{y=\pm1\},\quad w_{R,\lambda}\geq0\mbox{ on }([0,T]\times\partial\mathcal{S}_{R})\cap\Sigma_{\lambda}.
    \end{equation*}
    Applying the maximum principle yields that $w_{R,\lambda}\geq0$ in $\Sigma_{\lambda}$. Passing to the limit gives
    \begin{equation*}
        v(2\lambda-x,y,t)\geq v(x,y,t),\quad\mbox{for every }\lambda\leq0\mbox{ and every }(x,y,t)\in\Sigma_{\lambda}.
    \end{equation*}
    Therefore, we have $v(t,\cdot)\in\overline{\mathcal{C}}$.

    This completes the proof of Lemma~\ref{lem. appendix well-posedness}.
\end{proof}

Next, we consider the so called $\epsilon$-regularity of the heat flow.
\begin{lemma}\label{lem. epsilon regularity}
    Let $0<T\leq1$ and $M\geq1$ be arbitrary. Let $w=w(t,x,y)$ be a nonnegative function, such that $w(0,\cdot)\equiv0$, $w(t,\cdot)\in H^{1}_{0}(\Omega)$ for each $t\in[0,T]$, and
    \begin{equation*}
        \partial_{t}w-\Delta w\leq C_{0}w^{2}+M w+M\quad\mbox{in }[0,T]\times\Omega.
    \end{equation*}
    Then there exists an $\epsilon=\epsilon(C_{0})>0$ independent of $T$ and $M$, such that if
    \begin{equation*}
        \|w\|_{L^{\infty}(0,T,L^{2}(\Omega))}:=\sup_{t\in[0,T]}\|w(t,\cdot)\|_{L^{2}(\Omega)}\leq\epsilon,
    \end{equation*}
    then one has $\displaystyle\sup_{t\in[0,T]}\|w(t,\cdot)\|_{L^{\infty}(\Omega)}\leq M$.
\end{lemma}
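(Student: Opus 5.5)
We prove Lemma~\ref{lem. epsilon regularity} by a De Giorgi--type iteration on the superlevel sets of $w$. The smallness of $\|w\|_{L^\infty_tL^2_x}$ is what lets the quadratic term $C_0w^2$ be absorbed. Two features of the setting help. First, in the channel $\Omega$ every $H^1_0$ function satisfies the Sobolev/Poincaré inequalities of Lemma~\ref{lem. Sobolev in a channel} and Lemma~\ref{lem. Sobolev with measure} with uniform constants. Second, $T\le 1$, so the forcing $Mw+M$ acts only for unit time.

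\textbf{Step 1: truncated energy inequality.} For levels $k\ge 0$, set $w_k=(w-k)_+$, which lies in $H^1_0(\Omega)$ for each $t$ and vanishes at $t=0$. We test the inequality with $w_k$. On the support of $w_k$ we have $w=w_k+k$, so
\[
\frac{d}{dt}\int_\Omega\frac{w_k^2}{2}+\int_\Omega|\nabla w_k|^2\le \int_\Omega\big(C_0(w_k+k)^2+M(w_k+k)+M\big)w_k .
\]
The dangerous term is $C_0\int w_k^3$. By Lemma~\ref{lem. Sobolev with measure} and Gagliardo--Nirenberg in two dimensions,
\[
\int w_k^3\le \|w_k\|_{L^2}\|w_k\|_{L^4}^2\le C\|w_k\|_{L^2}^2\|\nabla w_k\|_{L^2}\le C\epsilon\|w_k\|_{L^2}\|\nabla w_k\|_{L^2},
\]
using $\|w_k\|_{L^2}\le\|w\|_{L^2}\le\epsilon$. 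Young's inequality then absorbs this into $\frac12\|\nabla w_k\|^2$ at the cost of $C\epsilon^2\|w_k\|_{L^2}^2$, where $\epsilon=\epsilon(C_0)$ is chosen small depending only on $C_0$. The remaining lower-order terms are bounded by
\[
(Ck+M)\int w_k^2+(Ck^2+Mk+M)\int w_k .
\]

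\textbf{Step 2: iteration.} Fix the levels $k_j=\frac{M}{2}(2-2^{-j})$, which increase from $\frac M2$ toward $M$, and set
\[
U_j=\sup_{t\le T}\|w_{k_j}\|_{L^2}^2+\int_0^T\|\nabla w_{k_j}\|_{L^2}^2 .
\]
We estimate $\int w_{k_{j+1}}$ and $\int w_{k_{j+1}}^2$ using the Chebyshev bound on the measure of $\{w>k_{j+1}\}$, namely $|\{w_{k_j}>k_{j+1}-k_j\}|\le U_j\,4^{j}/M^2$ up to constants. We also use the parabolic Sobolev embedding $L^\infty_tL^2\cap L^2_tH^1\hookrightarrow L^4_{t,x}$ in two dimensions. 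Together these give the recursion
\[
U_{j+1}\le C\,b^j\,\Lambda(M)\,U_j^{1+\theta},\qquad \theta>0 .
\]
The factor $\Lambda(M)$ must not grow with $M$. This comes from dividing the lower-order terms by powers of the level gap $k_{j+1}-k_j\sim M2^{-j}$, with $M\ge1$ and $T\le1$. Since $U_0\le \epsilon^2(1+\ldots)$ is small, the standard De Giorgi lemma gives $U_j\to0$, and hence $w\le M$.

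\textbf{Main obstacle.} The key difficulty is the uniformity in $M$. The constant $\epsilon$ must depend only on $C_0$, while the forcing $M$ can be large. The plan is to check by scaling in the gap $k_{j+1}-k_j$ that the terms $Mw+M$ contribute factors of the form $M\cdot M^{-1}$, and similarly for $k^2\cdot k^{-2}$. If this bookkeeping fails, a fallback is to first compare $w$ with the solution $\varrho(t)$ of
\[
\varrho'=M\varrho+M,\qquad \varrho(0)=0,
\]
which satisfies $\varrho(t)\le e^M-1$ on $[0,1]$ and is harmless only for a rescaled threshold. So the level choice $k\sim M$ above is the intended route. A second, weaker point is the $L^\infty_t$-in-time control near $t=0$. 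This is handled by the vanishing initial data $w(0)\equiv0$, so that no time cutoff is needed.
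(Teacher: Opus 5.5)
Your framework (truncations $(w-k)_+$, absorbing $C_0\int w_k^3$ via $\sup_t\|w\|_{L^2}\le\epsilon$ and a two-dimensional $L^4$ inequality, levels of size $M$) is the same as the paper's, and Step~1 is fine. The gap is in Step~2, at exactly the point the lemma is about: uniformity in $M$.

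\textbf{$U_0$ is not small uniformly in $M$.} Your claim that $U_0$ is small is not justified. The only a priori bound on the gradient part of $U_0$ comes from the energy inequality at level $k_0=M/2$:
\[
U_0\lesssim M\int_0^T\!\!\int_\Omega w_{k_0}^2+M^2\int_0^T\!\!\int_\Omega w_{k_0}\lesssim M\epsilon^2+M^2\cdot\frac{\epsilon^2}{M}\sim M\epsilon^2 .
\]
Nothing better is available a priori, because the energy quantity intrinsically carries one factor of $M$. Hence a recursion $U_{j+1}\le Cb^j\Lambda(M)U_j^{1+\theta}$ with $\Lambda(M)$ merely bounded, which is all your criterion asks for, cannot be started once $M\gtrsim\epsilon^{-2}$. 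The resulting $\epsilon$ would then depend on $M$.

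\textbf{Your Chebyshev bound gives exactly $\Lambda\sim1$.} You propose $|\{w>k_{j+1}\}|\lesssim 4^jU_j/M^2$. Combined with H\"older and the parabolic $L^4$ bound, this yields $M\iint w_{k_{j+1}}^2\lesssim 2^jU_j^{3/2}$. Writing $w_{k_{j+1}}\le w_{k_j}^2/(k_{j+1}-k_j)$, it also yields $M^2\iint w_{k_{j+1}}\lesssim 4^jU_j^{3/2}$. Estimating this last term directly by H\"older is worse: it produces a growing factor $M^{1/2}$. Your fallback via $\varrho'=M\varrho+M$ is, as you note, useless.

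\textbf{How to repair it within your scheme.} What is missing is a compensating negative power of $M$ in the recursion. Measure the level set with the $L^4_{t,x}$ norm instead:
\[
|\{w>k_{j+1}\}|\le(k_{j+1}-k_j)^{-4}\|w_{k_j}\|_{L^4_{t,x}}^4\lesssim 16^jU_j^2/M^4 .
\]
This makes both lower-order contributions $\lesssim 8^jU_j^2/M$. Then $V_j:=U_j/M$ satisfies $V_{j+1}\lesssim 8^jV_j^2$ with $V_0\lesssim\epsilon^2$, uniformly in $M\ge1$.

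\textbf{How the paper avoids the issue.} It iterates the $L^2_{t,x}$ quantity $A_k=\int_0^T\!\int_\Omega w_k^2$ along levels $M_k=(1-2^{-k})M$ starting at $0$, so $A_0\le T\epsilon^2$ is genuinely small. Combining the truncated energy bound with Lemma~\ref{lem. Sobolev with measure} gives $A_k\le C\mathcal{R}_k|\mathcal{D}_k|^{1/2}$, with $\mathcal{R}_k\lesssim MA_k+M^2\sqrt{A_k}\,|\mathcal{D}_k|^{1/2}$ and $|\mathcal{D}_k|\le 4^kA_{k-1}/M^2$. Every power of $M$ then cancels exactly, leaving $\sqrt{A_k}\le C4^kA_{k-1}$.
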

\begin{proof}
    Let us denote
    \begin{equation*}
        w_{k}=(w-M_{k})_{+},\quad\mbox{where }M_{k}=(1-2^{-k})M.
    \end{equation*}
    For such a sequence $\{w_{k}\}$, denote
    \begin{equation*}
        A_{k}=\int_{0}^{T}\int_{\Omega}|w_{k}|^{2}dxdydt,\quad\mathcal{D}_{k}:=supp(w_{k})\subseteq[0,T]\times\Omega,
    \end{equation*}
    and
    \begin{equation*}
        \chi_{k}(t,x,y)=\chi_{\mathcal{D}_{k}}=\left\{\begin{aligned}
            &1,&\mbox{if }&(t,x,y)\in\mathcal{D}_{k},\\
            &0,&\mbox{if }&(t,x,y)\notin\mathcal{D}_{k}.
        \end{aligned}\right.
    \end{equation*}
    Obviously, $A_{k}\leq A_{k-1}$. Moreover, it is easy to verify that $w_{k}$ satisfies
    \begin{equation}\label{eq. truncated heat equation}
        \partial_{t}w_{k}-\Delta w_{k}\leq C_{0}w_{k}^{2}+\alpha_{k}w_{k}+\beta_{k},\quad\mbox{where }\alpha_{k}=7M\chi_{k}\mbox{ and }\beta_{k}=5M^{2}\chi_{k}.
    \end{equation}
    Multiplying both sides of \eqref{eq. truncated heat equation} by $w_{k}$ and integrating by parts yields
    \begin{align*}
        &\int_{\Omega}|w_{k}(t,\cdot)|^{2}dxdy+\int_{0}^{t}\int_{\Omega}|\nabla w_{k}|^{2}(s,\cdot)dxdyds\\
        \leq&\int_{0}^{t}\int_{\Omega}\Big(C_{0}w_{k}^{3}(s,\cdot)+\alpha_{k}w_{k}^{2}(s,\cdot)+\beta_{k}w_{k}(s,\cdot)\Big)dxdyds,\quad\mbox{for all }t\in[0,T],
    \end{align*}
    where the boundary terms vanish as $w(t,\cdot)\in H^{1}_{0}(\Omega)$ and $w(0,\cdot)\equiv0$.
    
    In each time slice, we apply Lemma~\ref{lem. Sobolev in a channel} with $p=4$ to get
    \begin{align*}
        \int_{\Omega}C_{0}w_{k}^{3}(t,\cdot)dxdy\leq&C_{0}\Big(\int_{\Omega}w_{k}^{2}(t,\cdot)dxdy\Big)^{1/2}\Big(\int_{\Omega}w_{k}^{4}(t,\cdot)dxdy\Big)^{1/2}\\
        \leq&C_{0}\epsilon\cdot\mathfrak{C}(4)^{2}\int_{\Omega}|\nabla w_{k}(t,\cdot)|^{2}dxdy\leq\frac{1}{2}\int_{\Omega}|\nabla w_{k}(t,\cdot)|^{2}dxdy,
    \end{align*}
    where the smallness of $\epsilon$ has been used in the last step. Then, for all $t\in [0,T]$, one has
    \begin{equation}\label{eqn: De Giohi bound L2}
        \int_{\Omega}|w_{k}|^{2}(t,\cdot)dxdy\leq\int_{0}^{t}\int_{\Omega}\Big(\alpha_{k}w_{k}^{2}(s,\cdot)+\beta_{k}w_{k}(s,\cdot)\Big)dxdyds,
    \end{equation}
    and
    \begin{equation}\label{eq. De Giorgi bound of H1}
        \int_{0}^{T}\int_{\Omega}|\nabla w_{k}|^{2}(t,\cdot)dxdydt\leq2\int_{0}^{T}\int_{\Omega}\Big(\alpha_{k}w_{k}^{2}(t,\cdot)+\beta_{k}w_{k}(t,\cdot)\Big)dxdydt.
    \end{equation}
    
    With the help of Lemma~\ref{lem. Sobolev with measure}, it follows from \eqref{eq. De Giorgi bound of H1} that
    \begin{equation*}
        C\cdot\mathcal{R}\geq\int_{0}^{T}\frac{\int_{\Omega}|w_{k}|^{2}dxdy}{\int_{\Omega}\chi_{k}dxdy}dt,\quad\mbox{where }\mathcal{R}:=\int_{0}^{T}\int_{\Omega}\Big(\alpha_{k}w_{k}^{2}(t,\cdot)+\beta_{k}w_{k}(t,\cdot)\Big)dxdydt.
    \end{equation*}
    Hence, applying the Cauchy-Schwarz inequality to the definition of $A_k$ yields
    \begin{equation}\label{eq:Ak1_B}
    \begin{aligned}
        A_{k}\leq&\Big\{\Big(\sup_{t\in[0,T]}\int_{\Omega}|w_{k}|^{2}dxdy\Big)\cdot\Big(\int_{0}^{T}\frac{\int_{\Omega}|w_{k}|^{2}dxdy}{\int_{\Omega}\chi_{k}dxdy}dt\Big)\Big\}^{1/2}\cdot|\mathcal{D}_{k}|^{1/2}\\
        \leq&\Big\{\mathcal{R}\cdot C\mathcal{R}\Big\}^{1/2}\cdot|\mathcal{D}_{k}|^{1/2}\leq C\mathcal{R}|\mathcal{D}_{k}|^{1/2}.
        \end{aligned}
    \end{equation}
    Moreover, by the Cauchy-Schwarz inequality, one has
    \begin{equation}\label{est_R_B}
        \mathcal{R}\leq\sup|\alpha_{k}|\cdot A_{k}+\sqrt{A_{k}}\cdot\Big(\int_{0}^{T}\int_{\Omega}\beta_{k}^{2}dxdydt\Big)^{1/2}\leq7M\cdot A_{k}+5M^{2}\sqrt{A_{k}}|\mathcal{D}_{k}|^{1/2}.
    \end{equation}
    
    Since $w_{k-1}\geq2^{-k}M$ in $\mathcal{D}_{k}$, it follows that
    \begin{equation}\label{est:Dk_B}
    \displaystyle|\mathcal{D}_{k}|\leq\frac{4^{k}}{M^{2}}A_{k-1}.\end{equation}
    Then combining \eqref{eq:Ak1_B}-\eqref{est:Dk_B} yields
    \begin{align*}
    A_{k}\leq&C\mathcal{R}|\mathcal{D}_{k}|^{1/2}\leq C(7M\cdot A_{k}+5M^{2}\sqrt{A_{k}}|\mathcal{D}_{k}|^{1/2})\cdot|\mathcal{D}_{k}|^{1/2}\\
        \leq&C\sqrt{A_{k}}(2^{k}\sqrt{A_{k}A_{k-1}}+4^{k}A_{k-1}).
    \end{align*}
    Using the fact that $A_{k}\leq A_{k-1}$, we have for $k\geq2$:
    \begin{equation*}
        \sqrt{A_{k}}\leq C(2^{k}\sqrt{A_{k}A_{k-1}}+4^{k}A_{k-1})\leq C\cdot4^{k}A_{k-1}.
    \end{equation*}
    Choose $\epsilon\leq \frac{1}{256C^2}$ Since $T\leq1$, the inequality $\sqrt{A_{k}}\leq C\cdot4^{k}A_{k-1}$  implies $A_{k}\leq \epsilon 4^{-2k}$. In particular,  we have $\displaystyle\lim_{k\rightarrow \infty} A_{k}=0$. As a result, $w\leq M$ almost everywhere in $[0,T]\times\Omega$.

    This completes the proof of Lemma~\ref{lem. epsilon regularity}.
\end{proof}

\medskip 

 {\bf Acknowledgements.}
The research of Huang is partially supported by EPSRC Horizon Europe Guarantee EP/X020886/1. The research of Xie is partially supported by National Key R\&D Program of China 2024YFA1013302, NSFC grants  12426203 and 12571238.

\end{document}